\let\orgdescriptionlabel\descriptionlabel
\renewcommand*{\descriptionlabel}[1]{%
  \let\orglabel\label
  \let\label\@gobble
  \phantomsection
  \edef\@currentlabel{#1}%
  \let\label\orglabel
  \orgdescriptionlabel{#1}%
}
\newtheorem{theorem}{Theorem}[chapter]
\newtheorem{corollary}[theorem]{Corollary}
\newtheorem{conjecture}[theorem]{Conjecture}
\newtheorem{proposition}[theorem]{Proposition}
\newtheorem{lemma}[theorem]{Lemma}
\theoremstyle{definition}
\newtheorem{definition}[theorem]{Definition}
\newtheorem{example}[theorem]{Example}
\theoremstyle{remark}
\newtheorem{remark}[theorem]{Remark}
\newcommand*{\grad}{\textup{grad }}
\newcommand*{\dive}{\textup{div }}
\newcommand*{\dete}{\textup{det }}
\newcommand*{\dif}[1]{\frac{\partial}{\partial x_{#1}} \bigg\rvert_x}
\newcommand*{\diff}[1]{\frac{\partial f}{\partial x_{#1}}}
\title{Combinatorial, Bakry-\'{E}mery, Ollivier's Ricci curvature notions and their motivation from Riemannian geometry}
\author{Supanat Kamtue}
\date{March, 2018}
\begin{document}

\begin{titlepage}
    \begin{center}
        \vspace*{1cm}
        
        \Huge
        \textbf{Combinatorial, Bakry-\'{E}mery, Ollivier's Ricci curvature notions and their motivation from Riemannian geometry}
        
        \vspace{0.5cm}
        \LARGE
        {}
        
        \vspace{1.5cm}
        
        \textbf{Supanat Kamtue}
        
        \vfill
        
%A dissertation submitted in partial fulfilment of the requirements for admission to the degree of MSc in Mathematical Sciences at Durham University
        
        %\vspace{0.8cm}

        \Large
        Department of Mathematical Sciences\\
        Durham University, UK\\
        %submitted: September, 2017\\
        %revisited: March, 2018
    \end{center}
\end{titlepage}
\newpage
% Add the title section.
%\maketitle

\begin{abstract}
In this survey, we study three different notions of curvature that are defined on graphs, namely, combinatorial curvature, Bakry-\'Emery curvature, and Ollivier's Ricci curvature. For each curvature notion, the definition and its motivation from Riemannian geometry will be explained.  Moreover, we bring together some global results and geometric concepts in Riemannian geometry that are related to curvature (e.g. Bonnet-Myers theorem, Laplacian operator, Lichnerowicz theorem, Cheeger constant), and then compare them to the discrete analogues in some (if not all) of the discrete curvature notions. The structure of this survey is as follows: the first chapter is dedicated to relevant background in Riemannian geometry. Each following chapter is focussing on one of the discrete curvature notions. This survay is an MSc dissertation in Mathematical Sciences at Durham University.

\end{abstract}

\tableofcontents

\chapter{Background in Riemannian Geometry}
In this chapter, we provide substantial background material from Riemannian geometry, which will prepare the readers to be able to compare to discrete analogues in graphs in later chapters. First in Section \ref{GBCC}, we introduce the Gauss-Bonnet theorem, Cartan-Hadamard theorem, and Cheeger constant, which are three examples of global concepts of manifolds that can also be illustrated as geometric features in graphs as we will see in combinatorial curvature in Chapter \ref{CCchap}. Next in Section \ref{LBsec}, we consider linear operators on manifolds including gradient, divergence, Laplacian, and Hessian. They are ingredients in Bochner's formula, which is the main motivation for Bakry-\'{E}mery curvature in Chapter \ref{BECchap}. In Section \ref{eigenlap}, the crucial operator, Laplacian, and its smallest eigenvalue have been investigated in Lichnerowicz Theorem. In Section \ref{BMsec}, we state and prove the theorem of Bonnet-Myers. In Section \ref{avgsec}, we explain the problem of finding average distance of two balls, which motivates Ollivier's Ricci curvature in Chapter \ref{ORCchap}. Lastly in Section \ref{expsec}, we give examples of manifolds and their representing graphs, and then discuss about their curvature in different notions.

\section{Gauss-Bonnet, Cartan-Hadamard, and \\ Cheeger constant} \label{GBCC}

The purpose of this section is to present theorems about curvature in Riemannian geometry, which will be compared to the discrete analogues in combinatorial curvature in Chapter \ref{CCchap}. The content of this section is divided into two parts. In the first half, we introduce (without proof) \textup{Gauss-Bonnet theorem} and \textup{Cartan-Hadamard theorem}. In the second half, we give the definition of \textup{Cheeger isoperimetric constant} (or in short, \textup{Cheeger constant}), and give the statements and sketches of proof for another two theorems that are related to Cheeger constant.

\subsection*{Gauss-Bonnet and Cartan-Hadamard}
Gauss-Bonnet theorem states that, for any closed surface (i.e. a compact two-dimensional manifold without boundary), its total curvature is equal to its Euler's characteristic multiplied by $2\pi$. A proof of this theorem can be found in e.g. \cite[pp.~274--276]{Carmo2}.
\begin{theorem}[Gauss-Bonnet]
Let $M$ be a closed surface. Then its total curvature is
$$\int\limits_{M} K \textup{d}A = 2\pi \chi(M)$$ 
\noindent where $K$ is Gaussian curvature, and $\textup{d}A$ is the area element, and $\chi(M)$ is the Euler's characteristic of $M$.
\end{theorem}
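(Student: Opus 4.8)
The plan is to localise the global statement by cutting $M$ into small pieces and summing a local version of the identity over them. Since $M$ is compact, I would first fix a smooth triangulation of $M$ into geodesic triangles, taken fine enough that each triangle lies inside a single geodesically convex coordinate patch. Let $V$, $E$, and $F$ denote the numbers of vertices, edges, and faces of this triangulation, so that by definition $\chi(M) = V - E + F$.

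The heart of the argument is the \emph{local} Gauss-Bonnet theorem: for a region $R$ bounded by a piecewise smooth simple closed curve one has
$$\int_R K \, \textup{d}A + \int_{\partial R} k_g \, \textup{d}s + \sum_i \theta_i = 2\pi,$$
where $k_g$ is the geodesic curvature of the boundary and the $\theta_i$ are the exterior angles at the corners. Applying this to a geodesic triangle $T$ with interior angles $\alpha_1, \alpha_2, \alpha_3$, the sides are geodesics, so $k_g \equiv 0$, and the exterior angles are $\pi - \alpha_j$; the identity then collapses to the angle-excess formula
$$\int_T K \, \textup{d}A = (\alpha_1 + \alpha_2 + \alpha_3) - \pi.$$

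I would then sum this over all $F$ triangles. The curvature integrals add up to $\int_M K \, \textup{d}A$ by additivity over the triangulated surface. For the angle terms, the key observation is that at every vertex the interior angles of the triangles meeting there sweep out a full turn and so sum to $2\pi$; hence the grand total of all interior angles is exactly $2\pi V$, while the $F$ copies of $-\pi$ contribute $-\pi F$. This yields $\int_M K \, \textup{d}A = 2\pi V - \pi F$. To close the argument combinatorially, note that each triangle has three edges and each edge is shared by exactly two triangles, giving $3F = 2E$, so that
$$2\pi\chi(M) = 2\pi V - 2\pi E + 2\pi F = 2\pi V - 3\pi F + 2\pi F = 2\pi V - \pi F,$$
which matches the expression above and completes the proof.

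The genuinely nontrivial input is the local Gauss-Bonnet identity, and in particular pinning down the constant $2\pi$ on its right-hand side; this is where I expect the main obstacle to lie. It rests on the \emph{theorem on turning tangents} (Hopf's Umlaufsatz), asserting that the total rotation of the tangent vector around a simple closed curve equals $2\pi$, together with an application of Green's theorem in a coordinate chart that converts the holonomy of a parallel orthonormal frame around $\partial R$ into $\int_R K \, \textup{d}A$. Once that analytic step is secured, the triangulation, the angle-excess formula, and the Euler-characteristic count are essentially bookkeeping.
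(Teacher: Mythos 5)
Your argument is correct: it is the classical triangulation proof --- local Gauss--Bonnet reduced to the angle-excess formula for geodesic triangles, summed against the combinatorics $V-E+F$ and $3F=2E$ --- and it is essentially the proof the paper itself defers to, since the paper states the theorem without proof and cites do Carmo (pp.~274--276), where exactly this argument appears. The two nontrivial inputs you correctly isolate (Hopf's Umlaufsatz feeding into the local formula, and the existence of a sufficiently fine geodesic triangulation) are the only places requiring real work, and you have identified them accurately.
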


Euler's characteristic is a global topological invariant of a surface. In particular, if $M$ is orientable then $\chi(M)=2-2g$, where $g$ is a genus of $M$.  For example, a two-dimensional sphere of radius $r$ has Gaussian curvature equal to $r^{-2}$ everywhere, and its surface area is $4\pi r^2$. Hence the total curvature is equal to $\int\limits_{S^2} K \textup{d}A = 4\pi = 2\pi\chi(S^2)$. 

While Gauss-Bonnet theorem mentions the total curvature of manifolds, many other theorems (e.g. Bonnet-Myers, and Lichnerowicz) refers to properties of manifolds that have the same sign of curvature everywhere. Among those theorems, Cartan-Hadamard theorem gives an implication when a manifold has non-positive sectional curvature everywhere. The statement of the theorem is given as follows, and a proof of the theorem can be found in \cite[pp.~149--151]{Carmo}.

\begin{theorem}[Cartan-Hadamard] \label{CH}
Let $M^n$ be a complete and simply connected Riemannian manifold (of dimension $n$) with sectional curvature $K_x(\alpha)\le 0$ for all $x\in M$ and for all two-dimensional plane $\alpha\subset T_xM$. Then $M$ is diffeomorphic to $\mathbb{R}^n$, and the exponential map $\textup{exp}_x:T_xM\rightarrow M$ is diffeomorphism.
\end{theorem}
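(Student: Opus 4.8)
The plan is to construct the required diffeomorphism directly from the exponential map, using completeness to govern its global behaviour and the curvature hypothesis to govern its derivative. First I would invoke the Hopf--Rinow theorem: since $M$ is complete, every geodesic through $x$ extends for all time, so $\exp_x$ is defined on the whole of $T_xM$ and is surjective. It then suffices to show that $\exp_x$ is a bijective local diffeomorphism, and the central claim, equivalent to $\exp_x$ being a local diffeomorphism, is that $M$ has no point conjugate to $x$ along any geodesic issuing from $x$.

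The heart of the argument---and the only place the sign of the curvature enters---is this absence of conjugate points, which I would establish with Jacobi fields. Fix a geodesic $\gamma(t)=\exp_x(tv)$ and a nontrivial Jacobi field $J$ along $\gamma$ with $J(0)=0$, satisfying $J''+R(J,\gamma')\gamma'=0$. Writing $f(t)=\langle J(t),J(t)\rangle$ and differentiating twice yields
$$f''(t)=2\langle J',J'\rangle-2\langle R(J,\gamma')\gamma',J\rangle .$$
Since $\langle R(J,\gamma')\gamma',J\rangle$ equals the sectional curvature of the plane spanned by $J$ and $\gamma'$ times a nonnegative normalising factor, the hypothesis $K_x(\alpha)\le 0$ makes this term nonpositive, so $f''\ge 2|J'|^2\ge 0$ and $f$ is convex. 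As $J(0)=0$ with $J\not\equiv 0$ forces $J'(0)\neq 0$, we have $f(0)=0$, $f'(0)=0$ and $f''(0)>0$, so $f(t)>0$ for all $t>0$; hence $J$ never vanishes again, there are no conjugate points, and $d(\exp_x)_{tv}$ is everywhere nonsingular.

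To upgrade this local diffeomorphism to a global one I would pull back the metric of $M$ along $\exp_x$, making $T_xM$ into a Riemannian manifold on which $\exp_x$ is a local isometry. Because the rays through the origin are carried to geodesics, this pullback metric is geodesically complete, and a local isometry out of a complete manifold is necessarily a covering map. Finally, since $M$ is simply connected while $T_xM$ is connected, the covering has a single sheet, so $\exp_x$ is a diffeomorphism; composing with any linear isomorphism $T_xM\cong\mathbb{R}^n$ gives $M$ diffeomorphic to $\mathbb{R}^n$.

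The main obstacle I anticipate lies not in the curvature estimate, which is short once the Jacobi equation is in hand, but in the assertion that a local isometry from a complete manifold is a covering map. Verifying the evenly-covered condition requires a careful path-lifting argument that uses the full force of completeness, and this is the step demanding the most care; the convexity bound on $f$ is by comparison the decisive but technically brief ingredient that converts the sign of the curvature into the stated topological conclusion.
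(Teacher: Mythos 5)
The paper does not prove this theorem; it only states it and refers the reader to do Carmo \cite[pp.~149--151]{Carmo} for the argument. Your proposal is correct and is essentially that standard proof: no conjugate points via convexity of $|J|^2$ for Jacobi fields vanishing at $0$, then pulling back the metric so that $\exp_x$ becomes a local isometry from a complete manifold, hence a covering map, which is trivial by simple connectedness. Nothing further is needed beyond the care you already flag for the covering-map lemma.
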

In words, the theorem implies the ``infiniteness'' of such manifold, in the sense that every geodesic (starting from any point and going in any direction) can be extended infinitely. 

\subsection*{Cheeger constant}
In \cite{Cheeger}, J. Cheeger introduced a constant $h$ of a manifold, representing an ``isoperimetric ratio'', and then proved an inequality that related this constant $h$ to $\lambda_1$, the smallest nonzero eigenvalue of Laplacian (see Section \ref{eigenlap}). The constant and the inequality were named after him as the Cheeger constant and Cheeger's inequality. 

\begin{definition}[The Cheeger constant]
The \textit{Cheeger constant} of a compact manifold $(M^n,g)$ is defined to be
$$h(M):=\inf\limits_{H} \frac{\textup{vol}_{n-1}(\partial H)}{\textup{vol}_n(H)}$$
where the infimum is taken over all compact submanifolds $H\subset M$ (of the same dimension) such that $0<\textup{vol}_n(H)\le \frac{1}{2}\textup{vol}_n(M)$, and $\partial H$ denotes the smooth boundary of $H$.

Moreover, in case $M$ is non-compact manifold, the \textit{Cheeger constant} (cf. Chavel, \cite[pp.~95]{Chavel}) is defined almost in the same way, except that the condition $\textup{vol}_n(H)\le \frac{1}{2}\textup{vol}_n(M)$ is removed. 

\end{definition}

For an advance notice, the two following theorems and their proofs involve Laplacian operator (whose definition and details can be found in Section \ref{LBsec} and \ref{eigenlap}). Some of formulas are not explained in this paper, but will be referred to \cite{Chavel,GHL,KarpP,Kasue}. 

\begin{theorem}[Cheeger's Inequality] 
Let $(M^n,g)$ be a compact Riemannian manifold. Then $$\lambda_1 \ge \frac{h(M)^2}{4},$$
where $\lambda_1$ is the first nonzero eigenvalue of Laplacian on $M$.
\end{theorem}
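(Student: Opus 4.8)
The plan is to prove Cheeger's inequality via the standard variational/test-function argument, establishing the contrapositive flavour: any function with small Rayleigh quotient forces a level set with small isoperimetric ratio, so $\lambda_1$ cannot be too small relative to $h(M)^2/4$. Let $f$ be an eigenfunction of the Laplacian associated to $\lambda_1$, so that $\Delta f = -\lambda_1 f$ (with our sign convention) and, since $\lambda_1$ is the \emph{first nonzero} eigenvalue, $f$ is orthogonal to the constants, i.e. $\int_M f \,\textup{d}V = 0$. The variational characterization gives
$$\lambda_1 = \frac{\int_M |\grad f|^2 \,\textup{d}V}{\int_M f^2 \,\textup{d}V}.$$
Because $f$ has mean zero, it changes sign; the trick is to pass to a one-sided function. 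I would set $M_+ = \{f > 0\}$, assume after possibly replacing $f$ by $-f$ that $\textup{vol}_n(M_+) \le \tfrac12 \textup{vol}_n(M)$, and work with $g := f_+ = \max(f,0)$ supported on $M_+$. A short computation shows $g$ still satisfies a Rayleigh bound $\int_M |\grad g|^2 \le \lambda_1 \int_M g^2$, so it suffices to prove the Cheeger bound for nonnegative functions supported on a set of volume at most half of $M$'s.

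The heart of the argument is the \emph{co-area inequality}. First I would apply Cauchy--Schwarz to reduce to controlling the $L^1$ quantity: writing
$$\int_M |\grad (g^2)| \,\textup{d}V = 2\int_M g\,|\grad g| \,\textup{d}V \le 2\left(\int_M g^2\right)^{1/2}\left(\int_M |\grad g|^2\right)^{1/2},$$
so that
$$\int_M |\grad(g^2)|\,\textup{d}V \le 2\left(\int_M g^2\right)^{1/2}(\lambda_1)^{1/2}\left(\int_M g^2\right)^{1/2} = 2\sqrt{\lambda_1}\int_M g^2\,\textup{d}V.$$
The remaining task is to bound the left-hand side from below by $h(M)\int_M g^2$. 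Here the co-area formula expresses $\int_M |\grad(g^2)|\,\textup{d}V = \int_0^\infty \textup{vol}_{n-1}(\partial\{g^2 > t\})\,\textup{d}t$, while $\int_M g^2\,\textup{d}V = \int_0^\infty \textup{vol}_n(\{g^2 > t\})\,\textup{d}t$. Since each superlevel set $\{g^2 > t\}$ is contained in $M_+$ and thus has volume at most $\tfrac12\textup{vol}_n(M)$, the definition of the Cheeger constant gives $\textup{vol}_{n-1}(\partial\{g^2>t\}) \ge h(M)\,\textup{vol}_n(\{g^2>t\})$ for almost every $t$. Integrating this pointwise-in-$t$ inequality yields $\int_M |\grad(g^2)| \ge h(M)\int_M g^2$.

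Combining the two displays gives $h(M)\int_M g^2 \le 2\sqrt{\lambda_1}\int_M g^2$, hence $h(M) \le 2\sqrt{\lambda_1}$, which rearranges to $\lambda_1 \ge h(M)^2/4$ as claimed. The main obstacle, and the step I would treat most carefully, is the justification of the co-area formula together with the measure-theoretic regularity of the level sets $\partial\{g^2 > t\}$: by Sard's theorem almost every $t$ is a regular value, so the boundaries are smooth hypersurfaces to which the Cheeger constant definition legitimately applies, but one must also handle the differentiability of $g = f_+$ across the nodal set $\{f = 0\}$ and confirm the Rayleigh bound survives the truncation. These analytic technicalities (and the precise invocation of the co-area formula, for which I would cite \cite{Chavel}) are where the real work lies; the algebraic combination at the end is routine.
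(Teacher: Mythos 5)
Your proposal is correct and follows essentially the same route as the paper's proof: restrict to the positive part of the eigenfunction on $M_+$ (with $\textup{vol}(M_+)\le\tfrac12\textup{vol}(M)$), establish the Rayleigh bound there by integration by parts, apply Cauchy--Schwarz to $\int g|\grad g|$, and finish with the co-area formula applied to $g^2$ together with the definition of $h(M)$ on the superlevel sets. The only cosmetic difference is that you phrase the first step as a truncation $g=f_+$ with an inequality, where the paper integrates by parts directly on $M_+$ to get the same identity with equality.
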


The proof we provide here follows from Gallot-Hulin-Lafontaine's \cite[pp.~238--240]{GHL} which proves in case $M$ is compact. Alternatively, Chavel provides a similar proof in \cite[pp.~95]{Chavel}, and the key argument is the co-area formula (see \cite[pp.~239]{GHL}, or \cite[pp.~85]{Chavel}). 
\begin{proof}
Suppose that $M$ is compact. Let $f$ be an eigenfunction corresponding to $\lambda_1$: $\Delta f+\lambda_1 f = 0$, and partition $M$ into three sets:
\begin{align*}
M_+ &= \{x\in M: f(x)>0\} \\
M_0 \ &= \{x\in M: f(x)=0\}\ \\
M_- &= \{x\in M: f(x)<0\}.
\end{align*}
Assume that $0$ is a regular value of $f$, that is, the preimage $M_0=f^{-1}(0)$ is a $(n-1)$-dimensional submanifold of $M$ (otherwise we can work with a function $f+\epsilon$ for arbitrary small $\epsilon$). Further assume $\textup{vol}(M_+)\le \frac{1}{2} \textup{vol}(M)$ (otherwise we can work with a function $-f$).

Performing integration by parts (in other words, integrating the Product rule \ref{PR} and then applying Divergence theorem), for any vector field $X$, we have
$$\int_{M_+} \langle \textup{grad } f, X \rangle + \int_{M_+}f\textup{div } X= \int_{M_+} \textup{div}(fX)=\int_{\partial M_+} \langle fX,\mathbf{\hat{n}}\rangle d\textup{vol}(\partial M_+)=0$$

because $f$ vanishes on the boundary of $M_+$ (which is $M_0$). In particular, choose $X=\textup{grad } f$, then the above equation can be read as
$$\int_{M_+} |\textup{grad } f|^2  = - \int_{M_+} f \Delta f = \lambda_1\int_{M_+} f^2.$$
Apply Cauchy-Schwarz's inequality and use that  $f|\textup{grad } f|=\frac{1}{2}|\textup{grad } f^2|,$
$$\lambda_1= \frac{\int_{M_+} |\textup{grad } f|^2}{\int_{M_+} f^2} \stackrel{C.S.}{\ge} 
\frac{(\int_{M_+} f|\textup{grad } f|)^2}{(\int_{M_+} f^2)^2}
= \frac{1}{4}\frac{(\int_{M_+} |\textup{grad } f^2|)^2}{(\int_{M_+} f^2)^2}.$$
The rest is to prove that $\int_{M_+} |\textup{grad } f^2|\ge h(M)\int_{M_+} f^2$. 

The co-area formula applied to the positive function $f^2$ gives
\begin{align*}
\int_{M_+} f^2&=\int_0^{\infty} \textup{vol}_{n}f^{-1}([\sqrt{t},\infty))dt=\int_0^{\infty} \textup{vol}_{n}( H_t)dt\\
\int_{M_+} |\textup{grad } f^2|&=\int_0^{\infty} \textup{vol}_{n-1}f^{-1}(\sqrt{t})dt=\int_0^{\infty} \textup{vol}_{n-1}(\partial H_t)dt
\end{align*}

where $H_t:=f^{-1}([\sqrt{t},\infty))$ is a submanifold of $M$ (or an empty set), with a smooth (or empty) boundary $\partial H_t=f^{-1}(\sqrt{t})$ for almost every $t$ (as long as $\sqrt{t}$ is regular value of $f$).

Moreover, 
$\textup{vol}(H_t)\le \textup{vol}(H_0)=\textup{vol}(M_+)\le \frac{1}{2}\textup{vol}(M)$, so by the definition of the Cheeger constant: 
$$\textup{vol}_{n-1}(\partial H_t)\ge h(M)\textup{vol}_n(H_t)$$ holds for almost every $t\ge 0$. Integration over $t\in[0,\infty)$ finally yields
$$\int_{M_+} |\textup{grad } f^2|=\int_0^{\infty} \textup{vol}_{n}( H_t)dt\ge h(M)\int_0^{\infty} \textup{vol}_{n-1}(\partial H_t)dt=h(M)\int_{M_+} f^2.$$

%The co-area formula states that, for a smooth positive function $F$ on compact $M_+$: $$\int_{M_+} |\textup{grad } F|=\int_0^{\textup{sup }F} \textup{vol}_{n-1}\big(F^{-1}(t) \big)dt,$$
%and $$\int_{M_+} F=\int_0^{\textup{sup }F}  \textup{vol}_{n}F^{-1}\big([t,+\infty] \big) dt.$$
%
%Applying the co-area formula with $F=f^2$, we then obtain
%\begin{align*}
%\int_{M_+} |\textup{grad } f^2| &= \int_0^{\textup{sup }f} \textup{vol}_{n-1}f^{-1}(\sqrt t)dt\\
%&\ge \int_0^{\textup{sup }f} h(M) \textup{vol}_{n}f^{-1}\big([\sqrt t, \textup{sup }f] \big)\\
%\end{align*}

\end{proof}
Next theorem asserts that Cheeger constant is strictly positive for a manifold whose curvature is negative and bounded away from zero. The discrete analogue of this theorem can be found in Theorem \ref{comcheegernegthm}.

\begin{theorem} \label{cheegernegthm}
Suppose that a complete manifold $M$ has negative sectional curvature bounded above by $-K_0<0$ (hence $M$ is non-compact, by Cartan-Hadamard). Then 
\begin{equation} \label{cheegerneg}
h(M)\ge (\textup{dim }M-1)\sqrt{K_0}
\end{equation}
where $h(M)$ is the Cheeger constant defined for non-compact $M$.
\end{theorem}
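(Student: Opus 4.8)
The plan is to reduce the isoperimetric inequality to a pointwise lower bound on the Laplacian of a distance function, which is then converted into a volume comparison by the divergence theorem. Fix a point $p\in M$ and let $r(x)=d(p,x)$, and write $n=\dim M$. Since $M$ is complete, simply connected and negatively curved, Cartan-Hadamard (Theorem \ref{CH}) guarantees that $r$ is smooth on $M\setminus\{p\}$ with no cut locus and $|\textup{grad }r|\equiv 1$. The essential geometric input is the Laplacian (Hessian) comparison theorem: because the sectional curvature is bounded above by $-K_0$, comparison with the model space of constant curvature $-K_0$ gives the pointwise bound
$$\Delta r(x)\ \ge\ (n-1)\sqrt{K_0}\coth\!\left(\sqrt{K_0}\,r(x)\right)\ \ge\ (n-1)\sqrt{K_0},$$
where the last step uses $\coth t\ge 1$ for $t>0$. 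Geometrically, $\Delta r$ is the mean curvature of the geodesic sphere through $x$, and the inequality says these spheres are at least as convex as horospheres in hyperbolic space.

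Given this, the estimate on the Cheeger constant is almost immediate. Let $H\subset M$ be any compact submanifold with smooth boundary and $\textup{vol}_n(H)>0$ (the admissible class for the non-compact Cheeger constant, where the constraint $\textup{vol}_n(H)\le\frac12\textup{vol}_n(M)$ has been dropped). Applying the divergence theorem to $X=\textup{grad }r$ yields
$$\int_H \Delta r\; d\textup{vol}_n\ =\ \int_{\partial H}\langle \textup{grad }r,\mathbf{\hat{n}}\rangle\; d\textup{vol}_{n-1}(\partial H).$$
The left-hand side is bounded below by $(n-1)\sqrt{K_0}\,\textup{vol}_n(H)$ by the comparison bound, while the right-hand side is at most $\textup{vol}_{n-1}(\partial H)$ since $|\langle \textup{grad }r,\mathbf{\hat{n}}\rangle|\le |\textup{grad }r|=1$. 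Rearranging gives $\frac{\textup{vol}_{n-1}(\partial H)}{\textup{vol}_n(H)}\ge (n-1)\sqrt{K_0}$, and taking the infimum over all admissible $H$ produces the claimed bound \eqref{cheegerneg}.

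Two points require care. First, $\textup{grad }r$ is singular at the base point $p$: if $p\in H$ one excises a small ball $B_\epsilon(p)$, applies the divergence theorem on $H\setminus B_\epsilon(p)$, and lets $\epsilon\to 0$; the spurious inner-boundary flux equals $-\textup{vol}_{n-1}(\partial B_\epsilon(p))\to 0$ and the volume correction vanishes, so the estimate survives the limit. Alternatively, replacing $r$ by a Busemann function $b_\gamma$, which on a Cartan-Hadamard manifold is globally smooth with $|\textup{grad }b_\gamma|=1$ and $\Delta b_\gamma\ge (n-1)\sqrt{K_0}$, removes the singularity altogether. Second, and this is where the real work lies, one must actually justify the comparison inequality for $\Delta r$. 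I expect this to be the main obstacle, since it is the only place the sectional curvature hypothesis is genuinely used. It follows from the matrix Riccati equation $S'+S^2+R=0$ satisfied by the shape operator $S=\textup{Hess }r$ of the geodesic spheres along a unit-speed radial geodesic: the bound $K\le -K_0$ forces $S$ to dominate its hyperbolic model $\sqrt{K_0}\coth(\sqrt{K_0}\,r)$ on the normal bundle, and tracing over the $(n-1)$ directions orthogonal to $\textup{grad }r$ gives the stated lower bound on $\Delta r=\textup{tr }S$.
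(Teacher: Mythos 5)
Your proposal is correct and follows essentially the same route as the paper: apply the divergence theorem to $\textup{grad}$ of a distance function over $H$, bound the boundary flux by $\textup{vol}_{n-1}(\partial H)$ using $|\textup{grad }r|\le 1$, and invoke the Hessian/Laplacian comparison theorem under $K\le -K_0$ to get the pointwise bound $\Delta r\ge(\textup{dim }M-1)\sqrt{K_0}$. The only difference is cosmetic: the paper sidesteps the singularity at the base point by choosing $x_0$ with $d(x_0,H)>0$ so that $d_{x_0}$ is already smooth on $H$, whereas you allow $p\in H$ and excise a small ball (or pass to a Busemann function), which works equally well.
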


\begin{proof}
Let $H\subset M$ be a compact submanifold with a smooth boundary $\partial H$, and let $x_0\in M$ be a point such that $d(x_0,H)>0$. Consider the distance function $d_{x_0}(x):=d(x_0,x)$. Since $d(x_0,H)>0$ and $M$ has no conjugate points, when restricted to $H$ the function $d_{x_0}$ is smooth. Thus it makes sense to consider a Laplacian $\Delta d_{x_0}$ on $H$. First, by the Divergence theorem,
\begin{align*}
\int\limits_H \Delta d_{x_0} &= \int\limits_H \textup{div}(\textup{grad } d_{x_0} )
=\int\limits_{\partial H} \langle \textup{grad }d_{x_0}, \mathbf{\hat{n}} \rangle d\textup{vol}(\partial H)
\le \textup{vol}(\partial H) %\int\limits_{\partial H} d\textup{vol}(\partial H) = \textup{vol}(\partial H)
\end{align*}
where the above inequality is due to $\|\textup{grad }d_{x_0}\|\le 1$. In order to achieve \eqref{cheegerneg}, it suffices to show that $\displaystyle \Delta d_{x_0} \ge (\textup{dim }M-1)\sqrt{K_0}$.

In polar coordinates $(r,\phi)$, the Laplacian of a function $f=f(r,\phi)$ can be written as 
\begin{equation} \label{polarlap}
\Delta f(r,\phi) = \frac{\partial^2f}{\partial r^2}(r,\phi)+ H(r,\phi)\frac{\partial f}{\partial r}(r,\phi)+ \Delta^{S_r(x_0)}f(r,\phi)
\end{equation}
where $H(r,\phi)$ is the mean curvature, and $\Delta^{S_r(x_0)}$ is Laplacian restricted to $S_r(x_0)$ the sphere of radius $r$ centered at $x_0$. The derivation of this formula is analogous to the one given in \cite[Equation (2)]{KarpP}.

In particular, choose $f=d_{x_0}$, so it follows that $f(r,\phi)=r$ and $\frac{\partial f}{\partial r}=1$ and $\frac{\partial^2 f}{\partial r^2}=0$. Substitution into the equation \eqref{polarlap} then gives
$$\Delta d_{x_0} = H(r,\phi)+ \Delta^{S_r(x_0)}r=H(r,\phi),$$
because $\Delta^{S_r(x_0)}r=0$. By Hessian comparison theorem (see \cite[Lemma 2.18]{Kasue}), the condition on sectional curvature $K\le -K_0$ then implies that $$\Delta d_{x_0}=H(r,\phi)\ge \sqrt{K_0}\coth(-r\sqrt{K_0})(\textup{dim }M-1) \ge \sqrt{K_0}(\textup{dim }M-1)$$ as desired.

\end{proof}
\section{Laplacian operator and Bochner's formula} \label{LBsec}

In this section, we start with the definitions and properties of operators on Riemmanian manifolds, namely \textup{gradient}, \textup{divergence}, \textup{Laplacian}, and \textup{Hessian}. Then we state (without proof) Bochner's formula, which serves to be an essential background for Bakry-\'{E}mery curvature in Chapter \ref{BECchap}.

%Let $(M^n, g)$ be a Riemannian manifold. Moreover, let $f\in C^\infty(M)$ be a smooth real function on $M$, and $X\in\mathfrak X(M)$ be a smooth vector field on $M$.

\begin{definition}[Gradient, divergence and Laplacian] \ \\
\textit{Gradient operator} $\textup{grad}: C^\infty(M) \rightarrow \mathfrak X(M)$ maps a smooth real function $f$ to a smooth vector field \underline{$\grad f$} such that its evaluation at any point $x\in M$ is defined by the inner product: 
$$g_x(\grad f(x), w):= w(f)(x)$$
for every $w\in T_xM$. Here $w(f)$ is a differentiation of $f$ in direction of the vector $w$. 

\textit{Divergence operator} $\textup{div}: \mathfrak X(M) \rightarrow C^\infty(M)$ maps a smooth vector field $X$ to a smooth real function \underline{$\dive X$} defined at each point $x\in M$ by
$$(\dive X)(x) := \mbox{tr}_{T_x M} (v \mapsto \nabla_v X)$$
where the mapping is considered from the tangent space $T_xM$ onto itself, and $\nabla$ is Levi-Civita connection.

\textit{Laplacian operator} $\Delta: C^\infty(M) \rightarrow C^\infty(M)$ is then defined to be the composition: $ \Delta = \mbox{div} \circ \mbox{grad}$. 
\end{definition}

\begin{proposition} \label{localgrad}
In local coordinates,
\begin{equation} \label{grad}
\grad f(x)=\sum\limits_i \bigg(\sum\limits_j \diff{j}(x) g^{ij}(x)\bigg)\dif{i}
\end{equation} 
and 
\begin{equation} \label{div}
(\dive X)(x)=\frac{1}{\sqrt{\mbox{det }g(x)}}\sum\limits_i \dif{i}\big(\sqrt{\mbox{det }g(x)}\cdot X_i(x)\big)
\end{equation} 
where $g(x)$ is an $n\times n$ matrix whose $ij$-entry is $\displaystyle g_{ij}(x):=g_x(\dif{i},\dif{j})$, and $g^{ij}(x)$ is the $ij-entry$ of the inverse matrix $g^{-1}(x)$. Moreover, $X_i$ is the $i$-coordinate of vector field $X$, that is $\displaystyle X(x)=\sum\limits_i X_i(x)\dif{i}$ written in local coordinates.
\end{proposition}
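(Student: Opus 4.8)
The plan is to handle the two formulas separately, since the gradient identity is essentially linear algebra while the divergence identity requires the Christoffel symbols together with a determinant computation. Throughout I write $\partial_i$ for the coordinate basis field $\frac{\partial}{\partial x_i}$.

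For the gradient formula \eqref{grad}, I would expand the unknown field in the coordinate basis as $\grad f(x)=\sum_i a_i(x)\,\partial_i$ and pin down the coefficients $a_i$ from the defining property $g_x(\grad f(x),w)=w(f)(x)$. Testing against $w=\partial_k$ gives $\sum_i a_i g_{ik}=\diff{k}$ for every $k$, a linear system whose coefficient matrix is $g(x)$. As $g(x)$ is positive definite and hence invertible, I would multiply by $g^{jk}$, sum over $k$, and use $\sum_k g_{ik}g^{jk}=\delta_i^j$ together with the symmetry $g^{ij}=g^{ji}$ to read off $a_i=\sum_j \diff{j}\,g^{ij}$, which is exactly \eqref{grad}. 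This step is routine and poses no real difficulty.

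For the divergence formula \eqref{div}, I would start from $(\dive X)(x)=\mathrm{tr}_{T_xM}(v\mapsto\nabla_v X)$ and expand $\nabla_{\partial_k}X$ in the coordinate basis via the Christoffel symbols $\nabla_{\partial_k}\partial_i=\sum_l \Gamma_{ki}^l\partial_l$. The $l$-component of $\nabla_{\partial_k}X$ is $\partial_k X_l+\sum_i X_i\Gamma_{ki}^l$; taking the trace (setting $l=k$ and summing) yields $\dive X=\sum_k \partial_k X_k+\sum_{i,k}X_i\Gamma_{ki}^k$. The whole problem then reduces to identifying the contracted Christoffel symbol $\sum_k \Gamma_{ki}^k$.

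The key step, which I expect to be the main obstacle, is the identity $\sum_k \Gamma_{ki}^k=\frac{1}{\sqrt{\dete g}}\,\partial_i\sqrt{\dete g}$. To prove it I would substitute the standard expression for $\Gamma_{ki}^l$ in terms of the derivatives of $g_{ij}$, contract $l=k$, and note that two of the three resulting terms cancel after relabelling the summation indices $k\leftrightarrow m$ and using the symmetry of both $g$ and $g^{-1}$; this leaves $\sum_k \Gamma_{ki}^k=\frac12\sum_{k,m}g^{km}\partial_i g_{km}$. I would then recognise the right-hand side as $\frac12\,\mathrm{tr}(g^{-1}\partial_i g)$ and invoke Jacobi's formula $\partial_i\dete g=(\dete g)\,\mathrm{tr}(g^{-1}\partial_i g)$ to rewrite it as $\frac12\partial_i\log\dete g=\partial_i\log\sqrt{\dete g}$, which is the claimed identity. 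Substituting back and regrouping by the product rule $\frac{1}{\sqrt{\dete g}}\,\partial_i\big(\sqrt{\dete g}\,X_i\big)=\partial_i X_i+X_i\frac{1}{\sqrt{\dete g}}\partial_i\sqrt{\dete g}$ then delivers \eqref{div}.
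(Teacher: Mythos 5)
Your proposal is correct and follows essentially the same route as the paper: the gradient coefficients are determined by testing against coordinate vector fields and inverting the metric, and the divergence formula is reduced to the contracted Christoffel identity $\sum_k\Gamma_{ki}^k=\tfrac12\sum_{k,l}g^{kl}\partial_i g_{kl}=\frac{1}{\sqrt{\dete g}}\,\partial_i\sqrt{\dete g}$ via Jacobi's formula and the product rule. Your handling of the trace (reading off the diagonal coefficients of $\nabla_{\partial_k}X$ in the coordinate basis rather than pairing with the metric) is in fact the cleaner way to phrase the step the paper writes as $\sum_i\langle\nabla_{\partial_i}X,\partial_i\rangle$, but the computation is identical.
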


Since most of the time, functions are evaluated at a fixed point $x$, without ambiguity we may omit the terms $x$ in the writing. Moreover, we write $\displaystyle \partial_i:=\dif{i}$ and $\langle \cdot,\cdot\rangle := g_x(\cdot,\cdot)$.

\begin{proof}
Write vector field $\displaystyle \grad f(x)= \sum\limits_i a_i(x)\dif{i}$ with respect to local coordinates (or in short, $\grad f= \sum\limits_i a_i\partial_i$).

\noindent By definition of gradient, we have $$\diff{j} = \langle\grad f, \partial_j\rangle = \sum\limits_i a_{i} \langle\partial_i, \partial_j\rangle = \sum\limits_i a_{i} g_{ij}.$$
It follows that, for a fixed $k$,
\begin{align*}
\sum\limits_j \diff{j} \cdot g^{jk}
&=\sum\limits_{i,j} a_{i} g_{ij} g^{jk}\\
&=\sum\limits_i a_{i} \bigg(\sum\limits_j g_{ij}g^{jk}\bigg)\\
&=\sum\limits_i a_{i}\delta_{ik} = a_k
\end{align*}
where $\delta_{ik}$ is Kronecker delta. The equation \eqref{grad} immediately follows.

In the definition of $\dive X$, the mapping 
$v\mapsto \nabla_v X$ can be represented as a matrix $B=[b_{ij}]$ with respect to a (basis) frame $\{E_i\}_{i=1}^n$ of $TM$. Then $$\dive X=\textup{tr} (B) = \sum\limits_{i=1}^n b_{ii} = \sum\limits_{i=1}^n \langle \nabla_{E_i}X, E_i\rangle,$$ which is independent to the choice of frame $E_i$'s (not needed to be orthonormal).

In particular, choose $E_i=\partial_i$ for all $i$, we have
\begin{align*}
 \nabla_{\partial_i}X
&=\sum\limits_j \nabla_{\partial_i}( X_j\partial_j)\\
&= \sum\limits_j \bigg(\frac{\partial}{\partial x_i}X_j\cdot\partial_j +X_j\cdot\nabla_{\partial_i}\partial_j\bigg)\\
&= \sum\limits_j \bigg(\frac{\partial}{\partial x_i}X_j\cdot\partial_j +X_j\sum\limits_{k}\Gamma_{ij}^k\partial_k\bigg)
\end{align*}
where $\Gamma_{ij}^k$'s are the Christoffel symbols.\\

\noindent Therefore,
\begin{equation} \label{divtr}
\dive X=\sum\limits_{i=1}^n \langle \nabla_{\partial_i}X, \partial_i\rangle=\sum\limits_i\bigg(\frac{\partial}{\partial x_i}X_i+X_i\sum\limits_k \Gamma_{ki}^k \bigg)
\end{equation}

\noindent where
\begin{align} \label{cs}
\sum\limits_k \Gamma_{ki}^k &= \frac{1}{2} \sum\limits_k \sum\limits_l g^{kl}(\frac{\partial}{\partial x_i}g_{kl}+\frac{\partial}{\partial x_k}g_{il}-\frac{\partial}{\partial x_l}g_{ki}) \nonumber\\
&=\frac{1}{2} \sum\limits_{k,l}g^{kl}\frac{\partial}{\partial x_i}g_{kl}
\end{align}

\noindent On the other hand, for each fixed $i$, we have 
\begin{align} \label{detg}
 \frac{\partial}{\partial x_i} (\sqrt{\mbox{det }g}\cdot X_i)
&=\sqrt{\mbox{det }g} \frac{\partial}{\partial x_i}X_i+X_i\frac{\partial}{\partial x_i}\sqrt{\mbox{det }g}
\end{align}

\noindent where the derivative term $\displaystyle \frac{\partial}{\partial x_i}\sqrt{\textup{det }g}$ can be calculated as
\begin{align*}
\frac{\partial}{\partial x_i}\sqrt{\mbox{det }g} &= \frac{1}{2\sqrt{\mbox{det }g}} \frac{\partial}{\partial x_i} \mbox{det }g\\
&= \frac{1}{2}\sqrt{\mbox{det }g} \cdot \mbox{tr}\bigg(g^{-1}\cdot \frac{\partial g}{\partial x_i}\bigg) &&\textup{(Jacobi's formula)}\\
&= \frac{1}{2}\sqrt{\mbox{det }g}\cdot  \sum\limits_{k,l} g^{kl}\frac{\partial }{\partial x_i}g_{lk}\\
&\stackrel{\eqref{cs}}{=} \sqrt{\mbox{det }g} \cdot\sum\limits_k \Gamma_{ki}^k
\end{align*}

\noindent Summing equation \eqref{detg} over index $i$, we obtain the desired equation \eqref{div}:
$$\sum\limits_i \frac{\partial}{\partial x_i} (\sqrt{\mbox{det }g}\cdot X_i) \stackrel{\eqref{divtr}}{=} \sqrt{\mbox{det }g}\cdot\dive X.$$

\end{proof}

The explicit calculation of $\Delta$ in local coordinates follows immediately from Proposition \ref{localgrad}.

\begin{corollary}
In local coordinates, $$\Delta f=\frac{1}{\sqrt{\dete g}} \sum\limits_i \frac{\partial}{\partial x_i}\bigg(\sqrt{\dete g} \sum\limits_j \diff{j}g^{ji}\bigg).$$
\end{corollary}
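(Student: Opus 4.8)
The plan is to combine the two formulas of Proposition \ref{localgrad} directly, since by definition $\Delta=\dive\circ\grad$. First I would read off from the gradient formula \eqref{grad} that, writing $\grad f=\sum_i(\grad f)_i\,\partial_i$ in local coordinates, the $i$-th coordinate is
$$(\grad f)_i=\sum_j\diff{j}\,g^{ij}.$$
Next I would apply the divergence formula \eqref{div} to the particular vector field $X=\grad f$, feeding in exactly these coordinates $X_i=(\grad f)_i$. The substitution is immediate and yields
$$\Delta f=\dive(\grad f)=\frac{1}{\sqrt{\dete g}}\sum_i\frac{\partial}{\partial x_i}\bigg(\sqrt{\dete g}\sum_j\diff{j}\,g^{ij}\bigg),$$
which is the claimed expression.

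The only point needing a word of justification is the index order in the statement, which displays $g^{ji}$ rather than the $g^{ij}$ produced by the substitution. Since the metric $g$ is symmetric, its inverse $g^{-1}$ is symmetric as well, so $g^{ij}=g^{ji}$ and the two forms coincide. There is accordingly no genuine obstacle here: the corollary is a one-line substitution whose content is already fully packaged in the two halves of Proposition \ref{localgrad}. The one piece of care required is bookkeeping of the summation indices, keeping the inner index $j$ coming from the gradient distinct from the outer index $i$ supplied by the divergence, so that the nested sums are not accidentally collapsed.
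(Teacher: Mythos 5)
Your proposal is correct and matches the paper exactly: the paper states that the corollary ``follows immediately from Proposition \ref{localgrad}'' by substituting $X=\grad f$ into the divergence formula, which is precisely your one-line substitution. Your added remark about the symmetry $g^{ij}=g^{ji}$ is a sensible piece of bookkeeping that the paper leaves implicit.
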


\begin{remark}
For example, in $\mathbb{R}^n$ with Euclidean metric, the laplacian $\Delta$ is 
$$\Delta f= \sum\limits_{i=1}^n \frac{\partial^2 f}{\partial x_i^2}$$
\end{remark}

The following proposition is the product rule of gradient, divergence, and Laplacian.
\begin{proposition}[Product rule] \label{PR}
Let $f,h\in C^\infty(M)$ be smooth functions and $X\in\mathfrak{X}(M)$ be a vector field. Then
\begin{itemize}
\item[\textup{(a)}] $\textup{grad}(fh)= f\grad h+h\grad f$
\item[\textup{(b)}] $\textup{div}(fX) = \langle \grad f, X\rangle + f\dive X$
\item[\textup{(c)}] $\Delta(fh)= 2\langle \grad f, \grad h \rangle +f\Delta h+h\Delta f$
\end{itemize}
\end{proposition}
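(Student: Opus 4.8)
The plan is to treat the three identities in the order (a), (b), (c), deriving the Laplacian rule (c) from the first two. For part (a) I would work directly from the defining property of the gradient, $g_x(\grad u, w)=w(u)$ for every $w\in T_xM$. Fixing a point $x$ and an arbitrary tangent vector $w$, I would compute $\langle \grad(fh), w\rangle = w(fh)$ and use that $w$, being a tangent vector, acts as a derivation: $w(fh)=f\,w(h)+h\,w(f)$. Rewriting each term via the gradient definition gives $f\langle \grad h, w\rangle + h\langle \grad f, w\rangle = \langle f\grad h + h\grad f, w\rangle$. Since $w$ is arbitrary and $g_x$ is nondegenerate, the vectors $\grad(fh)$ and $f\grad h + h\grad f$ must coincide, which is (a).

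For part (b) I would use the Leibniz rule of the Levi-Civita connection, $\nabla_v(fX)=(vf)\,X+f\,\nabla_v X$, so that the endomorphism $v\mapsto \nabla_v(fX)$ of $T_xM$ splits as the sum of $v\mapsto f\,\nabla_v X$ and $v\mapsto (vf)\,X$. Taking traces and using linearity, the first summand contributes $f\,\dive X$. The second summand is a rank-one endomorphism of the form $v\mapsto df(v)\,X$, whose trace is simply $df(X)=X(f)=\langle \grad f, X\rangle$ by the definition of the gradient; alternatively one can evaluate the trace in an orthonormal frame $\{E_i\}$ and use $\sum_i \langle \grad f, E_i\rangle\langle X, E_i\rangle=\langle \grad f, X\rangle$. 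Summing the two contributions yields (b).

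Finally, part (c) follows by composing the previous two. Writing $\Delta(fh)=\dive(\grad(fh))$ and substituting (a), I would obtain $\dive(f\grad h)+\dive(h\grad f)$. Applying (b) to each term, with $X=\grad h$ and $X=\grad f$ respectively and recalling $\Delta=\dive\circ\grad$, produces $\langle \grad f,\grad h\rangle+f\Delta h$ and $\langle \grad h,\grad f\rangle+h\Delta f$; adding these and using symmetry of the inner product gives the factor of $2$ in front of $\langle \grad f,\grad h\rangle$.

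The only step requiring genuine care is the trace computation in (b): one must recognize that $v\mapsto (vf)\,X$ is rank one and correctly identify its trace with $\langle \grad f, X\rangle$ (rather than, say, a coordinate-dependent expression). Everything else is a formal consequence of the Leibniz rules for derivations and for the connection together with the definitions, so I expect no further obstacle.
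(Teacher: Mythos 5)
Your proposal is correct and follows essentially the same route as the paper: parts (a) and (b) are deduced from the Leibniz rules for tangent-vector derivations and for the Levi-Civita connection respectively, and (c) is obtained by composing them via $\Delta=\dive\circ\,\textup{grad}$. Your treatment of the trace in (b), identifying the rank-one endomorphism $v\mapsto df(v)\,X$ and its trace $df(X)=\langle \grad f, X\rangle$, simply makes explicit a step the paper leaves as a one-line remark.
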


\begin{proof}
By the definition of gradient and divergence, the product rule in part (a) and (b) is induced from the product rule of directional derivative and the product rule of Levi-Civita connection, respectively.
For part (c), we have
\begin{align*}
\Delta(fh)&=\textup{div}(\textup{grad}(fh))\\
&=\textup{div}(f\grad h)+\dive(h\grad f)\\
&=\langle \grad f, \grad h\rangle +f\textup{div}(\grad h) + \langle \grad h, \grad f\rangle +h\textup{div}(\grad f)\\
&=2\langle \grad f, \grad h\rangle+f\Delta h+h\Delta f.
\end{align*}

\end{proof}

\begin{definition} [Hessian]
For a smooth function $f\in C^\infty(M)$, the Hessian tensor, $\textup{Hess}(f)$, is a bilinear form defined as $$\textup{Hess}(f) (X,Y) := \langle \nabla_X \grad f, Y \rangle$$ for any $X,Y\in TM$.
\end{definition}

A fundamental property of the Hessian is symmetry: 
\begin{proposition}
$\textup{Hess}(f) (X,Y)= \textup{Hess}(f) (Y,X)$
\end{proposition}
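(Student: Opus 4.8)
The plan is to unwind the definition of the Hessian using the two structural properties that characterize the Levi-Civita connection $\nabla$: its compatibility with the metric and the fact that it is torsion-free. The definition gives $\textup{Hess}(f)(X,Y) = \langle \nabla_X \grad f, Y\rangle$, so my goal is to massage this expression into a form in which the roles of $X$ and $Y$ become manifestly interchangeable up to a term that vanishes.

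First I would apply metric compatibility, $X\langle V, W\rangle = \langle \nabla_X V, W\rangle + \langle V, \nabla_X W\rangle$, with $V = \grad f$ and $W = Y$, in order to isolate the Hessian term:
$$\textup{Hess}(f)(X,Y) = X\langle \grad f, Y\rangle - \langle \grad f, \nabla_X Y\rangle.$$
Next I would invoke the defining property of the gradient, $\langle \grad f, W\rangle = W(f)$, to convert both inner products on the right into directional derivatives, namely $\langle \grad f, Y\rangle = Y(f)$ and $\langle \grad f, \nabla_X Y\rangle = (\nabla_X Y)(f)$. This collapses the expression to
$$\textup{Hess}(f)(X,Y) = X(Y(f)) - (\nabla_X Y)(f),$$
and an identical computation gives $\textup{Hess}(f)(Y,X) = Y(X(f)) - (\nabla_Y X)(f)$.

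The final step is to form the difference of these two expressions and group the terms: the second-order pieces combine into the Lie bracket $X(Y(f)) - Y(X(f)) = [X,Y](f)$, while the connection pieces combine into $(\nabla_X Y - \nabla_Y X)(f)$. The torsion-free property $\nabla_X Y - \nabla_Y X = [X,Y]$ then makes the two contributions cancel exactly, yielding $\textup{Hess}(f)(X,Y) - \textup{Hess}(f)(Y,X) = 0$. The only real subtlety, and the place where I would be most careful, is the appeal to these two defining properties of $\nabla$, since they are precisely what single out the Levi-Civita connection among all affine connections; everything else in the argument is a direct substitution of the definitions of gradient and Hessian already introduced above.
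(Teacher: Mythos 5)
Your proof is correct and follows essentially the same route as the paper: metric compatibility to rewrite $\textup{Hess}(f)(X,Y)$ as $X(Yf)$ minus a connection term, then torsion-freeness to cancel the difference via the Lie bracket. The only cosmetic difference is that you convert $\langle \grad f, \nabla_X Y\rangle$ into $(\nabla_X Y)(f)$ before subtracting, while the paper keeps it as an inner product until the final step.
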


\begin{proof} We have
\begin{align*}
\textup{Hess}(f) (X,Y) &= \langle \nabla_X \grad f, Y \rangle\\
&= X\langle \grad f, Y\rangle - \langle \grad f, \nabla_X Y\rangle \\
&= X(Yf)-\langle \grad f, \nabla_X Y\rangle 
\end{align*}
\noindent where in the second line of equations, we use the metric property of $\nabla$: $X\langle Y,Z \rangle=\langle \nabla_X Y,Z\rangle+ \langle Y,\nabla_X Z \rangle$. 

\noindent Similarly, we have $$ \textup{Hess}(f) (Y,X)=Y(Xf)-\langle \grad f, \nabla_Y X\rangle,$$ and therefore
\begin{align*}
\textup{Hess}(f) (X,Y) - \textup{Hess}(f) (Y,X) &= X(Yf)-Y(Xf) - \langle \grad f, \nabla_X Y - \nabla_Y X\rangle\\
&=[X,Y](f) - \langle \grad f, [X,Y] \rangle \\
&= [X,Y](f)- [X,Y](f) \\ &= 0
\end{align*}
\noindent where $[X,Y]$ is the Lie bracket of vector fields, and the second line of equations is due to the torsion-freeness of $\nabla$: $\nabla_X Y - \nabla_Y X = [X,Y]$. \end{proof}

The Hessian tensor can also be represented by a matrix $A=[a_{ij}]$ w.r.t. an arbitrary orthonormal frame $\{E_i\}_{i=1}^n$ of $TM$, that is
$$a_{ij}= \textup{Hess}(f) (E_i, E_j).$$

\noindent Moreover, the norm $||\textup{Hess }f ||$ is defined as in Hilbert-Schmidt norm:
$$|| \textup{Hess }f|| := \sqrt{\textup{tr} (AA^t)}=\sqrt{ \sum\limits_{i,j} a_{ij}^2}$$
\noindent which is independent to the choice of orthonormal frame $E_i$'s.

\begin{proposition} \label{HL}
The following two relations hold between Hessian and Laplacian.
\begin{itemize}
\item[\textup{(a)}] $\textup{tr}(\textup{Hess } f) = \Delta f$
\item[\textup{(b)}] $|| \textup{Hess }f||^2 \ge \frac{1}{n}(\Delta f)^2$
\end{itemize}
\end{proposition}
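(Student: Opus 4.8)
The plan is to prove both parts by working in an arbitrary orthonormal frame $\{E_i\}_{i=1}^n$ and connecting the trace definition of divergence to the matrix representation of the Hessian. For part (a), I would first unwind the definition of the Laplacian: since $\Delta f = \dive(\grad f)$ and the divergence is itself defined as a trace, we have
$$\Delta f = \dive(\grad f) = \textup{tr}_{T_xM}(v\mapsto \nabla_v \grad f).$$
Computing this trace with respect to the orthonormal frame yields $\sum_{i=1}^n \langle \nabla_{E_i}\grad f, E_i\rangle$. The key observation is then that, by the very definition of the Hessian, each summand equals $\textup{Hess}(f)(E_i,E_i) = a_{ii}$, so the whole sum is exactly $\textup{tr}(\textup{Hess } f)$. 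Thus part (a) is essentially a matter of recognizing that both quantities are the trace of the same linear map $v\mapsto\nabla_v\grad f$, expressed in the frame.

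For part (b), I would use the frame representation established just before the proposition, namely $\|\textup{Hess } f\|^2 = \sum_{i,j} a_{ij}^2$ with $a_{ij} = \textup{Hess}(f)(E_i,E_j)$, together with part (a), which gives $\Delta f = \sum_i a_{ii}$. The strategy is a two-step bound. First, since every off-diagonal square $a_{ij}^2$ is non-negative, discarding them gives
$$\sum_{i,j} a_{ij}^2 \ge \sum_{i=1}^n a_{ii}^2.$$
Second, I would apply the Cauchy-Schwarz inequality to the diagonal entries against the all-ones vector:
$$\Big(\sum_{i=1}^n a_{ii}\Big)^2 = \Big(\sum_{i=1}^n 1\cdot a_{ii}\Big)^2 \le \Big(\sum_{i=1}^n 1\Big)\Big(\sum_{i=1}^n a_{ii}^2\Big) = n\sum_{i=1}^n a_{ii}^2.$$
Chaining these two inequalities yields $\|\textup{Hess } f\|^2 \ge \sum_i a_{ii}^2 \ge \frac{1}{n}(\Delta f)^2$, as required. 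I would also remark that equality in part (b) forces all off-diagonal entries to vanish and all diagonal entries to be equal, i.e. the Hessian is a scalar multiple of the identity.

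The proof is largely formal, so there is no deep obstacle; the main care-point is ensuring the definitions genuinely align in part (a). Specifically, I must confirm that the trace appearing in the definition of $\dive X$ is taken over the same endomorphism $v\mapsto\nabla_v\grad f$ whose $(i,i)$-components are the diagonal Hessian entries $\langle\nabla_{E_i}\grad f, E_i\rangle$. Since the earlier discussion already notes that this trace is independent of the chosen frame, I am free to use an orthonormal one, which makes the matrix $A=[a_{ij}]$ of the Hessian coincide with the matrix of the endomorphism and keeps the Hilbert–Schmidt norm clean. Once this identification is in place, part (b) reduces to the elementary Cauchy-Schwarz estimate above.
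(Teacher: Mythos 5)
Your proposal is correct and follows essentially the same route as the paper: part (a) by identifying both quantities with the trace of the endomorphism $v\mapsto\nabla_v\grad f$ in an orthonormal frame, and part (b) by discarding the off-diagonal squares and applying Cauchy--Schwarz to the diagonal entries. The remark on the equality case is a small addition not in the paper, but the argument is otherwise identical.
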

\begin{proof}
Part (a) follows directly from definitions:
$$\textup{tr}(\textup{Hess } f)=\sum\limits_{i=1}^n a_{ii}=\sum\limits_{i=1}^n \langle \nabla_{E_i} \grad f, E_i \rangle =\dive(\grad f) =\Delta f.$$

\noindent For part (b), we apply Cauchy-Schwarz's inequality to part (a):
\begin{align*}
|| \textup{Hess }f||^2 = \sum\limits_{i,j} a_{ij}^2  \ge \sum\limits_{i} a_{ii}^2 \stackrel{\small{C.S.}}{\ge} \frac{1}{n}\bigg(\sum\limits_{i} a_{ii}\bigg)^2 =\frac{1}{n}(\Delta f)^2.
\end{align*}
\end{proof}

We are now ready for the statement of Bochner's formula, an equation that merges the defined operators together and connects to Ricci curvature. This formula is a fundamental motivation of the Bakry-\'{E}mery curvature notion introduced in Chapter 3. We omit the proof of the formula; see \cite[Proposition 4.15]{GHL} for details.
\begin{theorem}[Bochner's formula] \label{bochner}
Let $(M^n,g)$ be a Riemannian manifold. For any smooth function $f\in C^\infty(M)$, the identity
\begin{equation} \label{bochner}
\frac{1}{2}\Delta |\grad f|^2 = ||\textup{Hess }f||^2 + \langle \grad\Delta f, \grad f\rangle +\textup{Ric}(\grad f),
\end{equation}
\noindent holds pointwise on $M$.
\end{theorem}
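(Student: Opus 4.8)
The plan is to prove the identity pointwise at an arbitrary point $p\in M$, exploiting the freedom to choose a convenient local frame. Concretely, I would pick an orthonormal frame $\{E_i\}_{i=1}^n$ that is \emph{geodesic} (synchronous) at $p$, meaning $\nabla_{E_j}E_i|_p = 0$ for all $i,j$; such a frame arises from normal coordinates centred at $p$. The advantage is that at $p$ all the Christoffel-type corrections drop out, so directional differentiation by the frame vectors agrees with covariant differentiation there: in particular $\Delta h|_p = \sum_i (E_iE_i h)|_p$ for any function $h$, and $(E_iE_kf)|_p = \textup{Hess}(f)(E_i,E_k)$.

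With this frame, write $|\grad f|^2 = \sum_k (E_kf)^2$ and apply $\tfrac12\Delta = \tfrac12\sum_i E_iE_i$ at $p$. Differentiating twice via the product rule (Proposition \ref{PR}) splits the result into two groups,
\begin{equation*}
\tfrac12\Delta|\grad f|^2\big|_p = \sum_{i,k}(E_iE_kf)^2 + \sum_{i,k}(E_kf)(E_iE_iE_kf).
\end{equation*}
The first group is immediately recognised as $\|\textup{Hess}\,f\|^2$, since at $p$ the entries $E_iE_kf$ are exactly the Hessian matrix entries in the orthonormal frame, and by the Hilbert--Schmidt definition $\|\textup{Hess}\,f\|^2 = \sum_{i,k}(\textup{Hess}(f)(E_i,E_k))^2$. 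The remaining task is to identify the second group with $\langle\grad\Delta f,\grad f\rangle + \textup{Ric}(\grad f)$.

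For that second group I would reorder the third derivatives $E_iE_iE_kf$ into $E_kE_iE_if$. Commuting the differentiation past the index $k$ is exactly where curvature enters: the Ricci identity (the commutator of second covariant derivatives acting on the gradient field) expresses the difference between $\nabla_i\nabla_k\grad f$ and $\nabla_k\nabla_i\grad f$ in terms of the Riemann tensor contracted against $\grad f$. Summing over the repeated index $i$ contracts the Riemann tensor into the Ricci tensor, producing the term $\textup{Ric}(\grad f)$, while the reordered piece $\sum_k (E_kf)\,E_k\big(\sum_i E_iE_if\big) = \sum_k(E_kf)\,E_k(\Delta f) = \langle\grad\Delta f,\grad f\rangle$ supplies the middle term. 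Assembling the three contributions yields the formula.

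I expect the main obstacle to be precisely this curvature bookkeeping: one must state the Ricci identity with a fixed sign convention, verify that it applies to the gradient field, and check that the contraction over $i$ lands on the correct pair of indices of the Riemann tensor, so that the result is $\textup{Ric}(\grad f)$ with the sign matching the convention $\Delta=\dive\circ\grad$ used here (the one giving $\Delta f=\sum_i\partial_i^2 f$ in Euclidean space), rather than a Ricci term of opposite sign or a different contraction. A secondary care point is justifying that the whole computation, although carried out in a frame adapted to $p$, produces a tensorial, frame-independent identity valid at every point.
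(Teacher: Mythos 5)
The paper does not prove this statement: it explicitly omits the proof and refers to \cite[Proposition 4.15]{GHL}, so there is no in-paper argument to compare against. Your outline is the standard direct proof of Bochner's formula, and the overall strategy --- compute $\tfrac12\Delta|\grad f|^2$ at a point $p$ in a synchronous orthonormal frame, identify $\sum_{i,k}(E_iE_kf)^2$ with $\|\textup{Hess }f\|^2$, and convert the third-derivative group into $\langle \grad \Delta f,\grad f\rangle$ plus a Ricci term via a commutation identity --- is correct and is exactly how the cited references do it. Your displayed decomposition is also correct as written.

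The gap is in the step you yourself flag as the ``main obstacle'', and it is slightly deeper than sign conventions. The Ricci identity you invoke is a statement about \emph{covariant} derivatives, whereas your second group involves the raw iterated frame derivatives $E_iE_iE_kf$; these agree with covariant derivatives at $p$ only up to second order. At third order one picks up terms in $\nabla_{E_i}\nabla_{E_j}E_k|_p$, which do \emph{not} vanish merely because $\nabla_{E_j}E_k|_p=0$. Carrying out the commutation carefully gives
\begin{equation*}
\sum_i E_iE_iE_kf\big|_p \;=\; E_k(\Delta f)\big|_p \;+\;\textup{Ric}(E_k,\grad f)\big|_p\;+\;\sum_i\big\langle \nabla_{E_i}\nabla_{E_i}E_k,\ \grad f\big\rangle\big|_p ,
\end{equation*}
so your argument is complete only once the last sum is shown to vanish. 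It does vanish for the frame obtained by parallel transport along radial geodesics (along each radial geodesic $\nabla_{\partial_t}E_k\equiv 0$, hence $\nabla^2_{v,v}E_k|_p=0$ for every $v$, and polarization kills the symmetric part of $\nabla^2E_k|_p$, in particular its trace), but that is an additional property of the synchronous frame beyond first-order flatness and must be stated and proved. Alternatively, you can avoid the issue entirely by working with the covariant $2$-tensor $\nabla df$ throughout: write the second group as $\sum_{i,k}(E_kf)\,(\nabla_{E_i}\nabla df)(E_i,E_k)$, use the symmetry of $\nabla df$ to swap its arguments, apply the commutation rule $(\nabla^2 df)(X,Y,Z)-(\nabla^2 df)(Y,X,Z)=-df\big(R(X,Y)Z\big)$, and contract over $i$; this lands on $E_k(\Delta f)+\textup{Ric}(E_k,\grad f)$ with the sign consistent with $\Delta=\dive\circ\,\textup{grad}$ and with $\textup{Ric}(X,Y)=\sum_i\langle R(X,E_i)E_i,Y\rangle$. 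Your final concern about frame-independence is not an issue: both sides of the identity are globally defined functions, so verifying equality at an arbitrary $p$ in any frame adapted to $p$ suffices.
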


\section{Eigenvalues of Laplacian and Lichnerowicz theorem} \label{eigenlap}

Let $(M^n,g)$ be a compact connected Riemannian manifold. \textit{Eigenvalues} of Laplacian operator on $M$ are real numbers $\lambda$ such that there exists a nontrivial solution $f\in C^2(M)$ (i.e. twice continuously differentiable) to the system of equations
\begin{align*}
\Delta f + \lambda f=0 &\mbox{ on } M\\
f=0 &\mbox{ on } \partial M.
\end{align*}
Such function $f$ is called an \textit{eigenfunction} corresponding to $\lambda$. In case, $M$ is a closed manifold, the condition $f=0$ on $\partial M$ may be removed.

The eigenvalues $\lambda$ of Laplacian are known to be real, positive, and arrangeable in an increasing order (see \cite[pp.~8]{Chavel}): $$0=\lambda_0 < \lambda_1 \le \lambda_2 \le ... , \mbox{where  } \lambda_n\rightarrow \infty \mbox{ as } n\rightarrow\infty.$$

In Lichnerowicz's theorem, the first (i.e. smallest) nonzero eigenvalue $\lambda_1$ is estimated from below, under an assumption that Ricci curvature is strictly positive and and bounded away from zero. Here we prove in a special case where $M$ is a closed manifold. A proof in general case where $M$ is compact can be referred to e.g. \cite[Theorem 4.70]{GHL}.
\begin{theorem}[Lichnerowicz]
Let $(M^n,g)$ be a closed Riemannian manifold. Suppose that $\textup{Ric}_x(v)\ge K>0$ for all $x\in M$ and $v\in S_xM$ (that is, $v\in T_xM$ and $|v|=1$). Then $\lambda_1 \ge \frac{n}{n-1}K$.
\end{theorem}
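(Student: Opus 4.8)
The plan is to substitute an eigenfunction into Bochner's formula (Theorem~\ref{bochner}) and integrate over $M$, using that the integral of a Laplacian vanishes on a closed manifold. First I would fix a nontrivial eigenfunction $f$ for $\lambda_1$, so $\Delta f=-\lambda_1 f$, and apply Bochner's identity to $f$. Integrating both sides over $M$, the left-hand side $\int_M \tfrac12\Delta|\grad f|^2=\int_M\tfrac12\dive(\grad|\grad f|^2)$ vanishes by the Divergence theorem, since $\partial M=\emptyset$. This converts the pointwise identity into the integral relation
$$0=\int_M ||\textup{Hess }f||^2+\int_M\langle\grad\Delta f,\grad f\rangle+\int_M\textup{Ric}(\grad f).$$

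Next I would bound the three surviving terms from below. The middle term is exact: since $\grad\Delta f=-\lambda_1\grad f$, we get $\int_M\langle\grad\Delta f,\grad f\rangle=-\lambda_1\int_M|\grad f|^2$. For the Hessian term I would invoke Proposition~\ref{HL}(b) to obtain $||\textup{Hess }f||^2\ge\tfrac1n(\Delta f)^2=\tfrac{\lambda_1^2}{n}f^2$, hence $\int_M ||\textup{Hess }f||^2\ge\tfrac{\lambda_1^2}{n}\int_M f^2$. For the curvature term, the hypothesis $\textup{Ric}_x(v)\ge K$ for unit $v$ extends by the quadratic homogeneity of Ricci to $\textup{Ric}(\grad f)\ge K|\grad f|^2$, so $\int_M\textup{Ric}(\grad f)\ge K\int_M|\grad f|^2$.

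The step that actually closes the argument, and the one I would flag as the crux, is relating the two distinct integrals $\int_M|\grad f|^2$ and $\int_M f^2$. Integrating the product rule Proposition~\ref{PR}(b) with $X=\grad f$ over the closed $M$ gives $\int_M|\grad f|^2=-\int_M f\Delta f=\lambda_1\int_M f^2$. Without this identity the two integrals stay independent and the estimate cannot be reduced to a statement about $\lambda_1$ alone. Feeding the three bounds into the integrated relation and then substituting this identity yields
$$0\ge\frac{\lambda_1^2}{n}\int_M f^2-\lambda_1\int_M|\grad f|^2+K\int_M|\grad f|^2=\lambda_1\Big(\frac{\lambda_1}{n}-\lambda_1+K\Big)\int_M f^2.$$
Since $f$ is nontrivial, $\int_M f^2>0$, and since $\lambda_1>0$ I may divide through by $\lambda_1\int_M f^2$ to get $\tfrac{n-1}{n}\lambda_1\ge K$, i.e. $\lambda_1\ge\tfrac{n}{n-1}K$. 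The genuine subtlety is only bookkeeping: keeping the two integrals separate until the integration-by-parts identity collapses them, and remembering that closedness of $M$ is exactly what discards the boundary contributions at both the Bochner-integration and the integration-by-parts stages.
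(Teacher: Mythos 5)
Your proposal is correct and follows essentially the same route as the paper: integrate Bochner's formula over the closed manifold, bound the Hessian term by $\tfrac{1}{n}(\Delta f)^2$ via Proposition~\ref{HL}(b), use the quadratic form of the Ricci bound, and collapse $\int_M|\grad f|^2$ to $\lambda_1\int_M f^2$ by integration by parts (the paper packages this last step as the product rule $\Delta f^2=2|\grad f|^2+2f\Delta f$ together with $\int_M\Delta f^2=0$, which is the same identity). The only difference is bookkeeping — the paper normalizes $\int_M f^2=1$ and substitutes pointwise before integrating — so nothing further is needed.
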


\begin{proof}
Consider an eigenfunction $f$ satisfying $\Delta f +\lambda f= 0$. Upon scalar multiplication of $f$, we may further assume that $\int\limits_M f^2=1$. Bochner's formula (see Theorem \ref{bochner}) gives
\begin{align*}
\frac{1}{2}\Delta |\grad f|^2 &= ||\mbox{Hess }f||^2 + \langle\grad (-\lambda f), \grad f\rangle +\mbox{Ric}(\grad f)\\
&=||\mbox{Hess }f||^2 - \lambda |\grad f|^2+\textup{Ric}(\grad f)
\end{align*} 
The curvature assumption can also be expressed as $\textup{Ric}_x(v) \ge K|v|^2$ for all $v\in T_xM$.
By applying Proposition \ref{HL} and this curvature assumption to Bochner's formula above, we obtain
\begin{align*}
\frac{1}{2}\Delta |\grad f|^2 &\ge  \frac{1}{n}(\Delta f)^2 -\lambda |\grad f|^2+K|\grad f|^2\\
&=\frac{\lambda^2}{n}f^2 + (K-\lambda)|\grad f|^2\\
&=\frac{\lambda^2}{n}f^2 + (K-\lambda)(\frac{1}{2}\Delta f^2+\lambda f^2)
\end{align*} 
\noindent where the last equality is the product rule : $\Delta f^2 =2|\grad f|^2+2f\Delta f$.\\\\
Integrate the above inequality over $M$, and use the fact that $\int\limits_M \Delta |\grad f|^2=0$ and $\int\limits_M \Delta f^2=0$ by the Divergence Theorem (since $M$ has no boundary). We then have $$0\ge \frac{\lambda^2}{n} +(K-\lambda)\lambda.$$

\noindent In particular, for $\lambda_1>0$, we have $\lambda_1 \ge  \frac{n}{n-1}K$ as desired.
\end{proof}

\section{Bonnet-Myers theorem} \label{BMsec}

Bonnet-Myers theorem is a classical theorem in Riemannian geometry. It states that a connected and complete manifold with Ricci curvature bounded below by a positive number must be compact. It is the main theorem of our paper that we will discuss about in all of the discrete curvature notions in later chapters.
\begin{theorem}[Bonnet-Myers] \label{BM}
Let $(M^n,g)$ be a connected and complete Riemannian manifold. Suppose there is a constant $r>0$ such that the Ricci curvature satisfies $$Ric_x(v)\ge \frac{n-1}{r^2}>0$$ for all $x\in M$ and $v\in S_x(M)$. Then $M$ is compact and its diameter $\textup{diam}(M)\le \pi r$.
\end{theorem}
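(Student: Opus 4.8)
The plan is to establish the diameter estimate first and then extract compactness from it, since the bound $\textup{diam}(M)\le \pi r$ is the substantive part and compactness follows almost immediately. The diameter estimate will come from showing that no length-minimizing geodesic can be longer than $\pi r$. So I would begin by fixing two points $p,q\in M$ and, using completeness through the Hopf--Rinow theorem, a unit-speed minimizing geodesic $\gamma:[0,L]\to M$ joining them with $L=d(p,q)$. Everything then reduces to proving $L\le \pi r$.

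The central tool is the second variation of arc length (the index form) together with the fact that a minimizing geodesic has non-negative index form for every variation that vanishes at the endpoints. I would extend $\gamma'(0)$ to an orthonormal basis and parallel-transport it to obtain a parallel orthonormal frame $e_1(t),\dots,e_{n-1}(t),\,e_n(t)=\gamma'(t)$ along $\gamma$, and for each $i=1,\dots,n-1$ take the \emph{normal} variation field $V_i(t)=\sin\!\big(\tfrac{\pi t}{L}\big)\,e_i(t)$, which vanishes at both endpoints. Because the $e_i$ are parallel and perpendicular to $\gamma'$, the second variation of length collapses to the clean index form
$$I(V_i,V_i)=\int_0^L\left(\tfrac{\pi^2}{L^2}\cos^2\!\big(\tfrac{\pi t}{L}\big)-\sin^2\!\big(\tfrac{\pi t}{L}\big)\,\langle R(e_i,\gamma')\gamma',e_i\rangle\right)dt.$$

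The key step --- and the reason the hypothesis is phrased via Ricci rather than sectional curvature --- is to sum these $n-1$ expressions. Summing the curvature integrands over $i$ yields exactly $\sum_{i=1}^{n-1}\langle R(e_i,\gamma')\gamma',e_i\rangle=\textup{Ric}(\gamma')$, at which point the assumption $\textup{Ric}(\gamma')\ge (n-1)/r^2$ applies. Using $\int_0^L\cos^2(\pi t/L)\,dt=\int_0^L\sin^2(\pi t/L)\,dt=L/2$, the sum is bounded above by $(n-1)\tfrac{L}{2}\big(\tfrac{\pi^2}{L^2}-\tfrac{1}{r^2}\big)$. Minimality forces $I(V_i,V_i)\ge 0$ for each $i$, so the sum is non-negative; combining the two estimates gives $\tfrac{\pi^2}{L^2}\ge\tfrac{1}{r^2}$, that is $L\le\pi r$. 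As $p,q$ were arbitrary, $\textup{diam}(M)\le\pi r$. Finally, $M$ now has finite diameter, hence equals the closed ball $\bar B(p,\pi r)$ for any fixed $p$; since $M$ is complete, Hopf--Rinow makes closed bounded subsets compact, so $M$ itself is compact.

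The main obstacle I expect is not any single calculation but rather supplying the two analytic ingredients the excerpt has not yet developed: the second variation formula for arc length and the fact that a minimizing geodesic renders the index form positive semidefinite. Once these are granted, the argument is a short computation. The one genuinely clever move is the choice of the $\sin(\pi t/L)$ weight, tuned precisely so that the cosine and sine integrals coincide and the resulting comparison with $1/r^2$ is sharp.
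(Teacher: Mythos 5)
Your proposal is correct and follows essentially the same route as the paper: a parallel orthonormal frame along a minimizing geodesic, variation fields weighted by $\sin(\pi t/L)$, summation over the $n-1$ normal directions to convert sectional into Ricci curvature, and non-negativity of the second variation from minimality. The only cosmetic difference is that you phrase the second variation in terms of arc length (the index form) while the paper uses the energy functional; for unit-speed minimizing geodesics with normal variation fields these give the same inequality.
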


One way to prove this theorem is to apply the second variation formula of the energy, as presented by Carmo \cite[pp.~191--201]{Carmo}).

\begin{proof}
Let $p,q\in M$ be two arbitrary points in $M$. By Hopf-Rinow theorem (see \cite[pp.~145--148]{Carmo}), there exists a minimal unit-speed geodesic $c:[0,a]\rightarrow M$ joining $x$ and $y$, that is $c(0)=p$, $c(a)=q$, $|c'(t)|=1$ for all $t$, and $d(p,q)=\ell(c)=a$. It suffices to prove that $a\le \pi r$, because we can then conclude $\textup{diam}(M)\le \pi r$ and the compactness of $M$ (from its being complete and bounded).

First, we will construct proper variations of $c$ as follows. Choose unit vectors $e_1,e_2,...,e_{n-1}$ in $T_p(M)$ such that they, together with $c'(0)$, form an orthonormal basis of $T_pM$. For each $1\le i \le n-1$, let $V_i$ be a parallel vector field along $c$ such that $V_i(0)=e_i$. Note that $$\frac{d}{dt} \langle V_i(t), V_j(t) \rangle = \langle \frac{D}{dt} V_i(t), V_j(t)\rangle + \langle V_i(t), \frac{D}{dt}  V_j(t)\rangle =0$$
\noindent since $\frac{D}{dt}  V_i(t)= \frac{D}{dt}  V_j(t)=0$ from being parallel. It means $\langle V_i(t), V_j(t) \rangle$ is constant and $\langle V_i(t), V_j(t) \rangle= \langle e_i, e_j \rangle = \delta_{ij}$. 

For each $i$, define $X_i(t):=\sin(\frac{\pi t}{a})V_i(t)$, and let $F_i:(-\varepsilon,\varepsilon)\times [0,a]\rightarrow M$ be a variation of $c$ whose variational vector field is $X_i$, that is 
$$F_i(0,t)=c(t) \ \ \ \ \ \mbox{ and } \ \ \ \ \ \frac{\partial}{\partial s} F_i(s,t)=X_i(t).$$
Since $X_i(0)=X_i(a)=0$, it means that for every $s\in(-\varepsilon,\varepsilon)$ the curve $F_i(s, -)$ has the same endpoints as the curve $c$. In other words, $F_i$ is a proper variation of $c$.

\noindent The energy for the curve $F_i(s, -)$ is defined by $$E_i(s):=\frac{1}{2}\int\limits_0^a \bigg\|\frac{d}{dt}F_i(s,t)\bigg\|^2 dt.$$
The second variation formula of energy states that 
\begin{align} \label{SVF}
E_i''(0)&=\int\limits_0^a\Big|\frac{D}{dt}X_i(t)\Big|^2-\Big\langle X_i(t), R\Big(c'(t),X_i(t)\Big)c'(t)\Big\rangle dt \nonumber \\
&=\int\limits_0^a\Big|\frac{\pi}{a}\cos(\frac{\pi t}{a})V_i(t)\Big|^2-\sin^2(\frac{\pi t}{a})\Big\langle V_i(t), R\Big(c'(t),V_i(t)\Big)c'(t)\Big\rangle dt \nonumber \\
&=\int\limits_0^a \frac{\pi^2}{a^2}\cos^2(\frac{\pi t}{a})-\sin^2(\frac{\pi t}{a}) K(c'(t),V_i(t)) dt 
\end{align}
\noindent where $K(c'(t),V_i(t))$ is sectional curvature of the two-dimensional plane spanned by $c'(t)$ and $V_i(t)\}$.\\\\
Summing the equation \eqref{SVF} over index $i$ and using the fact that $$\sum\limits_{i=1}^{n-1}K(c'(t),V_i(t))=Ric(c'(t))\ge \frac{n-1}{r^2},$$ we then have
\begin{align} \label{sume}
\sum\limits_{i=1}^{n-1} E_i''(0) &\le (n-1)\int\limits_0^a \frac{\pi^2}{a^2}\cos^2(\frac{\pi t}{a})-\frac{1}{r^2}\sin^2(\frac{\pi t}{a}) dt \nonumber \\
&=(n-1)\bigg(\frac{\pi^2}{a^2}-\frac{1}{r^2}\bigg)\frac{a}{2}
\end{align}

Since $c$ is minimal geodesic (with constant speed), its energy $E_i(0)$ is minimum among $E_i(s)$, $s\in(-\varepsilon,\varepsilon)$. Hence $E_i''(0)\ge 0$, true for every $i$. The relation \eqref{sume} then implies $a\le \pi r$ as desired.
\end{proof}

\begin{remark} The diameter bound $\textup{diam}(M)\le \pi r$ is sharp for the round sphere $S^n_r:=\{x\in \mathbb{R}^{n+1}: \|x\|=r\}$. More importantly, the only manifolds for which the bound is sharp are the ones that are isometric to the round sphere $S^n_r$; this result is known as \textit{Cheng's rigidity result} (see \cite{Cheng}).
\end{remark}

\section{Average distance between two balls} \label{avgsec}
 In \cite{Ollivier}, Ollivier suggests that for two points $x$ and $y$ of a manifold, the average distance between small balls centered at $x$ and at $y$ can be greater or smaller than the distance $d(x,y)$ depending on the Ricci curvature. This statement can be explained more precisely as follows.

Let $(M^n,g)$ be a connected and complete manifold. Let $x$ and $y$ be two points in $M$. By Hopf-Rinow theorem, completeness of $M$ implies that there exists a minimal geodesic $c$ joining $x$ and $y$. Further assume that $c$ is unit speed, so $c$ can be parametrized as $c:[0,\delta]\rightarrow M$ with $c(0)=x$, $c(\delta)=q$, $|c'(t)|=1$ for all $t$, and hence $d(x,y)=\ell(c)=\delta$. Let $v=c'(0)\in S_xM$ be the initial unit velocity of curve $c$.  Define $B_r(x)=\{z\in M:\ d(x,z)\le r\}$ to be the ball of a small radius $r$ around $x$, and define $B_r(y)$ similarly. 

%We are interested in finding the average distance between corresponding points in $B_r(x)$ and $B_r(y)$. One might guess that the answer is $\delta$, which is true if the manifold is flat (i.e. having zero curvature everywhere). However, a more correct answer is given in terms of curvatures, explained as follows.

For each point $x'\in B_r(x)$, let $d(x',x)=:\varepsilon \le r$. We can write $x'=\textup{exp}_x(\varepsilon w)=c_w(\epsilon)$, which means $x'$ can be reached by the unit-speed geodesic $c_w$, starting from $x$ with the initial unit velocity $w \in S_xM$ and travelling for a period of time $\varepsilon$. Consider $w':=P_c^\delta(w)$, the parallel transport of $w$ along the curve $c$ for a period of time $\delta$. Thus $w'\in S_yM$.
Then $y'\in B_r(y)$ is a corresponding point of $x'\in B_r(x)$, given by $y':= \textup{exp}_y(\varepsilon w').$ See Figure \ref{fig:dxy}. The first task is to estimate the distance $d(x',y')$, and the second task is to derive the average distance of $\overline{d}(B_r(x),B_r(y))$ by averaging over all $x'\in B_r(x)$. 

\begin{figure}[h] 
\centering
\includegraphics[width= 12cm]{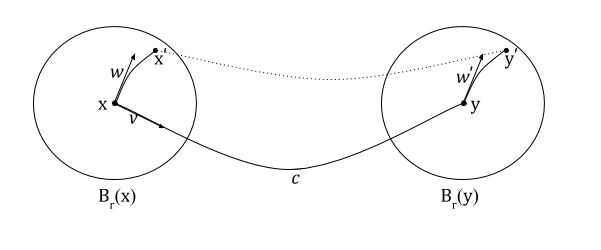}
\caption{Correspondence between $x'$ and $y'$}
\label{fig:dxy}
\end{figure}

\begin{proposition}
In the above setting (with further assumption that $w\perp v$), the distance between $x'$ and $y'$ is estimated by
\begin{align*}
d(x',y')\le \delta\Big(1-\varepsilon^2K(v,w)+O(\varepsilon^3+\varepsilon^2\delta)\Big)
\end{align*}
\noindent where $K(v,w)$ is the sectional curvature of the two-dimensional plane spanned by $\{v,w\}$.
\end{proposition}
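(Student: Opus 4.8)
The plan is to bound $d(x',y')$ from above by the length of an explicit curve joining $x'$ to $y'$, and then to Taylor-expand that length in the small parameter $\varepsilon$. The natural competitor curve is obtained by sliding the short geodesic from $x$ to $x'$ along $c$ by parallel transport. Concretely, let $W$ be the parallel vector field along $c$ with $W(0)=w$, so that $W(\delta)=w'$ and $|W(t)|=1$ for all $t$; define the two-parameter map
$$f(s,t):=\exp_{c(t)}\big(s\,W(t)\big),\qquad (s,t)\in[0,\varepsilon]\times[0,\delta].$$
For $r$ (hence $\varepsilon,\delta$) small enough that $\exp$ is a diffeomorphism on the relevant balls, $f$ is smooth, and the curve $t\mapsto f(\varepsilon,t)$ joins $f(\varepsilon,0)=\exp_x(\varepsilon w)=x'$ to $f(\varepsilon,\delta)=\exp_y(\varepsilon w')=y'$. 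Hence $d(x',y')\le \int_0^\delta \big|\partial_t f(\varepsilon,t)\big|\,dt$, and everything reduces to estimating the integrand.

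For each fixed $t$, the curve $s\mapsto f(s,t)$ is the geodesic with initial point $c(t)$ and initial velocity $W(t)$, so $\{f(\cdot,t)\}_t$ is a variation through geodesics and its variation field $J_t(s):=\partial_t f(s,t)$ is a Jacobi field along that geodesic. Its initial data are $J_t(0)=c'(t)$ and, using torsion-freeness of $\nabla$ together with $\tfrac{D}{dt}W(t)=0$, the value $\tfrac{D}{ds}J_t(0)=\tfrac{D}{dt}\partial_s f(0,t)=\tfrac{D}{dt}W(t)=0$. I would also record that $\langle c'(t),W(t)\rangle$ is constant along $c$ (both fields being parallel) and equals $\langle v,w\rangle=0$; thus $c'(t)\perp W(t)$ and both are unit vectors, so the sectional-curvature normalisation is clean: $K\big(c'(t),W(t)\big)=\langle R(c'(t),W(t))W(t),c'(t)\rangle$.

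Next I would Taylor-expand the speed $|J_t(s)|$ at $s=0$. Writing $g(s)=\langle J_t,J_t\rangle$, we have $g(0)=|c'(t)|^2=1$ and $g'(0)=2\langle J_t'(0),J_t(0)\rangle=0$, while the Jacobi equation gives $J_t''(0)=-R(c'(t),W(t))W(t)$, so $g''(0)=2\langle J_t''(0),J_t(0)\rangle=-2K(c'(t),W(t))$. Taking square roots yields
$$\big|J_t(\varepsilon)\big|=1-\tfrac12\,K\big(c'(t),W(t)\big)\,\varepsilon^2+O(\varepsilon^3),$$
the coefficient $\tfrac12$ being exactly the one produced by the cosine-type Jacobi field $\cos(\sqrt{K}\,s)$ in the constant-curvature model. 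Integrating over $t\in[0,\delta]$ gives $d(x',y')\le \delta-\tfrac{\varepsilon^2}{2}\int_0^\delta K(c'(t),W(t))\,dt+O(\varepsilon^3\delta)$. Finally, since the curvature tensor is smooth and the plane $\operatorname{span}(c'(t),W(t))$ is the parallel transport of $\operatorname{span}(v,w)$, one has $K(c'(t),W(t))=K(v,w)+O(t)$, whence $\int_0^\delta K(c'(t),W(t))\,dt=\delta\,K(v,w)+O(\delta^2)$; absorbing this error into the $O(\varepsilon^2\delta)$ term delivers a bound of exactly the claimed shape, $d(x',y')\le\delta\big(1-\varepsilon^2K(v,w)+O(\varepsilon^3+\varepsilon^2\delta)\big)$ (up to the numerical constant in front of $\varepsilon^2 K$, which depends only on the normalisation conventions).

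The main obstacle is not any single deduction but the uniform bookkeeping of the error terms. In particular: (i) the $O(\varepsilon^3)$ remainder in the expansion of $|J_t(\varepsilon)|$ must be controlled uniformly in $t\in[0,\delta]$, which follows from compactness of $c([0,\delta])$ and smoothness of the data but should be stated explicitly; (ii) the replacement of $\int_0^\delta K(c'(t),W(t))\,dt$ by $\delta K(v,w)$ requires a genuine Taylor estimate of the sectional curvature along $c$, which is what generates the $\varepsilon^2\delta$ error; and (iii) one must ensure the construction is legitimate, i.e. choose $r$ small enough (using completeness and, for small $\delta$, the absence of conjugate points along the short geodesics) that $f$ is a smooth map and $t\mapsto f(\varepsilon,t)$ is an admissible competitor in the distance infimum. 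The hypothesis $w\perp v$ enters precisely to keep $c'(t)\perp W(t)$, so that the coefficient of $\varepsilon^2$ is the bare sectional curvature rather than a curvature-times-$\sin^2$ factor.
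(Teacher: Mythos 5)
Your proposal follows essentially the same route as the paper: the same geodesic variation $\exp_{c(\cdot)}(s\,W(\cdot))$ built by parallel-transporting $w$ along $c$, the same identification of the transverse velocity with a Jacobi field, and the same second-order Taylor expansion of its norm via the Jacobi equation, followed by integration over the base parameter. If anything you are more careful at the final step: you take the square root of $|J_t(\varepsilon)|^2$ and obtain the coefficient $\tfrac{1}{2}$ in front of $\varepsilon^2 K(v,w)$ (matching Ollivier's original Proposition 6), whereas the paper integrates $\|\gamma'\|^2$ rather than $\|\gamma'\|$ and so loses that factor --- a normalisation discrepancy you correctly flag.
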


\begin{proof}
For $s\in[0,\delta]$, let $v_s:=\frac{d}{ds}c(s)$ be the velocity of the curve $c$ at time $s$, and let $w_s:=P_c^s(w)$ be the parallel transport of $w$ along the curve $c$ for a period of time $s$. Therefore, $v_s,w_s\in S_{c(s)}M$ for all $s\in [0,\delta]$, and $\langle v_s, w_s \rangle$ is constant in $s\in[0,\delta]$. Moreover, $v_0=v$, $w_0=w$, and $w_\delta=w'$.

Consider $F:[0,\delta]\times[0,\varepsilon]\rightarrow M$ a geodesic variation defined by $$F(s,t)=c_s(t):= \textup{exp}_{c(s)}(tw_s).$$
\noindent i.e. $F(s,-)=c_s$ is a geodesic for every $s\in[0,\delta]$. For a fixed $s_0$, let $J_{s_0}$ be a variational vector field associated to the variation $F$ of the geodesic $c_{s_0}$, that is $$\frac{\partial}{\partial s}F(s_0,t)=J_{s_0}(t).$$
Hence, $J_{s_0}$ is a Jacobi field along $c_{s_0}$ and satisfies the Jacobi equation $$J_{s_0}''(t)+R(c'_{s_0}(t), J_{s_0}(t))c'_{s_0}(t)=0$$ \noindent where $J_{s_0}'(t)=\frac{D}{dt}J_{s_0}(t)=\nabla_{c'_{s_0}(t)} J_{s_0}$ and $J_{s_0}''(t)=\frac{D^2}{dt^2}J_{s_0}(t)=\nabla_{c'_{s_0}(t)}(\nabla_{c'_{s_0}(t)} J_{s_0})$.\\

Let $\gamma:[0,\delta]\rightarrow M$ be the curve $\gamma(s):=c_s(\varepsilon)$ from $\gamma(0)=x'$ to $\gamma(\delta)=y'$. We aim to compute the length of $\gamma$ which then becomes an upper bound for the distance $d(x',y')$. First, note that $$\gamma'(s_0)=\frac{d}{ds}\bigg|_{s_0} \gamma(s)=\frac{d}{ds}\bigg|_{s_0} F(s,\varepsilon)=J_{s_0}(\varepsilon).$$
Choose $f(t)=\|J_{s_0}(t)\|^2$, then it follows that $$\|\gamma'(s_0)\|^2=f(\varepsilon)=f(0)+\varepsilon f'(0)+\frac{\varepsilon^2}{2}f''(0)+O(\varepsilon^3)$$
\noindent The terms $f(0)$, $f'(0)$, and $f''(0)$ can be calculated as follows:
\begin{itemize}
\item $\displaystyle f(0)=\|J_{s_0}(0)\|^2=\|c'(s_0)\|^2=1.$

\item $\displaystyle f'(0)=\frac{d}{dt}\bigg|_{t=0} \langle J_{s_0}(t), J_{s_0}(t) \rangle =2\langle J'_{s_0}(0), J_{s_0}(t)\rangle =0$ because
\begin{align*}
J'_{s_0}(0)= \frac{D}{dt}\bigg|_{t=0} J_{s_0}(t) &= \frac{D}{dt}\bigg|_{t=0} \frac{d}{ds}\bigg|_{s_0} F(s,t)\\
&= \frac{D}{ds}\bigg|_{s_0} \frac{d}{dt}\bigg|_{t=0} F(s,t) &&\text{(symmetry lemma)}\\
&= \frac{D}{ds}\bigg|_{s_0} w_s=0 &&\text{($w_s$ is parallel along $c$)}
\end{align*}

\item $ \displaystyle
\begin{aligned}[t]
\frac{1}{2}f''(0)&=\|J'_{s_0}(0)\|^2 +\langle J_{s_0}(0), J''_{s_0}(0)\rangle \\
&= 0 - \langle J_{s_0}(0), R(c'_{s_0}(0), J_{s_0}(0))c'_{s_0}(0) \rangle  &&\text{(Jacobi equation)}\\
&= - \langle v_{s_0}, R(w_{s_0}, v_{s_0}) w_{s_0}\rangle\\
&= - \langle R(w,v)w, v\rangle +O(\delta)
\end{aligned}
$ \\ where the last equality holds true by a linear approximation of a continuously differentiable function $A(s):= \langle v_{s}, R(w_{s}, v_{s}) w_{s}\rangle$ around $s=0$.
\end{itemize}

\noindent Therefore,
\begin{align*}
\|\gamma'(s_0)\|^2&=f(\varepsilon)=f(0)+\varepsilon f'(0)+\frac{\varepsilon^2}{2}f''(0)+O(\varepsilon^3)\\
&=1-\varepsilon^2\langle R(w,v)w, v\rangle + O(\varepsilon^3+\varepsilon^2\delta)\\
&=1-\varepsilon^2K(v,w) + O(\varepsilon^3+\varepsilon^2\delta)
\end{align*}
under the assumption that $\langle v,w\rangle=0$.\\
Hence, the length of $\gamma$ is 
$$\int\limits_0^\delta \|\gamma'(s_0)\|^2 ds=\delta(1-\varepsilon^2\langle R(w,v)w, v\rangle + O(\varepsilon^3+\varepsilon^2\delta))$$ which yields the proposition.
\end{proof}

In fact, the inequality sign in the proposition can be replaced by the equality, as $\varepsilon \rightarrow 0$ and $\delta\rightarrow 0$ (see Proposition 6 in \cite{Ollivier}). Moreover, the averaging procedure as discussed in \cite[pp.~58]{Ollivier} yields the average distance between $B_r(x)$ and $B_r(y)$:

\begin{align} \label{avg_dist}
\overline d(B_r(x),B_r(y)) =
\delta\Big(1-\frac{r^2}{(N+2)}\textup{Ric}_x(v) + O(r^3+r^2\delta) \Big).
\end{align}

%$W_1(\mu^*_x,\mu^*_y)$ the minimal cost of transport plan from $\mu^*_x$, the uniform distribution in the ball $B_r(x)$ to $\mu^*_y$, the uniform distribution in the ball $B_r(y)$, as $r\rightarrow 0$ and $\delta\rightarrow 0$. 

\newpage
\section{Examples of graphs and manifolds} \label{expsec}
We introduce three examples of graphs that represent different classes of manifolds, and calculate curvature in Bakry-\'Emery and Ollivier's Ricci notations. Definition of graphs can be found at the beginning of Section 2.1, and details of these two curvature notations are provided in Chapter 3 and 4. We then verify that the nature of curvatures in such graphs correspond to the manifolds they represent. Here, the curvature calculation is performed in Graph Curvature Calculator written by Stagg and Cushing (see \cite{Graphcal} and the website \href{http://www.mas.ncl.ac.uk/graph-curvature/}{http://www.mas.ncl.ac.uk/graph-curvature/}), in the setting of ``normalized laplacian (with $\infty$ dimension)'' for Bakry-\'Emery curvature, and ``Lin-Lu-Yau'' for Ollivier's Ricci curvature. 
\begin{example}
The \textit{hypercube} graph $Q^n$ is the graph formed by vertices and edges of the $n$-dimensional hypercube. There are in total $2^n$ vertices of the form $x=(x_1,...,x_n)$ with each $x_i\in \{0,1\}$. An edge connects vertices $x$ and $y$ if and only if their coordinates differ by exactly one digit. The hypercube graph $Q^n$ represents the round sphere $S^{n-1}=\{x\in \mathbb{R}^{n}: \|x\|=1\}$. Similar to the round sphere, the hypercube graph has positive constant curvature everywhere, as shown in Figure \ref{fig:hypercube}
\begin{figure}[h]
\centering
\includegraphics[width= 14cm]{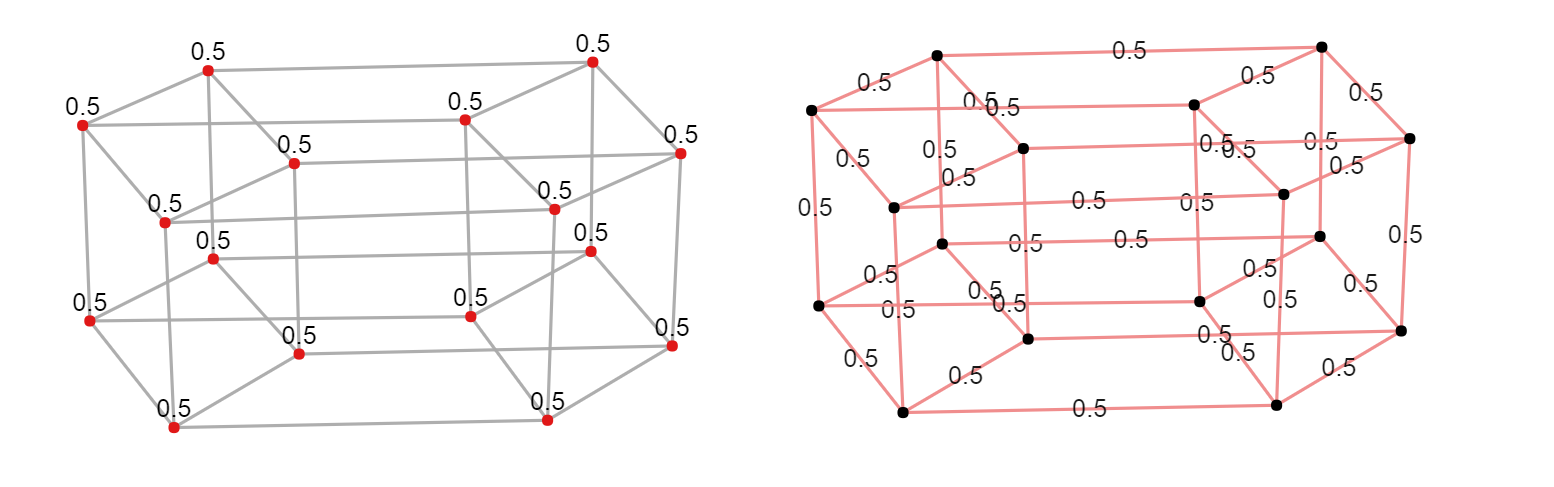}
\caption{Bakry-\'Emery and Ollivier's Ricci curvature of the hypercube graph $Q^4$}
\label{fig:hypercube}
\end{figure}
\end{example}

\begin{example}
The \textit{antitree} graph is the infinite graph constructed by placing complete graphs $K_n$, $n=1,2,3,...$ (in an increasing order of $n$) and connecting every vertex of $K_i$ to every vertex of $K_{i+1}$ for all $i\in \mathbb{N}$. The antitree graph represents a (elliptic) paraboloid. A paraboloid is a manifold with positive curvature everywhere, but its curvature is reaching zero at a point further away from the paraboloid's vertex. It is good to note that Bonnet-Myers theorem does not apply, and a paraboloid is indeed non-compact. As shown in Figure \ref{fig:antitree}, the curvature of the antitree is calculated to be \{0.5, 0.212, 0.092, 0.049, ...\} in Bakry-\'Emery curvature and \{0.6, 0.15, 0.068, 0.039, ...\} in Ollivier's Ricci curvature. This calculation suggests evidentially that the antitree is an infinite graph whose curvature is also reaching zero. A precise formula to calculate curvature of a generalized family of antitrees can be found in \cite{CCantitree}.

\begin{figure}[h] 
\centering
\includegraphics[width= 12cm]{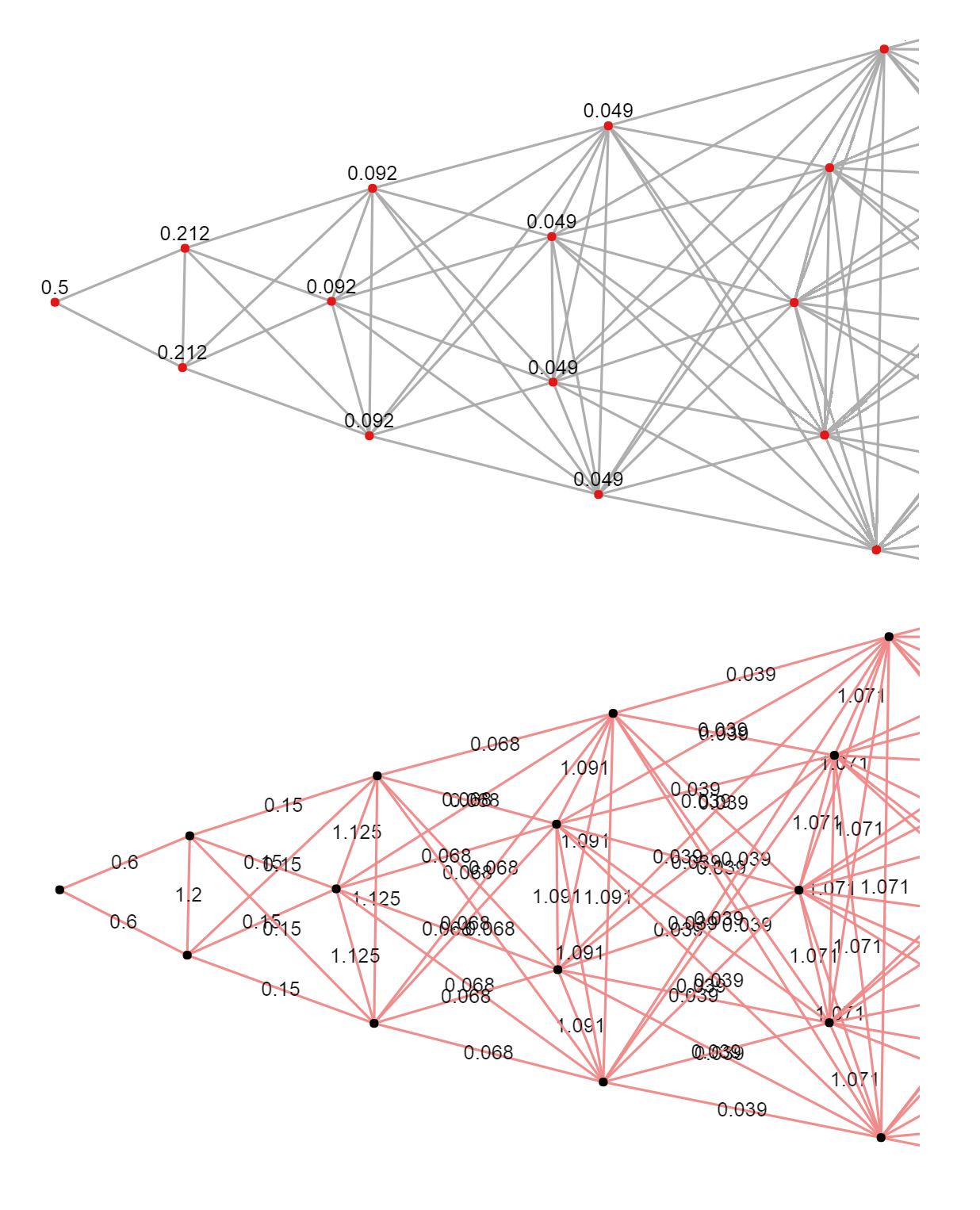}
\caption{Bakry-\'Emery and Ollivier's Ricci curvature of the antitree graph}
\label{fig:antitree}
\end{figure}
\end{example}

\newpage
\begin{example}
A \textit{dumbbell} graph is a graph obtained by connecting two complete graphs $K_n$ and $K_m$ with a single edge. Such edge represents a ``bottleneck'' of a manifold. In general, a bottleneck of a manifold is negatively curved (i.e. in a saddle shape), and as expected, a dumbbell graph also has negative curvature around its bottleneck, as shown in Figure \ref{fig:dumbbell}.
\begin{figure}[h] 
\centering
\includegraphics[width= 14cm]{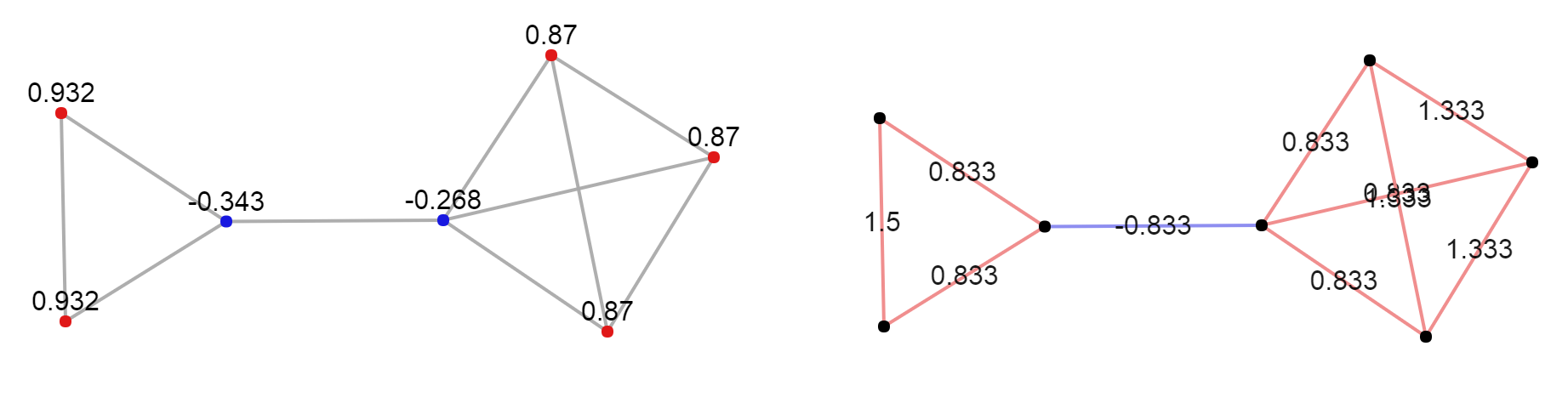}
\caption{Bakry-\'Emery and Ollivier's Ricci curvature of a dumbbell graph}
\label{fig:dumbbell}
\end{figure}
\end{example}

%******************************************************************************************

\chapter{Combinatorial Curvature} \label{CCchap}

%Gauss-Bonnet Theorem $\int_{M} K \textup{d}\lambda = 2\pi \chi(M)$ involves Euler's characteristic, which suggests the triangulation of two-dimensional surfaces into small triangles, or more generally polygons. A reverse approach is to start from \textit{tessellations} (which are, roughly speaking, tilings of polygons) and realize them as discrete surfaces. 

The idea behind Gauss-Bonnet Theorem comes from a relation between the sum of interior angles of a triangle (formed by three geodesics) on a surface and the total curvature inside that triangle (see \cite{Gauss}). 
When a surface is triangulated (i.e. partitioned into small triangles, or polygons), it resembles planar graphs. Gaussian curvature, which explains angles on surfaces, is translated into combinatorial curvature, which explains ``angles'' in planar graphs. Hence these two curvature notions describe the geometry of surfaces very similarly.

\section{Planar tessellations}

We shall start with the definition of graphs. A graph $G$, written as $G=(V,E)$ consists of a set $V$ of elements called \textit{vertices} (singular: vertex), and a set $E$ whose elements are \textit{edge}, each of which connects a pair of vertices (called endpoints of an edge). Throughout this paper, we assume graphs to be \textit{undirected}, which means edges have no direction, and to be \textit{simple}, which means they contain no loop (i.e. an edge whose endpoints are the same vertex) and no multiple edges (i.e. more than one edges sharing the same pair of endpoints). When vertices $u$ and $v$ are connected by one (and only) edge $e$ in $E$, we may say that $u$ is adjacent to $v$ (written as $u\sim v$ or $u\stackrel{e}{\sim} v$). For a vertex $v\in V$, the degree of $v$, denoted by $d_v$, is the number of vertices that are adjacent to $v$. A (finite) path is a sequence of (finite) edges which connect a sequence of all distinct vertices (except possibly the first and the last):
$$v_1\stackrel{e_1}{\sim} v_2\stackrel{e_2}{\sim} v_3\stackrel{e_3}{\sim}...\stackrel{e_{n-1}}{\sim} v_{n}.$$

The length of a path is the number of edges in its sequence. For two vertices $u$ and $v$, the combinatorial distance function $d(u,v)$ is defined to be the length of shortest path connecting $u$ and $v$. By convention, set $d(u,u)=0$ for all vertices $u$, and set $d(u,v)=\infty$ if there is no path connecting $u$ and $v$. Moreover, a graph is said to be \textit{connected} if, for every pair of vertices $u$ and $v$, there exists a path connecting $u$ and $v$.\\

In the setup of combinatorial curvature, graphs are required to be $\it{planar}$, so that the notion of \textit{faces} can be introduced. A planar graph is a graph $G=(V,E)$ that can be embedded in $\mathbb{R}^2$ without self intersecting edges. The union of edges $\bigcup_{e\in E} e$, when realized in $\mathbb{R}^2$, divides the entire space $\mathbb{R}^2$ into connected components. The closure of each component in $\mathbb{R}^2\backslash \bigcup_{e\in E} e$ is called a $\it face$. Let $F$ be the set of all faces, so we may consider it as an additional structure of a planar graph $G$: $G=(V,E,F)$. We further assume graphs to be \textit{locally finite}. A planar graph $G$ is locally finite if every point of $\mathbb{R}^2$ has an open neighborhood intersecting only finitely many faces of $G$. Local finiteness prevents graphs from clustering in arbitrarily small area.

In \cite{BP}, O. Baues and N. Peyerimhoff define conditions for locally finite planar graphs to be \textit{tessellations} as follows.

\begin{definition}[Tessellation]
A connected and locally finite planar graph $G$ is a \textit{planar tessellation}, or just \textit{tessellation}, if it satisfies the following conditions.

\begin{itemize}
\item[(i)] Every edge is contained in exactly two different faces.
\item[(ii)] Every bounded face is a \textit{polygon}: it is homeomorphic to the closed disk $\overline {\mathbb{D}}$, and its boundary is a simple cycle (i.e. a finite path in which the first and the last vertices coincide). The edges of the cycle are called \textit{sides} of the polygon.
\item[(iii)] The intersection of any two distinct faces are either an empty set, or a vertex, or an edge.   
\end{itemize}
\end{definition}

Condition (iii) suggests the convexity property for polygons. Figure \ref{fig:nontess} shows two examples where the condition (iii) breaks. 

\begin{figure}[h] 
\centering
\includegraphics[width= 10cm]{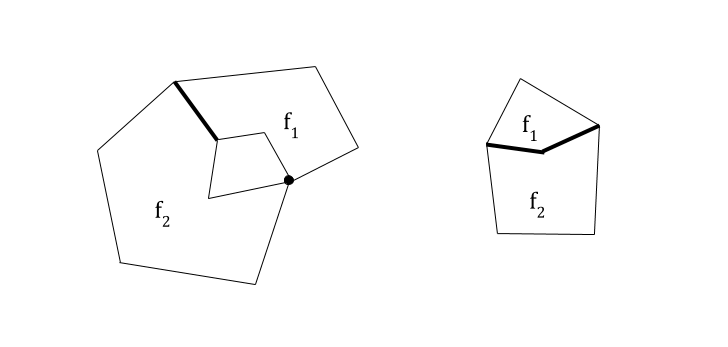}
\caption{Polygonal faces violating tessellation rule}
\label{fig:nontess}
\end{figure}

\begin{remark} \label{tess}
There are two cases of planar tessellations that we are interested in. First is an \textit{infinite tessellation}: it contains infinitely many faces, and every face is bounded. Second is a \textit{finite tessellation}: it contains exactly one unbounded face, which is homeomorphic to $\mathbb{R}^2\backslash \mathbb{D}$ and its boundary is a simple cycle. In case of a finite tessellation, by realizing $\mathbb{R}^2\cup \{\infty\}$ as a 2-dimensional sphere $S^2$, the unbounded face can be viewed as a bounded polygon (containing the point at infinity). Thus, in fact, a finite planar can be realized as a finite tessellation of $S^2$.
\end{remark}

For a face $f\in F$ of a tessellation, let $d_f$ denote its degree: the number of vertices (or equivalently, the number of sides) of the boundary of polygon $f$. The conditions on tessellations imply that $3\le d_v<\infty$, and $3\le d_f<\infty$ for all $v\in V$, $f\in F$.

Two combinatorial curvature notations are defined as follows.
\begin{definition}[Combinatorial curvature]
For each vertex $v$ and a face $f$ having $v$ as its vertex (written as $f\sim v$), the corner curvature is defined as
$$\kappa (v,f)= \frac{1}{d_v}+\frac{1}{d_f}-\frac{1}{2}.$$
For a vertex $v$, the (vertex) curvature is
$$\kappa (v)=\sum\limits_{f:f\sim v}\kappa (v,f).$$\\
summed over all faces $f$ having $v$ as their vertex. 
\end{definition}
In fact, for a fixed vertex $v$, the number of faces $f$ incident to $v$ is equal to  its degree $|v|$. Hence, the (vertex) curvature can be defined in another way as
$$\kappa (v)
=\sum\limits_{f:f\sim v} \left(\frac{1}{d_v}+\frac{1}{d_f}-\frac{1}{2}\right)= 1-\frac{d_v}{2}+\sum\limits_{f:f\sim v}\frac{1}{d_f}$$

The motivation behind this definition of curvature is ``angular defect'', which can be explained as follows. If each face $f$ were to be realized as a regular polygon of equal side length, the inner angle of polygon $f$ would be $(1-\frac{2}{d_f})\pi$ and the sum of angles of all faces $f$'s at the vertex $v$ would then be 
$$\sum\limits_{f:f\sim v} \bigg(1-\frac{2}{d_f}\bigg)\pi = 2\pi \bigg(\frac{d_v}{2}-\sum\limits_{f:f\sim v}\frac{1}{d_f}\bigg)= 2\pi (1-\kappa(v)).$$

If $\kappa(v)<0$, then the sum of angles at $v$ is more than $2\pi$, which means that these polygonal faces around $v$ form a saddle-shape surface around $v$. On the other hand, when $\kappa(v)>0$, the sum of angles at $v$ is less than $2\pi$, and therefore the point $v$ behaves like an elliptical point. In other words, the sign of $\kappa(v)$ (negative/zero/positive) corresponds to the geometry of the surface at point $v$ (hyperbolic/euclidean/spherical).

\section{Combinatorial Gauss-Bonnet and Cartan-Hadamard}

In Riemannian geometry, Gauss-Bonnet theorem states that for a closed surface $M$, the total curvature of $S$ can be related to its Euler's characteristic by the formula (see Section \ref{GBCC}): $$\int_{M} K\ \textup{d}A =2\pi \chi(M).$$  
In a case when $G$ is a finite planar tessellation, or equivalently a finite tessellation of $S^2$ (see Remark \ref{tess}), the Euler's characteristic of $G$ is given by $\chi(G)=\chi(S^2)=2$. Gauss-Bonnet theorem has the following discrete analogue for a finite planar tessellation.

\begin{theorem}[Combinatorial Gauss-Bonnet]
Let $G=(V,E,F)$ be a finite planar tessellation. Then $\sum\limits_{v\in V}\kappa(v)=2$.
\end{theorem}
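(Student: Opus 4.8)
The plan is to reduce the statement to Euler's formula $|V| - |E| + |F| = 2$ on the sphere by a direct double-counting of the vertex curvatures. I would start from the reformulation of the curvature recorded just above the theorem, namely
$$\kappa(v) = 1 - \frac{d_v}{2} + \sum_{f : f \sim v} \frac{1}{d_f},$$
sum it over all vertices $v \in V$, and split the result into three separate sums, one for each term.

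The first sum $\sum_{v \in V} 1$ is simply $|V|$. For the second, I would invoke the handshake lemma: since every edge contributes to the degree of exactly its two endpoints, $\sum_{v \in V} d_v = 2|E|$, so the middle term contributes $-\tfrac{1}{2}(2|E|) = -|E|$. The third sum $\sum_{v \in V} \sum_{f : f \sim v} \frac{1}{d_f}$ runs over all incident vertex–face pairs, and the crux is to exchange the order of summation so that it becomes $\sum_{f \in F} \sum_{v : f \sim v} \frac{1}{d_f}$. Here I would use the fact recorded after the tessellation definition that a face $f$ has exactly $d_f$ vertices on its boundary cycle; hence the inner sum of the constant $\frac{1}{d_f}$ over those $d_f$ vertices equals $1$, and the third sum collapses to $\sum_{f \in F} 1 = |F|$.

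Combining the three pieces gives $\sum_{v \in V} \kappa(v) = |V| - |E| + |F|$, which is exactly the Euler characteristic. By Remark~\ref{tess} a finite planar tessellation is realized as a tessellation of $S^2$; in particular the unbounded face is counted among $F$, and its degree too equals the number of vertices on its (simple-cycle) boundary, so every step above remains valid for this distinguished face. Invoking Euler's formula $|V| - |E| + |F| = \chi(S^2) = 2$ then completes the argument.

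The main obstacle, and the only genuinely non-combinatorial input, is the appeal to Euler's formula itself; the interchange of summation is routine once the incidence count ``face $f$ has $d_f$ vertices'' is in hand. The one subtlety to guard against is the unbounded face: I would make explicit, through the $S^2$ realization, that it is also a polygon whose vertex count equals its degree, so that neither the handshake lemma nor the face–vertex double count silently omits or mishandles it.
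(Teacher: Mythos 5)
Your proposal is correct and follows essentially the same route as the paper's proof: sum the reformulated $\kappa(v)$ over $V$, apply the handshake lemma, interchange the double sum over incident vertex--face pairs so each face contributes $1$, and conclude with Euler's formula $|V|-|E|+|F|=2$. Your extra care about the unbounded face via the $S^2$ realization is a welcome refinement that the paper leaves implicit.
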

\begin{proof}

\begin{align*}
\sum\limits_{v\in V}\kappa(v) 
&= \sum\limits_{v\in V} \bigg(1-\frac{d_v}{2}+\sum\limits_{f:f\sim v}\frac{1}{d_f} \bigg)\\
&=|V|-\frac{1}{2}\sum\limits_{v\in V}d_v+\sum\limits_{v\in V}\sum\limits_{f:f\sim v}\frac{1}{d_f}\\
&=|V|-|E|+\sum\limits_{f\in F}\sum\limits_{v:f\sim v}\frac{1}{d_f}\\
&=|V|-|E|+\sum\limits_{f\in F}1\\
&=|V|-|E|+|F| \\&= 2
\end{align*}
We use the fact that $\sum_{v\in V}d_v=2|E|$, since each edge is counted twice in the sum. Moreover, the order of double summations is interchangeable since the sets $V$ and $F$ are finite. Lastly, $|V|-|E|+|F|=2$ is the Euler's characteristic formula applied for a finite connected planar graph.
\end{proof}

\noindent Next, we investigate graphs that have the same sign of curvatures everywhere. Let us start with non-positively curved graphs.

\begin{corollary} \label{infcor}
A tessellation that has non-positive curvature at every vertex must be infinite. 
\end{corollary}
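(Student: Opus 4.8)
The plan is to argue by contradiction, leveraging the Combinatorial Gauss-Bonnet theorem just proved. Suppose, for contradiction, that $G=(V,E,F)$ is a \emph{finite} tessellation with $\kappa(v)\le 0$ for every vertex $v\in V$. Since $G$ is a finite planar tessellation, Remark \ref{tess} lets us realize it as a finite tessellation of $S^2$, so the Combinatorial Gauss-Bonnet theorem applies and gives $\sum_{v\in V}\kappa(v)=2$.

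The contradiction is then immediate from the sign hypothesis: if each summand $\kappa(v)\le 0$, then the total $\sum_{v\in V}\kappa(v)\le 0$, which is incompatible with the value $2$ demanded by Gauss-Bonnet. Hence no finite tessellation can have non-positive curvature at every vertex, i.e.\ any such tessellation must be infinite.

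I expect there to be essentially no technical obstacle here, since the heavy lifting was already done in proving Combinatorial Gauss-Bonnet. The only point meriting a word of care is the applicability of that theorem: it was stated for \emph{finite planar tessellations}, and one should confirm that ``finite tessellation'' in the corollary is exactly the object to which Gauss-Bonnet applies. This is precisely guaranteed by Remark \ref{tess}, where a finite planar tessellation is identified with a tessellation of $S^2$ of Euler characteristic $\chi=2$. With that identification in hand, the corollary is a one-line consequence of the preceding theorem.
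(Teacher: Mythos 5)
Your argument is correct and is exactly the paper's intended proof: the paper simply says the corollary ``follows immediately from Gauss-Bonnet formula,'' and your contradiction via $\sum_{v\in V}\kappa(v)=2$ versus $\kappa(v)\le 0$ is the standard way to spell that out.
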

\begin{proof}
Follows immediately from Gauss-Bonnet formula.
\end{proof}

Next theorem is a main result from Baues and Peyerimhoff's paper \cite[Theorem 1]{BP}, which is considered as a discrete analogue of Cartan-Hadamard theorem in Riemannian geometry. We omit the proof of this theorem.

\begin{theorem}[Combinatorial Cartan-Hadamard]
Let $G=(V,E,F)$ be a tessellation. For a fixed vertex $v_0\in V$, define the \textit{cut locus} of $f_0$ to be 
$$C(v_0) := \Big\{ v'\in F: d(v_0,v') \le d(v_0,v) \textup{ for all neighbors }v \textup{ of } v' \Big\}$$

\noindent If $\kappa(v,f)\le 0$ for every corner $(v,f)$, then $G$ has no cut locus, i.e. $C(v_0)=\emptyset$ for all $v_0\in V$.
\end{theorem}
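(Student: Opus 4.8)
The plan is to argue by contradiction, exhibiting a corner of positive curvature whenever the cut locus is nonempty, via a combinatorial Gauss--Bonnet accounting on a finite disk --- exactly the kind of Euler-characteristic bookkeeping used above to prove the Combinatorial Gauss--Bonnet theorem. First I would record that $v_0$ itself trivially satisfies the defining inequality, so the real content is that no \emph{other} vertex lies in $C(v_0)$; writing $\rho:=d(v_0,\cdot)$, a vertex $v'\neq v_0$ belongs to $C(v_0)$ precisely when $\rho$ has a local maximum at $v'$, i.e. $v'$ has no neighbour $v$ with $\rho(v)=\rho(v')+1$. So I would assume such a local maximum $v'$ exists, say at level $n=\rho(v')$, and aim to force a positive corner.

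The main tool I would set up is a boundary version of combinatorial Gauss--Bonnet for a finite tessellated topological disk $\Omega$. Running the same computation as in the Combinatorial Gauss--Bonnet proof, but summing only over the faces of $\Omega$, gives
\begin{equation*}
\sum_{v\in \mathrm{int}(\Omega)} \kappa(v) \;+\; \sum_{v\in \partial\Omega} \widetilde\kappa(v) \;=\; \chi(\Omega) \;=\; 1,
\end{equation*}
where $\widetilde\kappa(v)=1-\tfrac{d_v^\Omega}{2}+\sum_{f\sim v,\, f\in\Omega}\tfrac1{d_f}$ is the curvature of a boundary vertex computed using only its edges and faces inside $\Omega$. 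Since corner curvature $\le 0$ forces $\kappa(v)=\sum_{f\sim v}\kappa(v,f)\le 0$, every interior term is nonpositive, so the boundary must carry the whole Euler characteristic: $\sum_{\partial\Omega}\widetilde\kappa(v)\ge 1$. Hence at least one boundary vertex is a ``convex corner'' ($\widetilde\kappa(v)>0$), which happens only when its $\Omega$-degree is small, i.e. where the boundary turns sharply into $\Omega$.

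The heart of the argument is to choose $\Omega$ so that a local maximum of $\rho$ is incompatible with this forced supply of convex corners. The cleanest route I see is an induction on the radius $r$, proving the combined statement that $B_r(v_0)$ is an embedded tessellated disk, that the sphere $S_r$ is a simple cycle, and that every vertex of $S_r$ has an outward neighbour in $S_{r+1}$ --- the last clause being exactly the emptiness of the cut locus at level $r$. In the step from $r$ to $r+1$ I would attach to the cycle $S_r$ the faces lying outside $B_r$, apply the boundary Gauss--Bonnet identity to the resulting disk (or to the annulus $B_{r+1}\setminus B_{r}$), and use non-positivity of the interior curvature to rule out any ``pinch'': a vertex of $S_r$ with no outward neighbour, or a failure of $S_{r+1}$ to close up into a single cycle, would produce a sub-disk whose interior curvature is $\le 0$ while its boundary lacks enough convex corners to reach $\chi=1$, forcing a positive corner and contradicting the hypothesis.

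The main obstacle I anticipate is precisely this inductive geometric step: showing that adjoining the outer faces keeps $B_{r+1}$ an embedded disk and that $S_{r+1}$ remains a single simple cycle without self-contact or splitting, while the escaping-neighbour property is preserved. This is the combinatorial incarnation of ``non-positive curvature admits no conjugate points,'' so that geodesics from $v_0$ never refocus --- the same phenomenon that drives the smooth Cartan--Hadamard theorem (Theorem \ref{CH}). Controlling it rigorously relies on the convexity forced by tessellation condition (iii) together with a careful turning/angle accounting along $S_r$, and this bookkeeping is where the real work lies; the Gauss--Bonnet identity itself is routine.
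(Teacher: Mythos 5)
The paper gives no proof of this theorem --- it is quoted from Baues--Peyerimhoff \cite{BP} and explicitly omitted --- so there is nothing internal to compare your argument against; your sketch does, however, go in the same general direction as the published proof (induction over distance spheres, combinatorial Gauss--Bonnet with boundary terms, convexity of balls). The problem is that what you have written is a plan rather than a proof, and the part you defer is the entire content of the theorem. You assume in the inductive step that $B_r(v_0)$ is an embedded disk and that $S_r$ is a single simple cycle; without that, $\chi(\Omega)=1$ is not available and the identity $\sum_{\mathrm{int}}\kappa+\sum_{\partial}\widetilde\kappa=1$ cannot be invoked. Establishing that spheres do not pinch, self-touch, or split --- the combinatorial ``no conjugate points'' statement --- is precisely the hard step, and you acknowledge it is ``where the real work lies'' without carrying it out. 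A referee would not accept this as a proof.

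There is also a more specific soft spot: the hypothesis is non-positivity of every \emph{corner} curvature $\kappa(v,f)$, but the only place your accounting uses curvature is through the weaker consequence $\kappa(v)\le 0$ at interior vertices. The corner hypothesis must enter through the boundary terms. Concretely, for a boundary vertex $v$ of a tessellated disk $\Omega$ with $d_v^\Omega$ edges of $\Omega$ at $v$ and incident $\Omega$-faces $f_1,\dots,f_{d_v^\Omega-1}$, one computes
\begin{equation*}
\widetilde\kappa(v)\;=\;\frac12+\sum_{i=1}^{d_v^\Omega-1}\kappa(v,f_i)-\frac{d_v^\Omega-1}{d_v},
\end{equation*}
so under $\kappa(v,f)\le 0$ each boundary vertex contributes at most $\tfrac12-\tfrac{1}{d_v}$, and the identity $\sum_{\partial}\widetilde\kappa\ge 1$ then forces a controlled number of sharply convex boundary corners whose locations must be reconciled with the structure of $S_r$ and with the assumed local maximum of $\rho$. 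This quantitative boundary estimate, together with the convexity supplied by tessellation condition (iii), is what makes the induction close; stating that ``a careful turning/angle accounting'' is needed does not substitute for doing it. To complete the argument you would need to either work through this bookkeeping in full or follow the treatment in \cite{BP}.
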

In words, the theorem asserts that, when using any vertex $v_0$ as a base point, there exists no vertex $x$ where the distance function $d_{v_0}(x):=d(v_0,x)$ attains the local maxima. Equivalently, it means that every geodesic (starting at any $v_0$) can be extended infinitely, as similarly stated in the theorem of Cartan-Hadamard (see Theorem \ref{CH}).

\section{Cheeger constant and isoperimetric inequality on graphs}

In Section \ref{GBCC}, we learn that a simply connected and complete surface $M$ with negative (sectional) curvatures uniformly bounded above by $-K_0<0$ (hence $M$ is non-compact by Cartan-Hadarmard theorem) satisfies the isoperimetric inequality:
$$\textup{area}(H) \le \frac{1}{\sqrt{K_0}} \cdot \textup{length}(\partial H).$$
\noindent for all compact surfaces $H\subset M$ with boundary $\partial H$ (see Theorem \ref{cheegernegthm}).

In graphs, Cheeger constant can be defined and the isoperimetric inequality can be read analogously as in the following definition and theorem.

\begin{definition} [Combinatorial Cheeger constant]
Let $G=(V,E)$ be a graph. For a finite subset $W\subset V$, let $\partial_E W$ be the set containing all edges which connect a vertex in $W$ to a vertex in $V\backslash W$, and define the volume of $W$ as $\mbox{vol}(W):=\sum\limits_{v\in W} d_v$. \\
\noindent The Cheeger constant is then defined to be
$$\alpha(G):= \inf_{W} \frac{|\partial_E W|}{\textup{vol}(W)}$$
where the infimum is taken over all finite subset $W\subset V$ such that $|W|\le \frac{1}{2}|V|$. Moreover,
if $G$ is infinite ($|V|=\infty$), then the constraint $|W|<\frac{1}{2}|V|$ may be removed.
\end{definition}

\begin{theorem} \label{comcheegernegthm}
Let $G=(V,E,F)$ be a planar tessellation. Suppose there exists a constant $K_0>0$ such that the corner curvature $\kappa(v,f)\le -K_0<0$ holds for every corner $(v,f)$. Then $\alpha(G) \ge 2K_0$.
\end{theorem}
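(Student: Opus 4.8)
The plan is to reduce the statement to the edge-isoperimetric inequality $|\partial_E W| \ge 2K_0\,\textup{vol}(W)$ for every finite $W \subset V$; taking the infimum over $W$ then yields $\alpha(G) \ge 2K_0$. I would obtain this inequality by sandwiching the curvature sum $\sum_{v\in W}\kappa(v)$ between two bounds of opposite flavour. First observe that a finite tessellation has total curvature $2$ by the combinatorial Gauss--Bonnet theorem, so the hypothesis $\kappa(v,f) \le -K_0 < 0$ forces $G$ to be infinite (Corollary \ref{infcor}); hence $W$ is always a proper finite subset and its induced subcomplex is a proper subcomplex of the plane, a fact I will need below.

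For the upper bound, since each vertex $v$ has exactly $d_v$ incident faces (corners), summing the hypothesis over the corners at $v$ gives $\kappa(v) \le -K_0 d_v$, and summing over $W$ gives $\sum_{v\in W}\kappa(v)\le -K_0\,\textup{vol}(W)$. For the lower bound I would use the expansion $\kappa(v) = 1 - \tfrac{d_v}{2} + \sum_{f\sim v}\tfrac{1}{d_f}$ together with the identity $\textup{vol}(W) = 2e_W + |\partial_E W|$, where $e_W$ is the number of edges with both endpoints in $W$. Writing $n_W(f)$ for the number of corners of a face $f$ at vertices of $W$, a direct rearrangement yields
\[
\sum_{v\in W}\kappa(v) + \tfrac12|\partial_E W| = \big(|W| - e_W + f_{\textup{int}}\big) + \sum_{f\in F_\partial}\frac{n_W(f)}{d_f},
\]
where $f_{\textup{int}}$ counts the faces all of whose vertices lie in $W$ and $F_\partial$ the faces with some but not all vertices in $W$. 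The first bracket is precisely the Euler characteristic $\chi(\Sigma)$ of the planar $2$-complex $\Sigma$ built from $W$, its interior edges and interior faces, and the remaining sum is manifestly nonnegative.

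The main obstacle is controlling $\chi(\Sigma)$, since for a planar $2$-complex $\chi(\Sigma) = c - h$ with $c\ge 1$ the number of connected components and $h$ the number of holes, so holes could a priori render the right-hand side negative. I would dispose of this by a filling argument: replace $W$ by $\hat W = W \cup \{\text{vertices enclosed in holes of }\Sigma\}$, which is finite by local finiteness and fills every hole (each face lying inside a hole then has all its vertices in $\hat W$), so $\chi(\Sigma(\hat W)) = c \ge 0$ and the lower bound $\sum_{v\in \hat W}\kappa(v) \ge -\tfrac12|\partial_E \hat W|$ holds. Since passing to $\hat W$ only enlarges the volume and shrinks the boundary ($\textup{vol}(\hat W)\ge \textup{vol}(W)$ and $\partial_E\hat W\subseteq\partial_E W$, because an enclosed vertex has no neighbour outside $\hat W$), it suffices to prove the inequality for $\hat W$. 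Combining the two bounds, $-K_0\,\textup{vol}(\hat W) \ge \sum_{v\in\hat W}\kappa(v)\ge -\tfrac12|\partial_E\hat W|$, gives $|\partial_E\hat W|\ge 2K_0\,\textup{vol}(\hat W)$, and therefore $|\partial_E W|\ge 2K_0\,\textup{vol}(W)$. The delicate points to get exactly right are the Euler-characteristic identity for subcomplexes (ruling out the degenerate case $b_2\neq 0$, which is exactly where the infiniteness of $G$ is used) and the careful verification that filling the holes behaves as claimed.
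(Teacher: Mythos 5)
Your proposal is correct and follows essentially the same route as the paper's proof: the corner-curvature hypothesis summed over the $d_v$ corners at each vertex gives $\sum_{v\in W}\kappa(v)\le -K_0\,\textup{vol}(W)$, the Gauss--Bonnet-style expansion of $\kappa(v)$ together with Euler's formula gives the lower bound $-\tfrac12|\partial_E W|$ once the holes are filled, and filling only increases the volume while shrinking the edge boundary. The only cosmetic difference is that the paper first passes to the connected component of minimal isoperimetric ratio and works with the resulting simply connected tessellation (so $\chi=1$), whereas you keep a possibly disconnected set and use $\chi=c-h\ge 0$ directly.
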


\noindent A proof of this theorem with a more precise bound on $\alpha(G)$ can be found in Keller and Peyerimhoff's paper \cite[Theorem 1]{KP}.
\begin{proof}

First of all, $G$ is infinite, by Corollary \ref{infcor}. For any finite subset $W\subseteq V$, let $G_W=(W,E_W)$ denote the finite subgraph of $G$ induced by $W$, such that $E_W\subseteq E$ is the set of all edges with both endpoints in $W$. As a subgraph of a planar graph, $G_W$ is also planar, and hence inducing the set of faces, namely $F_W$. It is not always true that $F_W\subseteq F$, in particular, if the tessellation is infinite. 

This proof involves two steps. Firstly, for given any finite $W\subseteq V$, we choose a particular $W'\subseteq V$ with $\frac{|\partial_E W'|}{\mbox{vol}(W')} \le \frac{|\partial_E W|}{\mbox{vol}(W)}$. The second step is to show that our choice of $W'$ satisfies $\frac{|\partial_E W'|}{\mbox{vol}(W')} \ge 2K_0$.

\noindent\textbf{Part 1} Suppose $G_W$ has $n$ connected components, namely $G_{W_i}=(W_i,E_{W_i})$ for $i=1,\cdots,n$. Observe that $$|\partial_E W|=\sum\limits_{i=1}^n  |\partial_E W_i| \mbox{\ \ \ \ \ \ \ and\ \ \ \ \ \ \ } \mbox{vol}(W)=\sum\limits_{i=1}^n \mbox{vol}(W_i).$$

\noindent Without loss of generality, assume that $W_1$ has the minimum isoperimetric ratio:
$$c_1:=\frac{|\partial_E W_1|}{\mbox{vol}(W_1)}=\min\limits_{1\le i\le n} \frac{|\partial_E W_i|}{\mbox{vol}(W_i)}.$$ 

\noindent It follows that $$\frac{|\partial_E W|}{\mbox{vol}(W)}=\dfrac{\sum\limits_{i=1}^n  |\partial_E W_i|}{\sum\limits_{i=1}^n \mbox{vol}(W_i)} \ge c_1=\frac{|\partial_E W_1|}{\mbox{vol}(W_1)}.$$

Next, construct a set $W'\subseteq V$ by adding into the set $W_1$ all vertices $v\in V$ (if they exist) such that $v$ lies in $U$, the union of all bounded faces of $G_{W_1}$. Now consider the induced subgraph $G_{W'}=(W', E_{W'})$ with the set of faces $F_{W'}$. Geometrically, the difference between the graph $G_{W_1}$ and $G_{W'}$ is that $G_{W_1}$ was connected but may not have been a tessellation, whereas $G_{W'}$ is "simply connected" and it is a tessellation. Figure \ref{fig:comparetess} shows an example when $G_{W_1}$ has a non-polygonal face, but $G_{W'}$ has nicely tessellating faces. Note that $\partial_E {W'} \subseteq\partial_E W_1$. Hence, $$\frac{|\partial_E W_1|}{\mbox{vol}(W_1)} \ge \frac{|\partial_E W'|}{\mbox{vol}(W')}.$$ 

\begin{figure}[h] 
\centering
\includegraphics[width= 10cm]{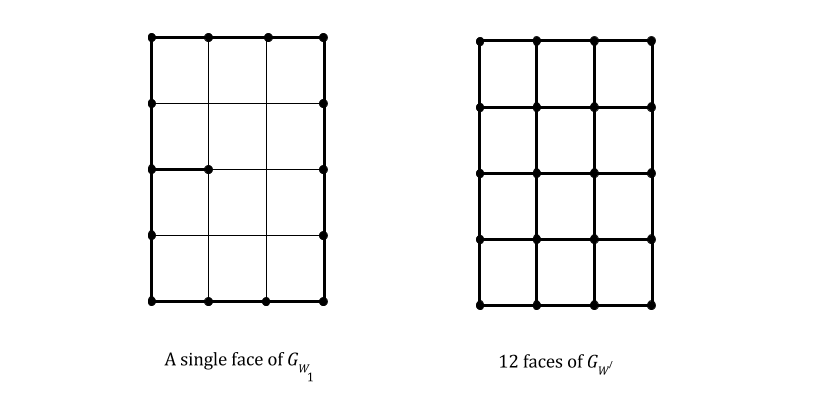}
\caption{A non-polygonal face in $G_{W_1}$ and 12 faces in $G_{W'}$}
\label{fig:comparetess}
\end{figure}

Each bounded face of $G_{W'}$ has no vertex $v\in V$ in its interior, because otherwise $v$ would be included in $W'$ in the construction step. In other words, each bounded face of $G_{W'}$ also belongs to $G$. Moreover, $G_{W'}$ has only one unbounded face. This unbounded face must not be a face of $G$; otherwise, all faces of $G_{W'}$ and all faces of $G$ coincide, which then implies that $G_{W'}=G$. This is impossible, since $G_{W'}$ is finite but $G$ is infinite. Therefore, 
$$|F_{W'}|-|F_{W'}\cap F|=1.$$

\noindent\textbf{Part 2} The assumption on $\kappa(v,f)$ implies that for any finite $W\subseteq V$,
\begin{align} \label{cc0}
\sum\limits_{v\in W} \kappa(v) \le -K_0 \mbox{vol}(W)
\end{align}
By definition of curvature,
\begin{align} \label{cc1}
\sum\limits_{v\in W} \kappa(v)&= \sum\limits_{v\in W}\bigg(1-\frac{d_v}{2}+\sum\limits_{\substack{f\in F\\f\sim v}}\frac{1}{d_f} \bigg) \nonumber\\
&=|W|-\frac{1}{2}\sum\limits_{v\in W}d_v+\sum\limits_{v\in W}\sum\limits_{\substack{f\in F\\f\sim v}}\frac{1}{d_f}
\end{align}
whereas 
\begin{equation}\label{cc2}
\sum\limits_{v\in W}d_v=2|E_W|+|\partial_E W|
\end{equation}

\noindent because each edge in $E_W$ has both endpoints in $W$ and each edge in $\partial_E W$ has exactly one endpoint in $W$.
Moreover,
\begin{equation}\label{cc3}
\sum\limits_{v\in W}\sum\limits_{\substack{f\in F\\f\sim v}}\frac{1}{d_f} \ge \sum\limits_{\substack{f\in F_W\cap F}}\sum\limits_{\substack{v\in W\\f\sim v}}\frac{1}{d_f}=\sum\limits_{\substack{f\in F_W\cap F}} 1= |F_W\cap F|
\end{equation}
\noindent since we restrict the sum to be summed only over the faces $f\in F_W\cap F$, each of which is a polygon whose vertices are in $W$.
\\

\noindent Combining \eqref{cc1}, \eqref{cc2}, and \eqref{cc3}, we obtain
\begin{align*}
\sum\limits_{v\in W} \kappa(v)\ge |W|-|E_W|-\frac{|\partial_E W|}{2}+|F_W\cup F|.
\end{align*}

\noindent In particular, for our choice of $W'\subset V$ from \textbf{Part 1} we have 
\begin{align*}
-K_0 \mbox{vol}(W') 
&\ge |W'|-|E_{W'}|-\frac{|\partial_E W'|}{2}+|F_{W'}|-1\\
&=1-\frac{|\partial_E W'|}{2}
\end{align*}
where the last equality applies Euler's formula $|W'|-|E_{W'}|+|F_{W'}|=2$ for a finite connected planar graph $W'$.
\\

\noindent We can now conclude $\frac{|\partial_E W'|}{\mbox{vol}(W')} \ge 2K_0$ as desired.
\end{proof}

\section{Combinatorial Bonnet-Myers}
At the end of \cite{Higuchi}, Higuchi conjectures that everywhere positive combinatorial curvature implies the finiteness of graphs. This conjecture can be regarded as a discrete analogue to a weak version of the Bonnet-Myers theorem. 

\begin{conjecture}[Higuchi]
A tessellation that has positive curvature at every vertex must be a finite graph. 
\end{conjecture}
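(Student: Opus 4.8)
The plan is to argue by contradiction, combining the combinatorial Gauss--Bonnet theorem with a discreteness phenomenon: positive curvature cannot be pushed arbitrarily close to zero, so infinitely many vertices would force an impossibly large total curvature. I would carry this out in three stages --- a local bound on vertex degrees, a uniform positive lower bound on the curvature, and a global counting step.

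First I would bound the vertex degrees. Since a vertex $v$ is incident to exactly $d_v$ faces and each face satisfies $d_f\ge 3$, we have $\sum_{f\sim v}\frac{1}{d_f}\le \frac{d_v}{3}$. Substituting into $\kappa(v)=1-\frac{d_v}{2}+\sum_{f\sim v}\frac{1}{d_f}>0$ yields $1-\frac{d_v}{6}>0$, hence $d_v\le 5$. Thus every vertex of a positively curved tessellation has degree $3$, $4$, or $5$, which already restricts the local picture to finitely many degree ``types''.

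The main obstacle is the second stage: proving there is a universal constant $c_0>0$ with $\kappa(v)\ge c_0$ at every vertex of every positively curved tessellation. This is delicate because the incident face degrees $d_f$ are \emph{not} bounded, so a priori $\kappa(v)$ could approach $0$ from above. The key is the tension between positivity and discreteness: for $\kappa(v)>0$ the incident faces must have small degree, yet $\kappa(v)$ is a sum of unit fractions minus $\frac{1}{2}$, and such sums can approach $\frac{1}{2}$ only along Sylvester-type sequences of denominators (for instance $\frac{1}{3}+\frac{1}{7}+\frac{1}{41}>\frac{1}{2}$ by a margin of $\frac{1}{1722}$). Running the case analysis over $d_v\in\{3,4,5\}$ and the finitely many face-degree patterns that keep $\kappa(v)$ positive, one extracts an explicit minimum positive value. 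I expect this combinatorial optimisation to be the genuinely hard and lengthy part of the proof.

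With the uniform bound $\kappa(v)\ge c_0$ in hand, finiteness follows from Gauss--Bonnet. If $G$ were finite, the combinatorial Gauss--Bonnet theorem gives $\sum_{v\in V}\kappa(v)=2$, so $|V|\,c_0\le 2$ and $|V|\le 2/c_0$; thus positively curved \emph{finite} tessellations have a priori bounded size. To rule out an infinite $G$, I would apply the Euler-characteristic identity to an exhaustion of $G$ by finite sub-tessellations $H_n$ that are topological disks, each with $\chi(H_n)=1$, using a boundary version of Gauss--Bonnet to bound the total interior curvature of $H_n$ independently of $n$. Since that interior curvature is at least $c_0$ times the number of interior vertices of $H_n$, this count stays bounded while the $H_n$ exhaust $G$, contradicting infiniteness. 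The one delicate point here is controlling the sign and size of the boundary contribution in the disk version of Gauss--Bonnet.
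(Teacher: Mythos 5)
There is a genuine gap, and it sits exactly where you predicted the difficulty would be: the universal lower bound $c_0>0$ in your second stage does not exist. The paper's own examples of prisms and antiprisms are counterexamples to that claim. A prism vertex has degree $3$ with incident face degrees $(4,4,n)$, giving $\kappa(v)=\frac{1}{4}+\frac{1}{4}+\frac{1}{n}-\frac{1}{2}=\frac{1}{n}$, and an antiprism vertex has degree $4$ with face degrees $(3,3,3,n)$, giving $\kappa(v)=\frac{1}{3}+\frac{1}{3}+\frac{1}{3}+\frac{1}{n}-1=\frac{1}{n}$; both are positive for every $n$ and tend to $0$ as $n\to\infty$. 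So there are \emph{infinitely} many face-degree patterns compatible with positive curvature, not finitely many, and the infimum of $\kappa(v)$ over all vertices of all positively curved tessellations is $0$. (Your Sylvester-type analysis does work for some degree patterns, e.g.\ $(3,7,41)$, but it cannot close off the one-parameter families above.) This is precisely why the DeVos--Mohar theorem, cited in the paper as the resolution of Higuchi's conjecture, must except prisms and antiprisms from its uniform bound on $|V|$, and why their proof is substantially harder than a curvature-summation argument: they must handle large faces by a separate structural analysis rather than by a pointwise curvature bound.

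Your third stage has a secondary problem as well. In the disk version of Gauss--Bonnet for a finite sub-tessellation $H_n$ (compare the computation in the proof of Theorem \ref{comcheegernegthm}, which yields $\sum_{v\in W'}\kappa(v)\ge 1-\frac{|\partial_E W'|}{2}$), the boundary contribution scales with the number of boundary edges of $H_n$, which in general grows without bound along an exhaustion. So the total interior curvature of $H_n$ is \emph{not} bounded independently of $n$, and even granting a uniform $c_0$ you would not get a contradiction without controlling the boundary growth. Note also that the paper itself offers no proof of this statement: it records it as Higuchi's conjecture and defers entirely to DeVos and Mohar \cite{Mohar}, so the expectation that a short self-contained argument exists should itself be treated with suspicion.
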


Let us investigate two examples of tessellations, namely $\it prism$ and $\it antiprism$.\\
A prism is a graph with $2n$ vertices $u_1,v_1,u_2,v_2,...,u_n,v_n$ with edges joining $$u_1\sim u_2\sim ...\sim u_n \sim u_1 \mbox{ and } v_1\sim v_2\sim ...\sim v_n \sim v_1 \mbox{ and } u_i\sim v_i$$ for all $1\le i\le n$. Its faces consist of two $n$-gons, and $n$ quadrilaterals. See Figure \ref{fig:prism}.

\begin{figure}[h] 
\centering
\includegraphics[width= 10cm]{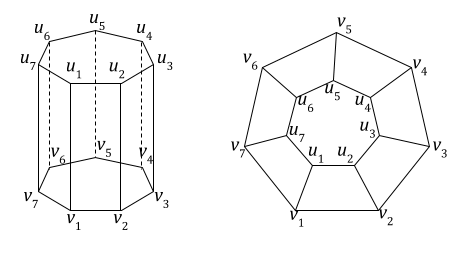}
\caption{Prism and its embedding in $\mathbb{R}^2$}
\label{fig:prism}
\end{figure}

In its embedding in $\mathbb{R}^2$, the unbounded component represents one of the two $n$-gonal faces. For every vertex $v$ of the prism, there are one $n$-gon and two quadrilaterals incident to it. Hence the combinatorial curvature can be calculated by $$\kappa(v)=1-\frac{3}{2}+\bigg(\frac{1}{4}+\frac{1}{4}+\frac{1}{n}\bigg)= \frac{1}{n} > 0.$$

An antiprism can be constructed from a prism with additional edges $u_i\sim v_{i+1}$ for $1\le i \le n-1$ and $u_n\sim v_1$. It has two $n$-gonal faces, and  $2n$ triangular faces. See Figure \ref{fig:antiprism}.

\begin{figure}[h] 
\centering
\includegraphics[width= 10cm]{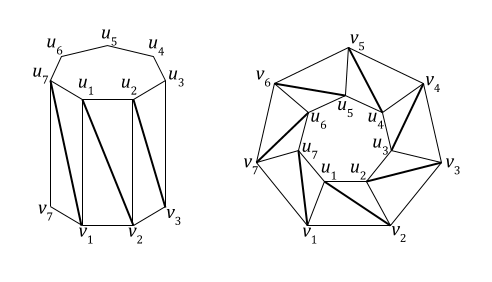}
\caption{Antiprism and its embedding in $\mathbb{R}^2$}
\label{fig:antiprism}
\end{figure}

Each vertex $v$ of an antiprism has one $n$-gon and three quadrilaterals incident to it, so the curvature is $$\kappa(v)=1-\frac{4}{2}+\bigg(\frac{1}{3}+\frac{1}{3}+\frac{1}{3}+\frac{1}{n}\bigg)= \frac{1}{n} > 0.$$

As shown above, prism and antiprism demonstrate two classes of tessellations that have positive curvature everywhere. Although both prism and antiprism are finite graphs, their numbers of vertices can be arbitrary large.
In \cite{Mohar}, DeVos and Mohar proved Higuchi's conjecture and provided a further insight about the finiteness that: all everywhere-positively-curved tessellation (except prisms and antiprisms) have a uniform upper bound on the number of their vertices, and they asked for a sharp bound. In \cite{208}, the authors gives an example of one such graph with $208$ vertices. On the other hand, it was recently proved in \cite{Ghidelli} that all such graphs have at most 208 vertices, hence 208 is the optimal number.

\begin{theorem}[Combinatorial Bonnet-Myers]
If $\kappa(v)>0$ for every vertex $v$ in a planar graph $G=(V,E)$, then $G$ is finite. Moreover, $G$ is either a prism, antiprism, or $|V|\le 208$. 
\end{theorem}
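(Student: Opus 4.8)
The plan is to separate the two assertions — finiteness, and the prism/antiprism/$208$ trichotomy — and to route everything through the local invariant $\kappa(v)=1-\frac{d_v}{2}+\sum_{f\sim v}\frac{1}{d_f}$ together with the Combinatorial Gauss-Bonnet identity $\sum_{v}\kappa(v)=2$ (interpreting $G$, as throughout the chapter, as a planar tessellation so that $\kappa$ is defined). First I would record the elementary local constraints forced by $\kappa(v)>0$: since every vertex is incident to exactly $d_v$ faces, each of degree at least $3$, we have $\sum_{f\sim v}\frac{1}{d_f}\le\frac{d_v}{3}$, hence $\kappa(v)\le 1-\frac{d_v}{6}$ and therefore $d_v\le 5$. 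Thus in a positively curved tessellation every vertex has degree $3$, $4$, or $5$, and I would next enumerate, for each such degree, the finitely many ``shapes'' (multisets of incident face-degrees) compatible with $\kappa(v)>0$, since $\kappa(v)>0$ is equivalent to $\sum_{f\sim v}\frac{1}{d_f}>\frac{d_v}{2}-1$.

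The decisive observation is that $\kappa(v)$ can be positive yet arbitrarily small only for two families of local configurations: the degree-$3$ vertex incident to two quadrilaterals and one $n$-gon, which gives $\kappa(v)=\frac{1}{n}$ (the prism pattern), and the degree-$4$ vertex incident to three triangles and one $n$-gon, which also gives $\kappa(v)=\frac{1}{n}$ (the antiprism pattern). For every other admissible shape the incident face-degrees are bounded, so there is a universal constant $\epsilon_0>0$ with $\kappa(v)\ge\epsilon_0$ whenever $v$ is not of prism or antiprism type. This dichotomy — a uniform positive lower bound away from two degenerate one-parameter families — is the combinatorial heart of the matter and the step I expect to be most delicate, since it amounts to an Egyptian-fraction analysis pinning down exactly which reciprocal sums can approach the threshold $\frac{d_v}{2}-1$ from above.

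For finiteness I would argue that an infinite positively curved tessellation cannot exist. Corollary \ref{infcor} already handles the non-positively curved infinite case via Gauss-Bonnet; here one needs the analogue for infinite planar graphs due to DeVos and Mohar \cite{Mohar}, whose combinatorial Gauss-Bonnet for infinite tessellations shows that everywhere-positive curvature cannot be sustained on an infinite graph. Intuitively, a disk exhaustion $D_n$, regarded as a finite tessellation of $S^2$ with one large outer face (cf.\ Remark \ref{tess}), has total interior curvature equal to $2$ minus controlled boundary contributions, which cannot absorb the positive curvature accumulating over infinitely many interior vertices. Granting finiteness, the Combinatorial Gauss-Bonnet identity gives $\sum_v\kappa(v)=2$, and for a tessellation containing no prism- or antiprism-type vertex the bound $\kappa(v)\ge\epsilon_0$ immediately yields the crude estimate $|V|\le 2/\epsilon_0$.

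It remains to upgrade this crude bound to the sharp trichotomy. Following \cite{Mohar}, one shows that a positively curved tessellation which is neither a prism nor an antiprism can contain only boundedly many prism/antiprism-type vertices, arranged in limited ways, so that the uniform lower bound effectively governs $\sum_v\kappa(v)=2$ and produces an absolute bound on $|V|$; the optimal constant then comes from the $208$-vertex example of \cite{208} together with Ghidelli's proof \cite{Ghidelli} that no positively curved tessellation other than prisms and antiprisms exceeds $208$ vertices. The genuinely hard inputs — the infinite Gauss-Bonnet forcing finiteness and the exact value $208$ — are precisely the results I would cite rather than reprove, and the principal obstacle in a self-contained treatment would be the bookkeeping that confines the prism/antiprism patterns and converts the local curvature lower bound into a global vertex bound.
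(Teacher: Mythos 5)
The paper does not actually prove this theorem: it states it after the prism and antiprism computations and attributes the content entirely to DeVos--Mohar \cite{Mohar} (finiteness and the uniform bound outside the two infinite families), Sneddon--Nicholson \cite{208} (the $208$-vertex example) and Ghidelli \cite{Ghidelli} (the matching upper bound). Your proposal ultimately takes the same route --- you defer exactly those three hard inputs to the same references --- so at the level of what is genuinely proved versus cited, you and the paper agree. The extra local analysis you supply on top is sound where it is checkable: $\sum_{f\sim v}\frac{1}{d_f}\le\frac{d_v}{3}$ does give $\kappa(v)\le 1-\frac{d_v}{6}$ and hence $d_v\le 5$, and the prism pattern $(4,4,n)$ at a degree-$3$ vertex and the antiprism pattern $(3,3,3,n)$ at a degree-$4$ vertex each give $\kappa(v)=\frac{1}{n}$.

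One concrete inaccuracy: your ``decisive observation'' that only \emph{two} one-parameter families of local configurations have $\kappa(v)>0$ arbitrarily small is false as stated. At a degree-$3$ vertex the face pattern $(3,6,n)$ also gives $\kappa(v)=\frac{1}{3}+\frac{1}{6}+\frac{1}{n}-\frac{1}{2}=\frac{1}{n}$, so there are at least three such families, and the Egyptian-fraction analysis must additionally show that the $(3,6,n)$ pattern (unlike the prism and antiprism patterns) cannot be assembled into an infinite family of everywhere-positively-curved tessellations. That exclusion is part of what DeVos and Mohar actually prove, so your appeal to \cite{Mohar} still covers it, but the dichotomy as you phrase it --- uniform lower bound $\epsilon_0$ away from exactly the prism and antiprism corners --- would need to be repaired before it could serve as the ``combinatorial heart'' of a self-contained argument. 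Since you flag this as the delicate step and lean on the literature for it, I would call this a gap in the sketch rather than in the proof strategy.
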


\chapter{Bakry-\'{E}mery Curvature} \label{BECchap}

While the previous curvature notion was based on a discrete version of the Gauss-Bonnet Theorem in two-dimension, the curvature notion in this chapter, introduced by D. Bakry and M. \'{E}mery \cite{BE}, is based on Bochner's formula from Riemannian geometry. Graphs are no longer assumed to be planar, and their dimensions are not restricted to two. Instead, the dimension can be chosen to be an arbitrary positive real number, including $\infty$.

\section{CD inequality and $\Gamma$-calculus}
Bochner's formula states that for every smooth real function $f\in C^\infty(M)$ and at every point $x\in M$,
\begin{equation} \label{bochner}
\frac{1}{2}\Delta |\grad f|^2 (x)= \|\textup{Hess }f\|^2(x) + \langle \grad\Delta f(x), \grad f(x)\rangle +\mbox{Ric}(\grad f(x)),
\end{equation}

\noindent Further, defined at each point $x$ the curvature term $K_x:= \inf\limits_{v\in T_xM} \frac{\textup{Ric}(v)}{|v|^2}$ which gives a lower bound for Ricci curvature term: $$\textup{Ric}(\grad f(x))\ge K_x|\grad f(x)|^2.$$ Recall also Proposition \ref{HL}: $\displaystyle \|\textup{Hess }f\|^2(x)\ge \frac{1}{n}(\Delta f(x))^2$. Combining these two inequalities into the equation \eqref{bochner}, we have the so called "curvature-dimension" inequality,

\begin{equation} \label{CD}
\frac{1}{2}\Delta |\grad f|^2 (x) -  \langle \grad\Delta f(x), \grad f(x)\rangle \ge \frac{1}{n}(\Delta f(x))^2 +K_x|\grad f(x)|^2
\end{equation}

According to \cite{BE}, define bilinear operators $\Gamma$ and $\Gamma_2$ as follows.
\begin{definition}[$\Gamma$ and $\Gamma_2$ notions]
For $f,g\in C^\infty(M)$, define
\begin{equation} \label{Gamma}
2\Gamma(f,g)(x) := \Delta(fg)(x) - f(x)\cdot\Delta g(x)-\Delta f(x) \cdot g(x)
\end{equation}

\begin{equation} \label{Gamma2}
2\Gamma_2(f,g)(x) := \Delta\Gamma(f,g)(x) - \Gamma(f,\Delta g)(x)-\Gamma(\Delta f, g)(x)
\end{equation}
\end{definition}
%\begin{proposition}
%In fact, $\Gamma(f,g)(x)=2\langle \grad f(x), \grad g(x) \rangle$
%\end{proposition}
%\begin{proof}
%Apply the product rules for operators $\dive$and $\grad$.
%\begin{align*}
%\Delta(fg)&=\dive(\grad(fg))\\
%&=\dive(f\cdot\grad g)+\dive(g\cdot\grad f)\\
%&=\langle \grad f, \grad g\rangle +f\cdot\dive(\grad g) + \langle \grad g, \grad f\rangle +g\cdot\dive(\grad f)\\
%&=2\langle \grad f, \grad g\rangle+f\cdot\Delta g+g\cdot\Delta f
%\end{align*}
%\noindent yielding the desired result.
%
%\end{proof}
\noindent In fact, $\Gamma(f,g)(x)=2\langle \grad f(x), \grad g(x) \rangle$ by the product rule (see Proposition \ref{PR}).
\noindent We further denote $\Gamma(f) := \Gamma(f,f)$, and $\Gamma_2(f) := \Gamma_2(f,f)$. The curvature-dimension inequality \eqref{CD} can then be rewritten as 
\begin{equation}
\Gamma_2(f)(x) \ge \frac{1}{n}(\Delta f(x))^2 +K_x\Gamma(f)(x)
\end{equation}
which holds for all $f\in C^\infty(M)$ and $x\in M$.\\

Observe that the above curvature-dimension inequality involves the $\Gamma$ and $\Gamma_2$ terms, which were defined merely via the Laplacian. This allows us to consider curvature-dimension property on any space, once the Laplacian is specified on such space.
\begin{definition}
Let $X$ be a space and $C(X)$ be the function space of $X$, that is the set of all functions $f:X\rightarrow \mathbb{R}$, equipped with the addition and scalar multiplication rules: $(f+f')(x)=f(x)+f'(x)$ and $(cf)(x)=c\cdot f(x)$. Assume that the $X$ has Laplacian operator $\Delta$ defined on it. Fix a number $n\in \mathbb{R}^+\cup \{\infty\}$ to be the \textit{dimension} of $X$. The curvature at each point $x\in X$ is defined to be the maximal number $K_x$ such that the inequality

\begin{equation}
\Gamma_2(f)(x) \ge \frac{1}{n}(\Delta f(x))^2 +K_x\Gamma(f)(x)
\end{equation}
\noindent holds true for all functions $f\in C(X)$\\\\
Moreover, for a fixed real number $K$, we say that $X$ satisfies $CD(K,n)$ if $K_x\ge K$ for all $x\in X$; in other words,
\begin{equation}
\Gamma_2(f)(x) \ge \frac{1}{n}(\Delta f(x))^2 +K\Gamma(f)(x)
\end{equation}
holds for all $x\in X$ and for all $f\in C(X)$.\\
Here the operators $\Gamma$ and $\Gamma_2$ on $X$ are also defined as in the equation \eqref{Gamma} and \eqref{Gamma2}.
\end{definition}

\noindent In particular, Laplacian on graphs can be specified as follows.
\begin{definition}[Discrete Laplacian]
Let $G=(V,E)$ be a graph with finite degree on each vertex. Laplacian $\Delta:C(V)\rightarrow C(V)$ is a linear operator, defined on any function $f: V\rightarrow \mathbb{R}$ as
$$\Delta f(x) := \frac{1}{d_x}\sum\limits_{z\sim x} \bigg(f(z)-f(x)\bigg)$$
for all vertices $x\in V$.
\noindent In terms of matrix representation, we can write Laplacian as $$\Delta=D^{-1}(A-\textup{Id})$$
\noindent where $D$ is the diagonal matrix whose entries are the vertex degrees: $D_{xx}=d_x$, and $A$ is the agjacency matrix: $A_{xy}=1$ if $x\sim y$ and 0 otherwise.
\end{definition}

\noindent This notion is sometimes called the normalized Laplacian (in contrast to the non-normalized one, where the factor $\frac{1}{d_x}$ is dropped). Here are some properties of operators $\Delta$ and $\Gamma$ defined on graphs.
\begin{proposition} \label{proplap}
Let $G=(V,E)$ be a graph. For $f,g\in C(V)$, we have
\begin{itemize}
\item[(a)]
$$2\Gamma(f,g)(x)=\frac{1}{d_x}\sum\limits_{z\sim x} \bigg(f(z)-f(x)\bigg)\bigg(g(z)-g(x)\bigg).$$
In particular,
$$2\Gamma(f)(x)=\frac{1}{d_x}\sum\limits_{z\sim x} \bigg(f(z)-f(x)\bigg)^2.$$
\item[(b)]
$(\Delta f(x))^2 \le 2\Gamma f(x)$ for all $f\in C(V)$ and $x\in V$.
\item[(c)] If $G$ is finite, then $$\sum\limits_{x\in V} d_x \Delta f(x)=0$$ \noindent for all $f\in C(V)$.
\end{itemize}
\end{proposition}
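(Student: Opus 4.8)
The plan is to prove all three parts by unwinding the definition of the discrete Laplacian, since each assertion reduces to an elementary manipulation once the Laplacian is written out explicitly. The three ingredients are an algebraic factorisation for (a), the Cauchy--Schwarz inequality for (b), and a cancellation over edges for (c).

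For part (a), I would start from the defining identity \eqref{Gamma}, namely $2\Gamma(f,g)(x) = \Delta(fg)(x) - f(x)\Delta g(x) - g(x)\Delta f(x)$, and substitute the discrete Laplacian into each of the three terms. All three terms carry the common prefactor $\frac{1}{d_x}$ and are summed over the neighbours $z\sim x$, so I can combine them into a single sum whose summand is $f(z)g(z) - f(x)g(z) - g(x)f(z) + f(x)g(x)$. The point is then to recognise that this summand factors as $(f(z)-f(x))(g(z)-g(x))$, which yields the claimed formula; specialising to $g=f$ gives the expression for $2\Gamma(f)(x)$. The only real care needed here is bookkeeping: making sure that the two $+f(x)g(x)$ terms produced by the single-Laplacian pieces combine correctly with the single $-f(x)g(x)$ coming from $\Delta(fg)$.

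For part (b), I would use the formula just established. Writing $a_z := f(z)-f(x)$, the Laplacian $\Delta f(x) = \frac{1}{d_x}\sum_{z\sim x} a_z$ is the average of the $d_x$ quantities $a_z$, while $2\Gamma f(x) = \frac{1}{d_x}\sum_{z\sim x} a_z^2$ is the average of their squares. The Cauchy--Schwarz inequality $\bigl(\sum_{z\sim x} a_z\bigr)^2 \le d_x \sum_{z\sim x} a_z^2$ then gives $(\Delta f(x))^2 \le 2\Gamma f(x)$ after dividing through by $d_x^2$. This is exactly the discrete counterpart of the inequality $\|\textup{Hess }f\|^2 \ge \frac{1}{n}(\Delta f)^2$ from Proposition \ref{HL}, with the dimension $n$ replaced by the degree $d_x$.

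For part (c), I would write $\sum_{x\in V} d_x\Delta f(x) = \sum_{x\in V}\sum_{z\sim x}(f(z)-f(x))$, where the factor $d_x$ has cancelled the $\frac{1}{d_x}$ inside the Laplacian. This double sum ranges over all ordered pairs of adjacent vertices, and each unordered edge $\{x,z\}$ contributes $f(z)-f(x)$ once and $f(x)-f(z)$ once, so the contributions of every edge cancel and the total is zero; finiteness of $G$ is precisely what makes this rearrangement legitimate, and the argument is the discrete analogue of the Divergence Theorem used earlier. None of the three steps is a genuine obstacle, but the factorisation in (a) is the place where a sign or index error is most likely to creep in, so I would carry it out most carefully.
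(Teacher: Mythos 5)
Your proposal is correct and follows essentially the same route as the paper: expand the defining identity for $\Gamma$ and factor the summand in (a), apply Cauchy--Schwarz to the neighbour averages in (b) (the paper phrases this as the AM--QM inequality, which is the same estimate), and cancel contributions over edges in (c) (the paper swaps the order of summation, which is the same cancellation). No gaps.
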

\begin{proof} \mbox{  }
\begin{itemize}
\item[(a)] Straightforward calculation from the definition gives
\begin{align*}
2\Gamma(f,g)(x)&= \Delta(fg)(x) - f(x)\cdot\Delta g(x)-\Delta f(x) \cdot g(x)\\
&=\frac{1}{d_x}\sum\limits_{z\sim x} \bigg[\bigg(f(z)g(z)-f(x)g(x)\bigg) \\
&\ \ \ \ \ \ \ \ \ \ \ \ \ \ \ - f(x)\bigg(g(z)-g(x)\bigg) -g(x)\bigg(f(z)-f(x)\bigg)\bigg]\\
&=\frac{1}{d_x}\sum\limits_{z\sim x} \bigg(f(z)-f(x)\bigg)\bigg(g(z)-g(x)\bigg).
\end{align*}
\noindent The second identity in part (a) follows immediately.\\
\item[(b)]
From Arithmetic-Quadratic mean (AM-QM) inequality,
$$|\Delta f(x)| \le \frac{1}{d_x}\sum\limits_{z\sim x} \bigg|f(z)-f(x)\bigg| \le \sqrt {\frac{1}{d_x}\sum\limits_{z\sim x} \bigg(f(z)-f(x)\bigg)^2} \stackrel{(a)}{=} \sqrt{2\Gamma f(x)}.$$

\item[(c)]
Suppose $G$ is finite.
\begin{align*}
\sum\limits_{x\in V} d_x \Delta f(x) &= \sum\limits_{x\in V} \sum\limits_{z: z\sim x}\bigg(f(z)-f(x)\bigg) \\ 
&=\sum\limits_{x\in V} \sum\limits_{z: z\sim x} f(z) - \sum\limits_{x\in V} d_xf(x)\\
&=\sum\limits_{z\in V} \sum\limits_{x: z\sim x} f(z) - \sum\limits_{x\in V} d_xf(x)\\
&=\sum\limits_{z\in V} d_z f(z) - \sum\limits_{x\in V} d_xf(x) = 0
\end{align*}
\end{itemize}
\end{proof}

In Section \ref{eigenlap}, we obtain Lichnerowicz's bound on first nonzero eigenvalue by taking integral on the Bochner's formula and applying the Divergence theorem. Here, we imitate a similar result in a discrete analogue. 

\begin{theorem}[B-E Lichnerowicz]
Let $G=(V,E)$ be a finite connected graph satisfying $CD(K,n)$ condition for some $K>0$. Then the first nonzero eigenvalue with respect to the Laplacian operator $\Delta$ satisfies $\lambda_1 \ge \frac{n}{n-1}K$.
\end{theorem}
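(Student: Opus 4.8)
The plan is to transcribe the continuous Lichnerowicz proof into the discrete setting, with the identity $\sum_{x\in V} d_x \Delta g(x)=0$ from Proposition \ref{proplap}(c) taking over the role played there by the Divergence Theorem. To begin, fix an eigenfunction $f$ realizing $\lambda_1$, so that $\Delta f = -\lambda_1 f$ with $\lambda_1>0$. Being a nontrivial eigenfunction, $f\not\equiv 0$, and hence $S:=\sum_{x\in V} d_x f(x)^2>0$; this positivity is exactly what will permit dividing at the end.

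The technical core consists of two discrete ``summation-by-parts'' identities, each obtained by isolating a single total-Laplacian term. Reading off the definitions \eqref{Gamma} and \eqref{Gamma2} with $g=f$ gives, pointwise,
$$2\Gamma(f)(x)=\Delta(f^2)(x)-2f(x)\Delta f(x),\qquad 2\Gamma_2(f)(x)=\Delta\Gamma(f)(x)-2\Gamma(f,\Delta f)(x).$$
Multiplying each by $d_x$ and summing over $V$, the leading terms $\sum_x d_x\Delta(f^2)$ and $\sum_x d_x\Delta\Gamma(f)$ vanish by Proposition \ref{proplap}(c). Substituting $\Delta f=-\lambda_1 f$ and using bilinearity of $\Gamma$ (so that $\Gamma(f,\Delta f)=-\lambda_1\Gamma(f)$), I obtain
$$\sum_{x\in V} d_x\,\Gamma(f)(x)=\lambda_1 S, \qquad \sum_{x\in V} d_x\,\Gamma_2(f)(x)=\lambda_1\sum_{x\in V} d_x\,\Gamma(f)(x)=\lambda_1^2 S.$$

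With these in hand I would sum the $CD(K,n)$ inequality $\Gamma_2(f)(x)\ge \frac1n(\Delta f(x))^2+K\Gamma(f)(x)$ against the weights $d_x$. Since $(\Delta f(x))^2=\lambda_1^2 f(x)^2$, the right-hand side evaluates to $\frac1n\lambda_1^2 S+K\lambda_1 S$, whereas the left-hand side is exactly $\lambda_1^2 S$ by the identities above. Cancelling the common positive factor $\lambda_1 S$ leaves $\lambda_1\ge \frac1n\lambda_1+K$, which rearranges into $\lambda_1\ge \frac{n}{n-1}K$, as claimed.

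I expect the only genuine delicacy to lie in the two summation-by-parts identities: one must cast $\Gamma$ and $\Gamma_2$ in precisely the form in which a single $\Delta(\,\cdot\,)$ is exposed, so that Proposition \ref{proplap}(c) can annihilate it, and then insert the eigenvalue relation at the correct moment. Note that, in contrast to the continuous argument, no separate appeal to a Hessian estimate is required: the role of $\|\textup{Hess }f\|^2\ge\frac1n(\Delta f)^2$ from Proposition \ref{HL} is entirely absorbed into the definition of the $CD(K,n)$ condition, so once the two identities are secured the remainder is elementary algebra.
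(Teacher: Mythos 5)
Your proposal is correct and follows essentially the same route as the paper's proof: the same two weighted summation-by-parts identities $\sum_x d_x\Gamma(f)(x)=\lambda_1 S$ and $\sum_x d_x\Gamma_2(f)(x)=\lambda_1^2 S$ obtained from Proposition \ref{proplap}(c), followed by summing the $CD(K,n)$ inequality against the weights $d_x$. The only cosmetic difference is that you carry the factor $S=\sum_x d_x f(x)^2$ and divide at the end, whereas the paper normalizes $S=1$ at the outset.
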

In fact, the condition that $G$ is finite can be removed, since the condition $CD(K,n)$ when $K>0$ already implies the finiteness of $G$ by Bonnet-Myer's theorem, which we will discuss later on in this chapter.
\begin{proof}
Suppose $f$ is an eigenfunction satisfying $\Delta f+\lambda f=0$. Due to a scalar multiplication to $f$, we may assume $\sum\limits_{x\in V} d_x  f^2(x)=1$.
Now we aim to compute the total sum of all terms in the $CD(K,n)$ condition: $$\Gamma_2(f)(x) \ge \frac{1}{n}(\Delta f(x))^2 +K\Gamma(f)(x).$$ 

\noindent From the definition of $\Gamma(f)$, we have
\begin{align*}
2\cdot\sum\limits_{x\in V} d_x \Gamma(f)(x)
&=\sum\limits_{x\in V} d_x \Delta (f^2)(x)-2\sum\limits_{x\in V} d_x f(x)\Delta f(x)\\
&=0+2\lambda\sum\limits_{x\in V} d_x f^2(x)\\
&=2\lambda
\end{align*}
Here we used the fact that $\sum\limits_{x\in V} d_x \Delta (f^2)(x)=0$ due to the discrete Divergence theorem (Proposition \ref{proplap}(c)) applied to the function $f^2$, and the fact that $\Delta f=-\lambda f$. Therefore, the total sum of $\Gamma(f)$ is
\begin{equation} \label{totalsumgamma}
\sum\limits_{x\in V} d_x \Gamma(f)(x)=\lambda.
\end{equation}

\noindent Similarly, the total sum of $\Gamma_2(f)$ can be calculated as
\begin{align*}
2\cdot\sum\limits_{x\in V} d_x \Gamma_2(f)(x)
&=\sum\limits_{x\in V} d_x \Delta \Gamma(f)(x)-2\sum\limits_{x\in V} d_x \Gamma(f,\Delta f)(x)\\
&=0-2\sum\limits_{x\in V} d_x \Gamma(f,-\lambda f)(x)\\
&=2\lambda \sum\limits_{x\in V} d_x \Gamma(f)(x)\\
&\stackrel{\eqref{totalsumgamma}}{=}2\lambda^2.
\end{align*}
Therefore,
\begin{equation} \label{totalsumgamma2}
\sum\limits_{x\in V} d_x \Gamma_2(f)(x)=\lambda^2.
\end{equation}

\noindent Moreover, the total sum of $(\Delta f)^2$ is simply
\begin{equation} \label{total3}
\sum\limits_{x\in V} d_x (\Delta f)^2(x)=\sum\limits_{x\in V} d_x (-\lambda f)^2(x)=\lambda^2.
\end{equation}

Combining equations \eqref{totalsumgamma}, \eqref{totalsumgamma2}, and \eqref{total3} into $CD(K,n)$, we obtain $$\lambda^2\ge \frac{1}{n}\lambda^2+K\lambda.$$
\noindent Therefore, $\lambda_1\ge \frac{n}{n-1}K$ as desired.
\end{proof}

\section{Motivation of the defined Laplacian in graphs}

We have seen Laplacian in $\mathbb{R}^n$ with Euclidean metric. In particular when $n=2$, 
$$\Delta f(x) = \frac{\partial^2 f}{\partial x_1^2}(x)+\frac{\partial^2 f}{\partial x_2^2}(x).$$
Express the derivatives in terms of finite differences, 
\begin{align*}
\Delta f(x) =\lim\limits_{h\rightarrow 0} \dfrac{f(x+he_1)+f(x-he_1)+f(x+he_2)+f(x-he_2)-4f(x)}{h^2}
\end{align*}
By discretizing $\mathbb{R}^2$ as $\mathbb{Z}^2$ and set $h=1$, the discrete Laplacian then becomes
\begin{align*}
\Delta f(x) 
&= f(x+e_1)+f(x-e_1)+f(x+e_2)+f(x-e_2)-4f(x)\\
&= \sum\limits_{z\sim x}\Big( f(z)-f(x)\Big)
\end{align*}
\noindent as we treat $x\pm e_i$'s to be the neighbors of $x$ (see Figure \ref{fig:z2laplace}).
\begin{figure}[h] 
\centering
\includegraphics[width= 5cm]{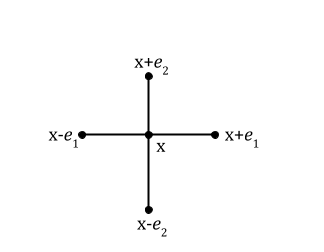}
\caption{Neighbors of $x$ in $\mathbb{Z}^2$}
\label{fig:z2laplace}
\end{figure}

\section{Heat semigroup operator}
In this section, we introduce another operator, namely \textit{heat semigroup} operator, which will be a useful tool in the proof of Bonnet-Myers later on in this chapter.

\begin{definition}
Let $X$ be a space with Laplacian operator $\Delta$.
For $t\in [0,\infty)$, a \textit{heat semigroup} operator $P_t: C(X)\rightarrow C(X)$ is defined by 
$$P_t:=e^{t\Delta}$$ %= \sum\limits_{n=0}^{\infty} \frac{t^n\Delta^n}{n!}$$ 
for all $f\in C(X)$.
\end{definition}

The operator $P_t$ is differentiable in $t$, and its derivative satisfies 
$$\frac{\partial}{\partial t} P_t=\Delta P_t.$$

\noindent Basic properties of $P_t$ are listed in the following proposition

\begin{proposition} \label{heatprop}
Let $P_t$ be the heat semigroup operator defined as in above. Then
\begin{itemize}
\item[(a)] $\Delta P_t = P_t\Delta$
\item[(b)] $f\ge 0$ implies $P_tf\ge 0$
\item[(c)] $||P_t(f)||_\infty \le ||f||_\infty$
\end{itemize}
\end{proposition}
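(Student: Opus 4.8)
The plan is to establish the three properties in order, leveraging the power-series definition $P_t = e^{t\Delta} = \sum_{k=0}^{\infty}\frac{t^k}{k!}\Delta^k$ together with the explicit form of the discrete Laplacian $\Delta f(x) = \frac{1}{d_x}\sum_{z\sim x}\big(f(z)-f(x)\big)$. Part (a) is the easiest: since $\Delta$ commutes with each of its own powers $\Delta^k$, it commutes with every partial sum $\sum_{k=0}^{N}\frac{t^k}{k!}\Delta^k$, and passing to the limit yields $\Delta P_t = P_t\Delta$. This needs only termwise commutativity and convergence of the exponential series.

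For part (b) the key idea is a nonnegativity-preserving decomposition of the semigroup. From the definition one reads off $\Delta = M - \textup{Id}$, where $M := D^{-1}A$ is the averaging operator $(Mf)(x) = \frac{1}{d_x}\sum_{z\sim x}f(z)$. The matrix $M$ has only nonnegative entries and is row-stochastic, since each row sums to $\frac{1}{d_x}\sum_{z\sim x}1 = 1$. Because $\textup{Id}$ commutes with $M$, the exponential factorizes as $P_t = e^{t\Delta} = e^{-t}\,e^{tM}$, and for $t\ge 0$ the factor $e^{tM} = \sum_{k\ge 0}\frac{t^k}{k!}M^k$ is a sum of entrywise nonnegative matrices, hence itself entrywise nonnegative. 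Therefore $P_t$ has nonnegative entries, so $f\ge 0$ forces $P_tf\ge 0$. (Alternatively one could argue by a parabolic maximum principle applied to $u(t) = P_tf$, which solves $\partial_t u = \Delta u$: at a vertex realizing the spatial minimum of $u(t,\cdot)$ one has $\Delta u\ge 0$, so the minimum value is nondecreasing in $t$ and stays $\ge \min f \ge 0$.)

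For part (c) I would reuse part (b) together with the observation that $\Delta\mathbf{1} = 0$ for the constant function $\mathbf{1}$, whence $P_t\mathbf{1} = \mathbf{1}$. Applying (b) to the two nonnegative functions $\|f\|_\infty\,\mathbf{1} - f$ and $\|f\|_\infty\,\mathbf{1} + f$ gives $\|f\|_\infty\,\mathbf{1} \pm P_tf \ge 0$ pointwise, that is $|P_tf(x)| \le \|f\|_\infty$ for every vertex $x$; taking the supremum over $x$ yields $\|P_tf\|_\infty \le \|f\|_\infty$.

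The main obstacle is part (b): making the positivity-preservation rigorous. The factorization $P_t = e^{-t}e^{tM}$ is the crucial trick, and it relies on $\textup{Id}$ and $M$ commuting and on convergence of both exponential series. For finite graphs this is immediate, and the entrywise argument applies directly. For infinite (locally finite) graphs one should first justify that $P_t$ is a well-defined bounded operator on $C(V)$ before invoking the entrywise decomposition, and it is here that the analytic subtleties concentrate; the maximum-principle formulation is the more robust route in that setting.
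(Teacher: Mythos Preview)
Your proof is correct and follows essentially the same route as the paper: the paper also proves (a) via the power series, (b) by writing $\Delta = M - \textup{Id}$ with $M = D^{-1}A$ (it calls this matrix $B$) and using the factorization $P_t = e^{-t}e^{tM}$ with $M$ entrywise nonnegative, and (c) by applying (b) to $\|f\|_\infty\mathbf{1} - f$ together with $P_t\mathbf{1} = \mathbf{1}$. The paper explicitly restricts to finite graphs, so your remarks about the infinite case and the alternative maximum-principle argument go a bit beyond what is needed here.
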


Although this proposition holds in great generality, we will prove it here in the case of normalized Laplacian $\Delta$ on finite graphs.
\begin{proof}
Recall from the definition that $\Delta=D^{-1}(A-\textup{Id})$, which is a bounded operator, so we may write $e^{t\Delta}=\sum\limits_{n=0}^{\infty} \frac{t^n\Delta^n}{n!}$, and thus $$\Delta P_t=\sum\limits_{n=0}^{\infty} \frac{t^n\Delta^{n+1}}{n!}=P_t\Delta$$ \noindent which gives (a). 

Note that $B:=\textup{Id}+D^{-1}(A-\textup{Id})$ is also a bounded operator, and it has all entries nonnegative. Therefore, $$e^{t\Delta}=e^{-t}e^{tB}=e^{-t}\sum\limits_{n=0}^{\infty} \frac{t^nB^n}{n!}$$ also have all of its entries nonnegative. For a function $f\in C(V)$ such that $f\ge 0$, it is represented by a column vector $f$ whose entries are nonnegative. Thus $P_tf= e^{t\Delta}f$ has all entries nonnegative, meaning $P_t(f)\ge 0$.

Lastly, let $F=\max\limits_{x\in V} |f(x)|=\|f\|_\infty$, thus $F\mathbbm{1}-f \ge 0$. Part (b) then implies $$0\le P_t(F\mathbbm{1}-f)=FP_t\mathbbm{1}-P_tf=F\mathbbm{1}-P_tf,$$
\noindent that is $P_tf\le F$ as desired.
\end{proof}

In \cite{GL}, Gong and Lin prove that the condition $CD(K,\infty)$ can be characterized in term of $P_t$ as in the following theorem. This theorem serves as a part in the proof of Bonnet-Myer's theorem. 
%**Give a brief introduction about Ledoux-Bakry theorem. [details in Gong, Lin, 2015]

\begin{theorem}\label{LB}
If $G=(V,E)$ satisfies $CD(K,\infty)$ condition, then
\begin{align} \label{LBeq}
\Gamma(P_tf)(x) \le e^{-2Kt}P_t(\Gamma f)(x)
\end{align}
\noindent for all $x\in V$ and all bounded $f:V\rightarrow \mathbb{R}$.
\end{theorem}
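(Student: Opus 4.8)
The plan is to interpolate between the two sides of the inequality along the heat flow. Concretely, for a fixed point $x\in V$, a fixed time $t>0$, and the given bounded function $f$, I would define the auxiliary function
$$\varphi(s) := e^{-2Ks} P_s\big(\Gamma(P_{t-s}f)\big)(x), \qquad s\in[0,t].$$
At the endpoints this function encodes exactly the two quantities we wish to compare: at $s=0$ we have $\varphi(0)=\Gamma(P_t f)(x)$, and at $s=t$ we have $\varphi(t)=e^{-2Kt}P_t(\Gamma f)(x)$. Thus the desired inequality \eqref{LBeq} is equivalent to the monotonicity statement $\varphi(0)\le \varphi(t)$, and it suffices to show $\varphi'(s)\ge 0$ for all $s\in[0,t]$.

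The second step is to differentiate $\varphi$ and simplify. Using the semigroup identities from Proposition \ref{heatprop}, namely $\frac{\partial}{\partial s}P_s=\Delta P_s=P_s\Delta$ and $\frac{\partial}{\partial s}P_{t-s}f=-\Delta P_{t-s}f$, together with the bilinearity of $\Gamma$, the derivative produces three terms: one from the factor $e^{-2Ks}$, one from differentiating $P_s$ (which brings down a $\Delta$ acting on $\Gamma(P_{t-s}f)$), and one from differentiating the inner $P_{t-s}f$ (which contributes a term involving $\Gamma(P_{t-s}f,\Delta P_{t-s}f)$). Writing $g:=P_{t-s}f$ for brevity, a direct computation should collapse this to
$$\varphi'(s)=e^{-2Ks}P_s\Big(\Delta\Gamma(g)-2\Gamma(g,\Delta g)-2K\,\Gamma(g)\Big)(x).$$
The key is to recognize the bracketed expression using the definition \eqref{Gamma2} of $\Gamma_2$: since $2\Gamma_2(g)=\Delta\Gamma(g)-2\Gamma(g,\Delta g)$, the bracket is exactly $2\big(\Gamma_2(g)-K\Gamma(g)\big)$.

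The third step invokes the hypotheses. By the $CD(K,\infty)$ condition applied to the function $g=P_{t-s}f$, we have $\Gamma_2(g)(y)\ge K\Gamma(g)(y)$ at every vertex $y$ (the $\frac1n(\Delta g)^2$ term vanishes in the limit $n\to\infty$), so the bracket is pointwise nonnegative. Since $P_s$ preserves nonnegativity by Proposition \ref{heatprop}(b), and $e^{-2Ks}>0$, we conclude $\varphi'(s)\ge 0$, which completes the argument.

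The main obstacle I anticipate is twofold. First, the differentiation step requires justifying that one may differentiate under the action of $P_s$ and interchange $\frac{\partial}{\partial s}$ with the (infinite) series defining the semigroup; on a \emph{finite} graph this is routine since $\Delta$ is a bounded operator and everything reduces to finite-dimensional matrix calculus, but for general $G$ one must be careful about the domain and boundedness of $f$, which is presumably why the statement restricts to bounded $f$. Second, the algebraic collapse of $\varphi'(s)$ into a clean multiple of $\Gamma_2(g)-K\Gamma(g)$ depends on the chain rule for $\Gamma$ and on the commutation $\Delta P_s=P_s\Delta$; keeping track of the factors of $2$ coming from the convention $2\Gamma(f,g)=\Delta(fg)-f\Delta g-g\Delta f$ is the only genuinely delicate bookkeeping, and I would double-check it against the identity $\Gamma(f)=\langle\grad f,\grad f\rangle$ before trusting the final form.
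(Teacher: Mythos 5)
Your proposal is correct and follows essentially the same route as the paper: the paper also defines $F(s)=e^{-2Ks}P_s(\Gamma P_{t-s}f)(x)$, differentiates, uses $\frac{\partial}{\partial s}P_s=\Delta P_s=P_s\Delta$ together with $2\Gamma_2(g)=\Delta\Gamma(g)-2\Gamma(g,\Delta g)$ to collapse the derivative to $e^{-2Ks}P_s\big(2\Gamma_2(g)-2K\Gamma(g)\big)(x)$, and concludes via $CD(K,\infty)$ and positivity preservation of $P_s$. Your bookkeeping of the factors of $2$ matches the paper's, and your caveat about boundedness of $\Delta$ on infinite graphs is a fair observation that the paper likewise sidesteps by proving Proposition \ref{heatprop} only for finite graphs.
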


\begin{proof}
Fix $x\in V$ and $t\in [0,\infty)$. Define a real function $F(s)$ on $s\in [0,t]$ as 
\begin{equation*}
F(s):=e^{-2Ks}P_s(\Gamma P_{t-s} f) (x)
\end{equation*}

Note that $F(0)=\Gamma (P_t f) (x)$ and $F(t)=e^{-2Kt}P_t\Gamma f(x)$ are the terms on left-hand side and right-hand side of the inequality \eqref{LBeq}. We need $F(0)\le F(t)$, so it suffices to prove that $F'(s)\ge 0$ for all $0<s<t$.\\

\noindent Product rule and chain rule of the differentiation give
\begin{align*}
F'(s)=e^{-2Ks}\bigg[-2K P_s(\Gamma P_{t-s}f)(x) + (\frac{\partial}{\partial s} P_s)(\Gamma P_{t-s}f)(x) + P_s(\frac{\partial}{\partial s}\Gamma(P_{t-s}f))(x)\bigg]
\end{align*}
With the relation $\frac{\partial}{\partial s} P_s = \Delta P_s = P_s\Delta$ substituted into the second term in the bracket above, we can then pull out $P_s$ and obtain
$F'(s)=e^{-2Ks} P_s h(x)$ where $h$ denotes the operator
\begin{align*}
h:= -2K \Gamma P_{t-s}f + \Delta(\Gamma P_{t-s}f) + \frac{\partial}{\partial s}\Gamma(P_{t-s}f)
\end{align*}
Moreover, in the last term, 
\begin{align*}
\frac{\partial}{\partial s}\Gamma(P_{t-s}f) 
&=\frac{\partial}{\partial s}  \Gamma(P_{t-s}f, P_{t-s}f)\\
&=2\Gamma\Big(\frac{\partial}{\partial s}(P_{t-s}f), P_{t-s}f\Big)\\
&=-2\Gamma(\Delta P_{t-s}f, P_{t-s}f)\\
&\stackrel{\makebox[0pt]{\small def}}{=} 2\Gamma_2(P_{t-s}f)-\Delta(\Gamma P_{t-s}f)
\end{align*}

\noindent Hence, 
\begin{align*}
h= -2K \Gamma P_{t-s}f + 2\Gamma_2(P_{t-s}f) \ge 0
\end{align*}
due to the condition $CD(K,\infty)$. The proposition \eqref{heatprop} implies that $P_s h \ge 0$, which gives $F'(s)\ge 0$ as desired.

\end{proof}

\section{Bakry-\'Emery Bonnet-Myers}

Bonnet-Myers in the sense of Bakry-\'Emery states that a graph with strictly positive Bakry-\'Emery curvature bounded away from zero must be a finite graph, and the bound of diameter can be estimated in term of curvature.
Here, we give a proof in case of $\infty$-dimension. The theorem also holds for any dimension $n<\infty$ but with a different bound on diameter (see \cite[Theorem 2.4]{LMP}).

\begin{theorem}[B-E Bonnet-Myers]
Let $G$ be a connected graph satisfying $CD(K,\infty)$ condition for some $K>0$. Then $G$ is finite and $$\textup{diam } G \le \frac{2}{K}$$ \end{theorem}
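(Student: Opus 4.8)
The plan is to reduce the theorem to a single distance estimate: for any two vertices $x,y\in V$, show $d(x,y)\le\frac{2}{K}$. Once this is established, finiteness is automatic, since $G$ is connected and has finite degree at every vertex; the diameter bound forces $V=B_{\lceil 2/K\rceil}(x)$, and this ball is finite by induction on the radius (each sphere is finite because degrees are finite). So the whole effort goes into the distance estimate, and the engine will be the gradient estimate of Theorem \ref{LB} pushed through the heat semigroup.

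First I would record an integrated (zeroth-order) form of the gradient estimate. For a fixed bounded $f$, combining $\frac{\partial}{\partial t}P_t=\Delta P_t$ with Proposition \ref{proplap}(b) and Theorem \ref{LB} gives, for every $x$ and every $s\ge 0$,
$$|\Delta P_sf(x)|\le\sqrt{2\Gamma(P_sf)(x)}\le\sqrt{2e^{-2Ks}P_s(\Gamma f)(x)}\le e^{-Ks}\sqrt{2\|\Gamma f\|_\infty},$$
where the last step uses $P_s(\Gamma f)\le\|\Gamma f\|_\infty$ (Proposition \ref{heatprop}). Integrating the identity $P_tf(x)-f(x)=\int_0^t\Delta P_sf(x)\,ds$ then yields the key bound, uniform in $t$,
$$|P_tf(x)-f(x)|\le\int_0^t e^{-Ks}\sqrt{2\|\Gamma f\|_\infty}\,ds\le\frac{\sqrt{2\|\Gamma f\|_\infty}}{K}.$$

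Next I would show that the middle term in the triangle inequality
$$|f(x)-f(y)|\le|f(x)-P_tf(x)|+|P_tf(x)-P_tf(y)|+|P_tf(y)-f(y)|$$
vanishes as $t\to\infty$. Fix a path $x=v_0\sim v_1\sim\cdots\sim v_m=y$ in the connected graph $G$. For adjacent vertices the definition of $\Gamma$ gives $(P_tf(v_i)-P_tf(v_{i+1}))^2\le d_{v_i}\cdot 2\Gamma(P_tf)(v_i)\le 2d_{v_i}e^{-2Kt}\|\Gamma f\|_\infty$, and summing along this fixed finite path shows $|P_tf(x)-P_tf(y)|\to 0$. Letting $t\to\infty$ in the triangle inequality, together with the uniform bound above, leaves $|f(x)-f(y)|\le\frac{2\sqrt{2\|\Gamma f\|_\infty}}{K}$.

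Finally I would choose the test function $f(z):=\min\!\big(d(x,z),\,d(x,y)\big)$, which is bounded, satisfies $f(x)=0$ and $f(y)=d(x,y)$, and is $1$-Lipschitz along edges, so $2\Gamma f(z)=\frac{1}{d_z}\sum_{w\sim z}(f(w)-f(z))^2\le 1$ and hence $\sqrt{2\|\Gamma f\|_\infty}\le 1$. Substituting gives $d(x,y)\le\frac{2}{K}$, as required. The main obstacle I anticipate is exactly the vanishing of the middle term without already knowing that $G$ is finite: the naive path bound carries degree factors, so the argument must exploit that $\Gamma(P_tf)$ decays uniformly in $t$ while the chosen path is fixed and finite (using finiteness of degrees). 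The truncation of the distance function is equally essential, since the gradient estimate of Theorem \ref{LB} is only available for bounded $f$.
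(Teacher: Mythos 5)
Your proposal is correct and follows essentially the same route as the paper: the triangle inequality through $P_tf$, the pointwise bound $|\Delta P_sf|\le e^{-Ks}\sqrt{2\|\Gamma f\|_\infty}$ from Proposition \ref{proplap}(b), Theorem \ref{LB} and Proposition \ref{heatprop}(c), integration to control $|f-P_tf|$, and the decay of $\Gamma(P_tf)$ to kill the middle term along a fixed path. Your test function $\min(d(x,z),d(x,y))$ is just $L$ minus the paper's $\max\{L-d(x_0,z),0\}$, and your explicit remark on why the diameter bound plus local finiteness gives finiteness is a detail the paper leaves implicit.
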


\begin{proof}
Consider arbitrary vertices $x_0,y_0\in V$, and let $L=d(x_0,y_0)$ be the length of shortest path(s) connecting $x_0$ and $y_0$.   Define a function $f:V\rightarrow \mathbb{R}$
\begin{align*}
f(x):= max\{ L-d(x_0,x), 0\},
\end{align*}
\noindent so that $f$ is bounded, and that $f(x_0)=L$ and $f(y_0)=0$. 

\noindent By triangle inequality, 
\begin{align*}
L&=|f(x_0)-f(y_0)| \\
&\stackrel{\triangle}{\le} |f(x_0)- P_tf(x_0)| + |P_tf(x_0)-P_tf(y_0)|+ |P_tf(y_0)-f(y_0)|
\end{align*}
holds for all $t>0$.

\noindent The next two steps are to prove that $|f(x)- P_tf(x)| \le \frac{1}{K}$ holds for any $x$, and that $|P_tf(x_0)-P_tf(y_0)| \rightarrow 0$ as $t\rightarrow \infty$. This will guarantee $L\le \frac{2}{K}$.
\begin{itemize}
\item
First, the fundamental theorem of calculus gives
\begin{align*}
|f(x)- P_tf(x)| \le \int\limits_0^t \Big|\frac{\partial}{\partial s}P_sf(x)\Big| ds 
= \int\limits_0^t \Big| \Delta P_sf(x)\Big| ds
\end{align*}
where
\begin{equation*}
\Big| \Delta P_sf(x)\Big| \le \sqrt{2\Gamma(P_s f)(x)}
\le e^{-Ks} \sqrt{P_s(2\Gamma f)(x)} \le e^{-Ks} \sqrt{2||\Gamma f||_\infty} \le e^{-Ks} 
\end{equation*} by Proposition \ref{proplap}(b), Theorem \ref{LB}, and Proposition \ref{heatprop}(c), and
because $2\Gamma f(z) =\frac{1}{d_{z}}\sum\limits_{y\sim z} (f(y)-f(z))^2\le \frac{1}{d_{z}}\sum\limits_{y\sim z} 1 = 1$ for all $z\in V$.\\\\
Therefore, 
\begin{align*}
|f(x)- P_tf(x)| \le \int\limits_0^t e^{-Ks} ds \le \frac{1}{K} 
\end{align*}
\item
Second, it suffices to prove that $|P_tf(x)-P_tf(z)| \rightarrow 0$ as $t\rightarrow \infty$ for any neighboring vertices $x\sim z$, and then using again the triangle inequality to deal with vertices at longer distance.

\noindent As in the previous part, 
\begin{align*}
e^{-Kt} \ge \sqrt{2\Gamma(P_t f)(x)}  &\stackrel{\mbox{def}}{=} \sqrt{\frac{1}{d_x}\sum\limits_{y: y\sim x}|P_tf(y)-P_tf(x)|^2} \\
&\ge \frac{1}{\sqrt{d_x}} |P_tf(x)-P_tf(z)|
\end{align*}
holds for any $z\sim x$.
Taking $t\rightarrow \infty$, we obtain $|P_tf(x)-P_tf(z)|\rightarrow 0$ as desired.
\end{itemize}
\end{proof}

\chapter{Ollivier's Ricci Curvature} \label{ORCchap}

Ollivier's Ricci curvature notion is motivated from the ``phenominon'' (which is the exact word that Ollivier chose to describe in  \cite[pp.~4]{Ollivier}) that Ricci curvature determines whether the average distance of two balls around $x$ and $y$ is larger or smaller than the distance between $x$ and $y$ (see Section \ref{avgsec}). Ollivier regards this average distance of two balls as ``transportation distance'' between two measures. We shall start with the concept of transportation distance (namely Wasserstein distance). Alternatively, see Villani's \cite{Villani} for a broader introduction to this topic.

\begin{definition}[Transport plan]
Let $G=(V,E)$ be a locally-finite and connected graph. Let $$P(V):=\left\{\mu:V\rightarrow [0,1] \ \bigg| \ \sum\limits_{x\in V}\mu(x)=1, \textup{ and } \mu(x)>0 \textup{ for finitely many } x\textup{'s} \right\}$$ be the space of all probability measures on $V$ with finite supports. 

Given any $\mu,\nu \in P(V)$, a \textit{transport plan} (or, in short, a \textit{plan}) from $\mu$ to $\nu$ is a function $\pi: V\times V \rightarrow [0,1]$ satisfying 
$$\mu(x) = \sum\limits_{y\in V} \pi(x,y) \ \ \ \ \mbox{ and }\ \ \ \  \nu(y)=\sum\limits_{x\in V} \pi(x,y).$$

\noindent Furthermore, define $\prod(\mu,\nu)$ to be the set of all transport plans from $\mu$ to $\nu$, and the (transportation) cost of a plan $\pi$ is given by $$\textup{cost}(\pi)=\sum\limits_{x,y\in V} d(x,y)\pi(x,y).$$

%define the support of a plan $\pi\in \prod(\mu,\nu)$ to be $$\textup{supp }\pi := \{(x,y)\in V\times V: \ \pi(x,y)>0\}.$$
\end{definition}

In words, $\pi(x,y)$ represents the amount of mass being transport from a vertex $x$ to a vertex $y$ according to the plan $\pi$. The cost for transporting per unit mass is the (combinatorial) distance function $d$.

\begin{definition}[Wasserstein metric/distance] 
Given any $\mu,\nu \in P(V)$, the Wasserstein metric is a function $W_1: P(V)\times P(V) \rightarrow [0,\infty)$ defined by
\begin{equation} \label{W1}
W_1(\mu,\nu) := \inf\limits_{\pi\in\prod(\mu,\nu)} \sum\limits_{x,y\in V} d(x,y)\pi(x,y).
\end{equation}
\end{definition}

A plan $\pi$ that yields the infimum in $\eqref{W1}$ is called \textit{optimal transport plan}.

\begin{proposition}
$W_1$ is indeed a distance function on $P(V)$, i.e. it satisfies the metric axioms.
\end{proposition}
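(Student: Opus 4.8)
The plan is to verify the four metric axioms for $W_1$ in turn: non-negativity, symmetry, the identity of indiscernibles, and the triangle inequality. Non-negativity is immediate, since every plan $\pi\in\prod(\mu,\nu)$ has $d(x,y)\ge 0$ and $\pi(x,y)\ge 0$, so each cost is a sum of non-negative terms and the infimum is $\ge 0$; it is moreover finite, because $\mu,\nu$ have finite support and $d$ takes finite values on the connected graph $G$, so $W_1$ genuinely maps into $[0,\infty)$. For symmetry, I would send a plan $\pi\in\prod(\mu,\nu)$ to its transpose $\pi^{T}(x,y):=\pi(y,x)$; the marginal conditions show $\pi^{T}\in\prod(\nu,\mu)$, and since $d$ is symmetric the two plans have equal cost. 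This produces a cost-preserving bijection between $\prod(\mu,\nu)$ and $\prod(\nu,\mu)$, so the two infima agree and $W_1(\mu,\nu)=W_1(\nu,\mu)$.

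For the identity of indiscernibles, one direction uses the diagonal plan $\pi(x,y)=\mu(x)\delta_{xy}$, which lies in $\prod(\mu,\mu)$ and has cost $\sum_x d(x,x)\mu(x)=0$, giving $W_1(\mu,\mu)=0$. For the converse I first observe that, since $\mu$ and $\nu$ have finite support, $\prod(\mu,\nu)$ is a closed and bounded subset of a finite-dimensional space — a polytope cut out by the linear marginal constraints — and the cost is a linear (hence continuous) functional, so the infimum is attained by some optimal plan $\pi^{*}$. If $W_1(\mu,\nu)=0$, then $d(x,y)\pi^{*}(x,y)=0$ for all $x,y$, and because $d(x,y)>0$ whenever $x\ne y$ on a connected graph, $\pi^{*}$ must be supported on the diagonal. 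Reading off the marginals then forces $\mu(x)=\pi^{*}(x,x)=\nu(x)$ for every $x$, i.e. $\mu=\nu$.

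The main obstacle is the triangle inequality, which I would prove by the \emph{gluing} construction. Given $\mu,\nu,\rho$, take optimal plans $\pi_1\in\prod(\mu,\nu)$ and $\pi_2\in\prod(\nu,\rho)$ and define a candidate plan from $\mu$ to $\rho$ by
$$\pi(x,z):=\sum_{y:\,\nu(y)>0}\frac{\pi_1(x,y)\,\pi_2(y,z)}{\nu(y)},$$
where restricting to $\nu(y)>0$ is harmless since the marginals force $\pi_1(x,y)=\pi_2(y,z)=0$ when $\nu(y)=0$. Summing over $z$ and using $\sum_z\pi_2(y,z)=\nu(y)$ recovers $\sum_z\pi(x,z)=\mu(x)$, and symmetrically $\sum_x\pi(x,z)=\rho(z)$, so $\pi\in\prod(\mu,\rho)$. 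Then applying $d(x,z)\le d(x,y)+d(y,z)$ inside $\textup{cost}(\pi)$ and splitting the resulting triple sum collapses the two pieces — again via $\sum_z\pi_2(y,z)=\nu(y)$ and $\sum_x\pi_1(x,y)=\nu(y)$ — into $\textup{cost}(\pi_1)+\textup{cost}(\pi_2)=W_1(\mu,\nu)+W_1(\nu,\rho)$. Since $W_1(\mu,\rho)\le\textup{cost}(\pi)$, the inequality follows. The only subtlety worth tracking carefully is the bookkeeping around the $\nu(y)=0$ terms and the honest verification that $\pi$ has the correct marginals before estimating its cost; everything else is routine manipulation of finite sums.
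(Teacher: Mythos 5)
Your proof is correct and follows essentially the same route as the paper: symmetry via the transposed plan, and the triangle inequality via the gluing construction $\pi(x,z)=\sum_{y:\nu(y)>0}\pi_1(x,y)\pi_2(y,z)/\nu(y)$ with the same bookkeeping around $\nu(y)=0$. You are somewhat more thorough on the identity of indiscernibles (the paper dismisses it as ``easily by contraposition''), and your compactness argument for the existence of an optimal plan is a clean way to justify using optimal $\pi_1,\pi_2$ --- though one can also avoid attainment entirely, as the paper does, by running the gluing argument for arbitrary plans and then passing to the infimum.
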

\begin{proof}
	
Symmetry property $W_1(\mu,\nu)=W_1(\nu,\mu)$ is obvious: for every transport plan $\pi\in\prod(\mu,\nu)$, the ``reverse'' plan $\pi^{(-1)}\in\prod(\nu,\mu)$ defined by $$\pi^{(-1)}(x,y):=\pi(y,x)$$ for all $x,y$ costs the same as the plan $\pi$. Identity property: $W_1(\mu,\nu)=0\ \Rightarrow \mu=\nu$ can also be proved easily by contraposition. The only non-trivial property to be checked is the triangle inequality:
$$W_1(\mu,\nu)+W_1(\nu,\rho) \ge W_1(\mu,\rho)$$ for all probability measures $\mu,\nu,\rho\in P_1(V)$.

Assume $\pi_1\in\prod(\mu,\nu)$ and $\pi_2\in\prod(\nu,\rho)$. Construct $\pi_3:V\times V\rightarrow[0,1]$ to be $$\pi_3(x,z):=\sum\limits_{y}^* \frac{\pi_1(x,y)\pi_2(y,z)}{\nu(y)}$$
for all $x,z\in V$. Here $\sum\limits_y^*$ means the sum is taken over $y$ such that $\nu(y)\not=0$.\\
Then observe that 
\begin{align*}
\sum\limits_{z}\pi_3(x,z) &= \sum\limits_{z}\sum\limits_y^* \frac{\pi_1(x,y)\pi_2(y,z)}{\nu(y)}\\
&=\sum\limits_y^* \frac{\pi_1(x,y)}{\nu(y)}\bigg(\sum\limits_{z} \pi_2(y,z)\bigg)\\
&=\sum\limits_y^*  \pi_1(x,y)  && (\pi_2\in\prod(\nu,\rho))\\
&=\sum\limits_y  \pi_1(x,y)=\mu(x) && (\pi_1\in\prod(\mu,\nu))
\end{align*} because $\pi_1(x,y)=0$ for all $x$ and $y$ such that $\nu(y)=0$.

Similarly, one can check that $\sum\limits_{x}\pi_3(x,z)= \rho(z)$. Thus $\pi_3\in \prod(\mu,\rho).$ 
Moreover, the total cost of $\pi_3$ is less than or equal to the cost of $\pi_1$ and the cost of $\pi_2$ combined:
\begin{align*}
\sum\limits_{x,z} d(x,z)\pi_3(x,z)
&=\sum\limits_{x,z}\sum\limits_y^* d(x,z)\cdot\frac{\pi_1(x,y)\pi_2(y,z)}{\nu(y)}\\
&\stackrel{\triangle}{\le}\sum\limits_{x,z}\sum\limits_y^* \bigg(d(x,y)\frac{\pi_1(x,y)\pi_2(y,z)}{\nu(y)}+ d(y,z)\frac{\pi_1(x,y)\pi_2(y,z)}{\nu(y)}\bigg)\\
&=\sum\limits_{x}\sum\limits_{y}^*d(x,y)\pi_1(x,y)+ \sum\limits_{z}\sum\limits_{y}^*d(y,z)\pi_2(y,z)\\
&=\sum\limits_{x,y}d(x,y)\pi_1(x,y)+\sum\limits_{y,z}d(y,z)\pi_2(y,z)
\end{align*}

By considering over all $\pi_1\in\prod(\mu,\nu)$ and $\pi_2\in\prod(\nu,\rho)$, we can then conclude $W_1(\mu,\rho)\le W_1(\mu,\nu)+W_1(\nu,\rho)$.
\end{proof}

Wasserstein distance $W_1(\mu,\nu)$ represents a minimal total cost (when considered among all possible plans) of transporting masses which are distributed as in $\mu$ to masses which are distributed as in $\nu$. The subscript $1$ in $W_1$ indicates that the cost function is $d^1$. 

In general, calculating $W_1(\mu,\nu)$ directly by finding an optimal transport plan $\pi$ can be very difficult. An easier alternative method is via the following Kantorovich Duality Theorem (see \cite[pp.~19]{Villani}, or alternatively see further discussion in Section \ref{sect:LP}).

\begin{theorem}[Kantorovich Duality] \label{duality}
\begin{equation} \label{dual}
\inf\limits_{\pi\in\prod(\mu,\nu)} \sum\limits_{x,y\in V} d(x,y)\pi(x,y) 
= \sup\limits_{\Phi\in \textup{1-Lip}} \sum\limits_{x\in V} \Phi(x)\bigg(\mu(x)-\nu(x)\bigg)
\end{equation}
\noindent where $\textup{1-Lip}=\Big\{ f\in C(V)\ \Big| \ |f(x)-f(y)|\le d(x,y)\Big\}$ is the space of all Lipschitz continuous functions on $V$ with Lipschitz constant 1.
Such 1-Lipschitz function $\Phi$ yielding the maximum is called an optimal Kantorovich potential.
\end{theorem}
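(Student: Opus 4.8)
The plan is to recognise \eqref{dual} as an instance of linear programming duality and to prove the two inequalities separately. First I would establish the easy inequality $\sup\le\inf$ (weak duality). Given any plan $\pi\in\prod(\mu,\nu)$ and any $\Phi\in\textup{1-Lip}$, I would use the marginal constraints $\sum_y\pi(x,y)=\mu(x)$ and $\sum_x\pi(x,y)=\nu(y)$ to rewrite $\sum_x\Phi(x)(\mu(x)-\nu(x))=\sum_{x,y}\big(\Phi(x)-\Phi(y)\big)\pi(x,y)$, and then bound $\Phi(x)-\Phi(y)\le|\Phi(x)-\Phi(y)|\le d(x,y)$ together with $\pi\ge 0$ to obtain $\sum_x\Phi(x)(\mu(x)-\nu(x))\le\sum_{x,y}d(x,y)\pi(x,y)$. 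Taking the supremum over $\Phi$ and the infimum over $\pi$ yields $\sup\le\inf$.

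For the reverse inequality I would pass through the standard Kantorovich dual. Because $\mu$ and $\nu$ have finite support, only the finitely many variables $\pi(x,y)$ with $x\in\textup{supp}(\mu)$ and $y\in\textup{supp}(\nu)$ matter, so the left-hand side is a genuine finite linear program over the nonempty, compact transportation polytope, and the infimum is attained. Introducing dual variables $\alpha(x),\beta(y)$ for the two families of marginal constraints, the LP dual is to maximise $\sum_x\alpha(x)\mu(x)+\sum_y\beta(y)\nu(y)$ subject to $\alpha(x)+\beta(y)\le d(x,y)$ for all $x,y$. Invoking strong LP duality (as developed in Section \ref{sect:LP}) produces an optimal feasible pair $(\alpha,\beta)$ whose value equals the primal optimum.

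It then remains to replace $(\alpha,\beta)$ by a single $1$-Lipschitz potential without decreasing the objective, and this is where the metric structure of $d$ enters. I would define the $c$-transform $\tilde\alpha(x):=\min_y\big(d(x,y)-\beta(y)\big)$. Feasibility forces $\tilde\alpha\ge\alpha$ pointwise, so the objective does not decrease (since $\mu\ge 0$), while the triangle inequality gives $|\tilde\alpha(x)-\tilde\alpha(x')|\le d(x,x')$, i.e.\ $\tilde\alpha\in\textup{1-Lip}$. Transforming once more via $\tilde\beta(y):=\min_x\big(d(x,y)-\tilde\alpha(x)\big)$, the $1$-Lipschitz bound on $\tilde\alpha$ together with the choice $x=y$ pins $\tilde\beta(y)=-\tilde\alpha(y)$, so the dual objective collapses to $\sum_x\tilde\alpha(x)\big(\mu(x)-\nu(x)\big)$. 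Taking $\Phi:=\tilde\alpha$ thus exhibits a $1$-Lipschitz function whose value meets the primal optimum, giving $\sup\ge\inf$ and closing the circle.

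The main obstacle is the strong-duality step: weak duality and the $c$-transform manipulations are elementary consequences of the marginal constraints and the metric axioms, but the equality of $\inf$ with the dual optimum is exactly the content of LP strong duality and cannot be extracted from the direct estimates alone. If one prefers to avoid quoting the LP theorem, the same conclusion can be obtained by a Hahn--Banach / separating-hyperplane argument on the compact convex set of achievable marginals; in the finite-support setting, however, invoking the finite-dimensional LP duality theorem is the cleanest route.
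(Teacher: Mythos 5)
Your argument is correct. The paper itself does not prove this theorem --- it cites Villani and, in Section \ref{sect:LP}, merely sets up the primal/dual linear programs and remarks that strong LP duality ``is essentially'' the Kantorovich statement. Your proposal follows that same LP route but actually completes it, and the two places where you go beyond the paper are exactly the places that need care. First, your weak-duality computation, rewriting $\sum_x\Phi(x)(\mu(x)-\nu(x))=\sum_{x,y}(\Phi(x)-\Phi(y))\pi(x,y)$ via the marginal constraints, is the honest justification of the inequality $\sup\le\inf$ that the paper never records. Second, and more importantly, the LP dual naturally has \emph{two independent} families of variables $\alpha(x),\beta(y)$ constrained by $\alpha(x)+\beta(y)\le d(x,y)$, whereas the theorem asserts a supremum over a \emph{single} $1$-Lipschitz potential; the paper's Section \ref{sect:LP} silently identifies the two, which hides both the sign convention and the need for the metric axioms. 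Your double $c$-transform (passing to $\tilde\alpha(x)=\min_y(d(x,y)-\beta(y))$, checking $\tilde\alpha\ge\alpha$ and $1$-Lipschitzness from the triangle inequality, then showing $\tilde\beta=-\tilde\alpha$) is precisely the missing bridge, and it is where the hypothesis that $d$ is a metric actually enters. The only step you quote as a black box is finite-dimensional strong LP duality, which is legitimate here because the finite supports of $\mu$ and $\nu$ reduce everything to a nonempty compact transportation polytope, as you note. In short: same skeleton as the paper's intended argument, but yours is the complete version.
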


The method is to find a plan $\pi\in \prod(\mu,\nu)$ and a function $\phi\in\textup{1-Lip}$ such that 
\begin{equation} \label{dualcheck}
\sum\limits_{x,y\in V} d(x,y)\pi(x,y) 
=\sum\limits_{x\in V} \Phi(x)\bigg(\mu(x)-\nu(x)\bigg).
\end{equation}

Then Duality Theorem asserts that such $\pi$ and $\phi$ are an optimal transport plan and an optimal Kantorovich potential, respectively, and the terms in \eqref{dualcheck} must have the value of $W_1(\mu,\nu)$. An explicit calculation of $W_1(\mu,\nu)$ will be shown in Example \ref{ex: Kcalc}.

\section{Definition of Ollivier's Ricci curvature}

Let $G=(V,E)$ be a graph. Consider a transition matrix $P$ defining a lazy simple random walk on $G$ with the probability $p$ to stay unmoving at any vertex (and hence $p$ is called $\textup{idleness}$ parameter) and equal probability to move to any one of its neighbor. In other words, the probability of moving from $x$ to $y$ in one-time step is
\begin{align*}
P_{xy}:=\begin{cases} 
p  &,\  y=x\\ 
\frac{1-p}{d_x}  &,\ y\sim x\\
0&,\ \mbox{otherwise.} \end{cases}
\end{align*}

\begin{definition}
Let $G=(V,E)$ be a graph. For any vertex $x\in V$, let the measure $\delta_x\in P_1(V)$ be the Dirac measure, that is,
\begin{align*}
\delta_x(z):=\begin{cases} 1  &,\  z=x\\ 0 &,\ \mbox{otherwise.} \end{cases}
\end{align*}
Further, for $p\in[0,1]$, define a probability measure $\mu_x^p:=P\delta_x$, that is,
\begin{align*}
\mu_x^p(z):=\begin{cases} p  &,\  z=x\\ \frac{1-p}{d_x} &,\ z\sim x \\ 0 &,\ \mbox{otherwise} \end{cases}
\end{align*}
\end{definition}

The Ollivier's Ricci curvature (with idleness $p$) is defined at a pair of (different) vertices $x,y\in V$ as 
$$K_p(x,y) := 1-\frac{W_1(\mu_x^p, \mu_y^p)}{d(x,y)}.$$

%Equivalently, the definition can be written as $$W_1(\mu_x^p, \mu_y^p) = d(x,y) (1-K_p(x,y)),$$

The motivation behind the definition of this curvature notion comes from the estimation \eqref{avg_dist}.  The average distance $\overline{d}(B_r(x),B_r(y))$ can be realized as $W_1(\mu_x^p, \mu_y^p)$, and then the term $K_p(x,y)$ is essentially approximated to the Ricci term (up to some constant factor).

For different values of idleness $p$, the distribution $\mu^p_x$ looks differently around $x$. For example, when $p=0$, $\mu^p_x$ resembles a uniformly distributed sphere, while for $p=\frac{1}{d_x+1}$, $\mu^p_x$ resembles a uniformly distributed ball (see Figure \ref{fig:idleness})
 
\begin{figure}[h] 
\centering
\includegraphics[width= 14cm]{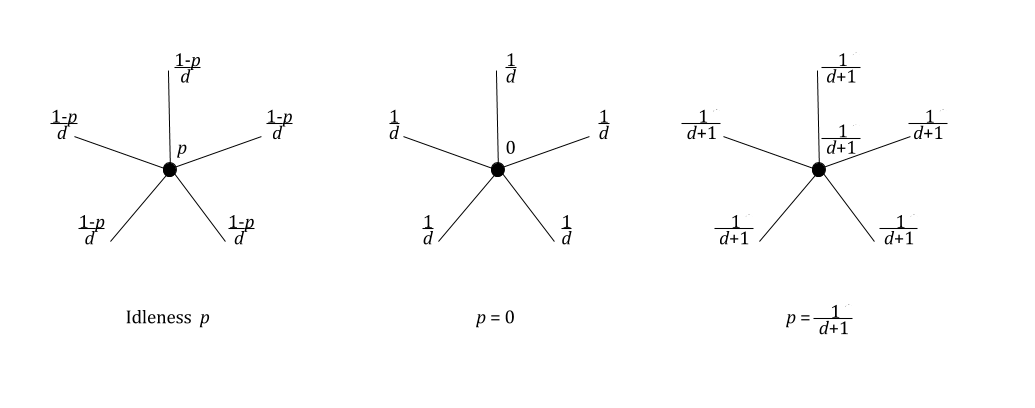}
\caption{Distribution $\mu_x^p$ with idleness $p$, $0$, $\frac{1}{d+1}$, respectively}
\label{fig:idleness}
\end{figure}

When $p=1$, $W_1(\mu_x^p, \mu_y^p)=W_1(\mu_x, \mu_y)=d(x,y)$ which implies $K_1= 0$ identically. Further, Lin-Lu-Yau \cite{LLY} introduced the curvature notion $$K_{LLY} (x,y) := \lim\limits_{p\rightarrow 1^-} \frac{K_p(x,y)}{1-p},$$ which a further insight has been proved in \cite{BCLMP} and \cite{CK} that certainly, 
\begin{align} \label{eqn: scaleLLY}
\frac{K_p(x,y)}{1-p}=K_{LLY} (x,y)
\end{align}
holds for all $x,y\in V$ and all $p\in [\frac{1}{2},1)$.\\

Let us provide an example of how to calculate $K_p(x,y)$, in case $x$ and $y$ are neighbors.
\begin{example} \label{ex: Kcalc}
%First, observe that $W_1(\mu_x^p, \mu_y^p) \le 3$ because one can always transport a portion of mass from any neighbor of $x$ to any neighbor of $y$ through $x$ and $y$ in a total distance at most $3$. Moreover, this transportation can be improved if there are shortcuts between neighbors of $x$ and neighbors of $y$, rather than going through $x$ and $y$. 

Given a graph $G$ as shown in Figure \ref{fig:ollivierpic}. We will consider the transport problem from $\mu_x^p$ (where masses distributed at $x$ for $p$ unit and at $u,w,y$ for $\frac{1-p}{3}$ unit each) to $\mu_y^p$ (where masses distributed at $y$ for $p$ unit and at $x,v,z$ for $\frac{1-p}{3}$ unit each). In consideration of two possible cases, depending on the value of idleness $p$ whether $p\le\frac{1-p}{3}$ or not, we construct in each case a transport plan from $\mu_x^p$ to $\mu_y^p$ that we claim to be optimal.

\begin{figure}[h!] 
\centering
\begin{tikzpicture} [scale=0.8]
\draw[thick] (0,0) node [left] {$x$} 
--(2,0) node [right] {$y$} 
--(2,2) node [right] {$v$} 
--(0,2) node [left] {$u$}
--(0,0)
--(-0.5,-1.93) node [left] {$w$}
--(1,-3) node [below] {$a$}
--(2.5,-1.93) node [right] {$z$}
--(2,0);
\end{tikzpicture}
\caption{Graph $G$ for Example \ref{ex: Kcalc}}
\label{fig:ollivierpic}
\end{figure}
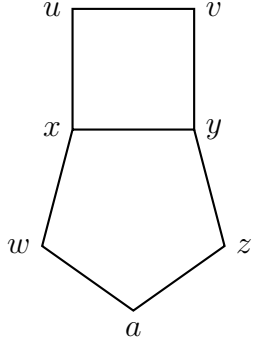

For $p\in[0,\frac{1}{4}]$ (i.e. $p\le \frac{1-p}{3}$), define $\pi_1:V\times V\rightarrow [0,1]$ to be
\begin{align*}
&\pi_1(u,v):=\frac{1-p}{3} &\pi_1(w,x):=\frac{1-p}{3}-p \\ &\pi_1(w,z):=p &\pi_1(y,z):=\frac{1-p}{3}-p
\end{align*}
and to take value zero everywhere else.

One can check that $\pi_1\in \prod(\mu_x^p,\mu_y^p)$ and that $$\textup{cost}(\pi_1)=1\Big(\frac{1-p}{3}\Big)+1\Big(\frac{1-p}{3}-p\Big)+2\Big(p\Big)+1\Big(\frac{1-p}{3}-p\Big)=1-p.$$

In the other case when $p\in[\frac{1}{4},1]$, define $\pi_2:V\times V\rightarrow [0,1]$ to be
\begin{align*}
\pi_2(u,v):=\frac{1-p}{3}\quad\quad \pi_2(x,y):=p-\frac{1-p}{3}\quad\quad  \pi_2(w,z):=\frac{1-p}{3}
\end{align*}
and to take value zero everywhere else.

Similarly, one can also check that $\pi_2\in \prod(\mu_x^p,\mu_y^p)$ and that $$\textup{cost}(\pi_2)=1\Big(\frac{1-p}{3}\Big)+1\Big(p-\frac{1-p}{3}\Big)+2\Big(\frac{1-p}{3}\Big)=\frac{2+p}{3}.$$

By definition of $W_1$, we obtain an upper bound 
\begin{align} \label{eqn: exW_1bound}
W_1(\mu_x^p,\mu_y^p) \le 
\begin{cases}
1-p &\textup{ if } p\in[0,\frac{1}{4}]\\
\frac{2+p}{3} &\textup{ if } p\in[\frac{1}{4},1]
\end{cases}
\end{align}

In order to prove that this upper bound is in fact an equality, we need candidates for optimal Kantorovich potentials. Construct
two $1$-Lipschitz functions $\Phi_1,\Phi_2: V \rightarrow \mathbb{R}$ by assigning their values as
\begin{align*}
\Phi_1(w)=2;\quad \Phi_1(u)=\Phi_1(x)=\Phi_1(y)=1; \quad \Phi_1(v)=\Phi_1(z)=0,  
\end{align*}
and
\begin{align*}
\Phi_2(u)=\Phi_2(w)=2;\quad \Phi_2(x)=\Phi_2(v)=1; \quad \Phi_2(y)=\Phi_2(z)=0,  
\end{align*}
which can be checked that they satisfy $1$-Lipschitz condition on the set of vertices \{x,y,u,v,w,z\}. It is also crucial to remark that one can always extend the $1$-Lipschitz condition onto all other vertices whose values have not yet been assigned. In this example, we (must) assign $\Phi_1(a)=1$ and $\Phi_2(a)=1$. We then obtain a $1$-Lipschitz $\Phi:V\rightarrow \mathbb{R}$, and therefore
\begin{align*}
\sum\limits_{\xi \in V} \Phi_1(\xi)\bigg(\mu_x^p(\xi)-\mu_y^p(\xi)\bigg)
&=2\Big(\frac{1-p}{3}\Big)+1\Big(\frac{1-p}{3}\Big)+1\Big(p-\frac{1-p}{3}\Big)+1\Big(\frac{1-p}{3}-p\Big)\\&=1-p,
\end{align*}
and
\begin{align*}
\sum\limits_{\xi \in V} \Phi_2(\xi)\bigg(\mu_x^p(\xi)-\mu_y^p(\xi)\bigg)&=2\Big(\frac{1-p}{3}\Big)+2\Big(\frac{1-p}{3}\Big)+1\Big(p-\frac{1-p}{3}\Big)+1\Big(0-\frac{1-p}{3}\Big)\\&=\frac{2+p}{3}.
\end{align*}

Therefore Duality Theorem gives $W_1(\mu_x^p,\mu_y^p)\ge \max \{1-p,\frac{2+p}{3}\}$. Together with the upper bound \eqref{eqn: exW_1bound}, we can then conclude that 
\begin{align*} 
W_1(\mu_x^p,\mu_y^p) =
\begin{cases}
1-p &\textup{ if } p\in[0,\frac{1}{4}]\\
\frac{2+p}{3} &\textup{ if } p\in[\frac{1}{4},1]
\end{cases}
\end{align*}
and that
\begin{align*}
K_p(x,y)=1-W_1(\mu_x^p,\mu_y^p) = 
\begin{cases}
p &\textup{ if } p\in[0,\frac{1}{4}]\\
\frac{1-p}{3} &\textup{ if } p\in[\frac{1}{4},1].
\end{cases}
\end{align*}
\end{example}

\section{Reformulation in Linear Optimization} \label{sect:LP}
Let $\mu=\mu_x^p$ and $\nu=\mu_y^p$. The Wasserstein distance $W_1(\mu,\nu)$ as defined in \eqref{W1} can be reformulated in a Linear Optimization (or Linear Programming) problem as follows. Let $x_1=x$ and $x_2,...,x_m$ be all neighbors of $x$. Similarly, let $y_1=y$ and $y_2,...,y_n$ be all neighbors of $y$. 

A transport plan $\pi\in \prod(\mu,\nu)$ may be represented as an $(mn)$-column vector $$\pi=\Big(\pi(x_1,y_1),....,\pi(x_m,y_n)\Big)^T,$$ since $\pi$ takes value zero at any other point $(z,w)\not=(x_i,y_j)$. Also, define the cost-function vector to be a constant $(nm)$-column vector $$d:=\Big(d(x_1,y_1),....,d(x_m,y_n)\Big)^T.$$
As in the definition, $$W_1(\mu,\nu)= \min\limits_{\pi\ge 0} \sum\limits_{i=1}^m \sum\limits_{j=1}^n d(x_i,y_j)\pi(x_i,y_j)$$ subjects to the constraints
\begin{align*}
\sum\limits_{j=1}^m \pi(x_i,y_j)=\mu(x_i) \textup{ for all } i\\
\sum\limits_{i=1}^n \pi(x_i,y_j)=\nu(y_j) \textup{ for all } j
\end{align*}

Concatenate $\mu(x_i)$'s and $\nu(y_j)$'s into a constant $(m+n)$-column vector: $$b=\Big(\mu(x_1),...,\mu(x_m),\nu(y_1),...,\nu(y_n)\Big)^T.$$ The above constraints can then be read as $A \pi = b$ for some constant matrix $A$ of dimension $(m+n)\times (mn)$ matrix $A$ whose entries are $0$'s and $1$'s.

Therefore $W_1(\mu,\nu)$ is a solution to the primal problem (P): $$\min\limits_{\pi\ge 0} d^T\pi \textup{ subjects to } A\pi=b,$$ and its dual problem (D) can be written as $$\max\limits_{\Phi\in\mathbb{R}^{n+m}} b^T\Phi \textup{ subjects to } A^T\Phi\le d,$$
where $\Phi\in\mathbb{R}^{n+m}$ represents the values of $\Phi(x_i)$'s and $\Phi(y_j)$'s, and the constraint $A^T\Phi\le d$ encodes $1$-Lipschitz condition of $\Phi$ among the vertices $x_i$'s and $y_j$'s (and the $1$-Lipschitz condition can then be extended among all vertices in $V$).

The strong duality theorem from Linear Programming then asserts that the solutions of (P) and (D) coincide, which is essentially the statement of Kantorovich Duality Theorem \ref{duality}.

One more crucial aspect from Linear Programming is the Complimentary Slackness theorem: if $\pi^*$ and $\Phi^*$ give optimal solutions to the above (P) and (D) respectively, then $\pi^{*T}(d-A^T\Phi^*)=0$. It can be written equivalently as follows.
\begin{theorem}[Complementary Slackness]
Let $\pi^*$ and $\Phi^*$ be an optimal transport plan and an optimal Kantorovich potential with respect to $W_1(\mu,\nu)$. For any $x_i,y_j$ such that $\pi^*(x_i,y_j)>0$, then $$\Phi^*(x_i)-\Phi^*(y_j)=d(x_i,y_j).$$
\end{theorem}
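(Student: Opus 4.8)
The plan is to extract from strong duality a single scalar identity and then exploit sign constraints to force the conclusion entry by entry. First I would recall the elementary weak-duality inequality that underlies Kantorovich duality: for any transport plan $\pi\in\prod(\mu,\nu)$ and any $\Phi\in\textup{1-Lip}$, the $1$-Lipschitz bound $\Phi(x)-\Phi(y)\le d(x,y)$, weighted by the nonnegative masses $\pi(x,y)\ge 0$, gives
\[
\sum_{x,y\in V}\big(\Phi(x)-\Phi(y)\big)\pi(x,y)\ \le\ \sum_{x,y\in V} d(x,y)\,\pi(x,y).
\]
Using the marginal conditions $\sum_y\pi(x,y)=\mu(x)$ and $\sum_x\pi(x,y)=\nu(y)$, the left-hand side telescopes to $\sum_{z\in V}\Phi(z)\big(\mu(z)-\nu(z)\big)$, which reproduces one direction of Theorem \ref{duality}.

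Next I would specialize to the optimizers $\pi^*$ and $\Phi^*$. By the Kantorovich Duality Theorem \ref{duality} (strong duality), the optimal transport cost and the optimal potential value coincide, so the displayed inequality becomes an \emph{equality} for the pair $(\pi^*,\Phi^*)$. Rearranging, this is precisely
\[
\sum_{x,y\in V}\Big(d(x,y)-\big(\Phi^*(x)-\Phi^*(y)\big)\Big)\,\pi^*(x,y)=0.
\]
The crucial observation is that the summand is a product of two nonnegative factors: $\pi^*(x,y)\ge 0$ because $\pi^*$ is a transport plan, and $d(x,y)-(\Phi^*(x)-\Phi^*(y))\ge 0$ because $\Phi^*$ is $1$-Lipschitz. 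A sum of nonnegative terms vanishes only if each term vanishes.

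Therefore, for every pair $x_i,y_j$ with $\pi^*(x_i,y_j)>0$ the corresponding factor must be zero, i.e. $\Phi^*(x_i)-\Phi^*(y_j)=d(x_i,y_j)$, which is exactly the claim. Equivalently, in the linear-programming formulation of Section \ref{sect:LP}, this is the complementary-slackness identity $\pi^{*T}(d-A^T\Phi^*)=0$, read through the identification $(A^T\Phi^*)_{(i,j)}=\Phi^*(x_i)-\Phi^*(y_j)$. The only real subtlety, and the step I would be most careful about, is the bookkeeping that converts strong duality into the single boxed equation: one must check that the telescoping of $\sum(\Phi^*(x)-\Phi^*(y))\pi^*(x,y)$ against the marginals is valid (it is, since all supports are finite) and that the sign conventions line up so that both factors are genuinely nonnegative. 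Once that sandwich is arranged correctly, the ``nonnegative sum equal to zero'' conclusion is immediate.
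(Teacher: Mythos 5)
Your proof is correct and complete. It is worth noting, though, that the paper itself does not actually prove this statement: it simply quotes the Complementary Slackness theorem from Linear Programming in the form $\pi^{*T}(d-A^{T}\Phi^{*})=0$ and restates it coordinate-wise, leaving the derivation as a black box. What you have written is a self-contained and elementary proof of exactly that black box, specialized to the transport setting: weak duality comes from weighting the $1$-Lipschitz bound $\Phi(x)-\Phi(y)\le d(x,y)$ by the nonnegative masses $\pi(x,y)$ and telescoping against the marginals, strong duality (Theorem \ref{duality}) turns the inequality into an equality at the optimizers, and the ``nonnegative sum equals zero'' argument finishes it. This buys you transparency and independence from the general LP machinery of Section \ref{sect:LP}; the only thing you still import is the Kantorovich Duality Theorem itself, which the paper also takes as given. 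Your identification $(A^{T}\Phi^{*})_{(i,j)}=\Phi^{*}(x_i)-\Phi^{*}(y_j)$ is consistent with the paper's encoding provided one keeps track of the sign flip on the $y_j$-coordinates of the dual vector (the paper's dual objective $b^{T}\Phi$ matches $\sum_z\Phi(z)(\mu(z)-\nu(z))$ only under that convention), but this is bookkeeping, not a gap. Your remark that finite supports justify the telescoping is exactly the right point to flag.
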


\section{Ollivier Bonnet-Myers}
In this section, we prove of Bonnet-Myers theorem in the sense of Ollivier's Ricci curvature, which is fairly straightforward (compared to the one in Riemannian geometry or the one in Bakry-\'Emery). In addition, we provide a proof of Lichnerowicz theorem, which is referred to the proof in \cite[Theorem 4.2]{LLY}.

First, we introduce an important lemma, which says that the curvatures between two neighbors give the lower bound for the curvature globally.

\begin{lemma} \label{globalk} Let $G=(V,E)$ be a graph, and $p\in [0,1)$. If $K_p(x,y) \ge K$ holds for all neighboring pairs $x\sim y$, then $K_p(x,y)\ge K$ for all $x,y\in V$.
\end{lemma}
\begin{proof}
Let $L=d(x,y)$ and $x=x_0\sim x_1 \sim...\sim x_L=v$ be a shortest path from $x$ to $y$. By the assumption, $W_1(\mu_{x_i}^p, \mu_{x_{i+1}}^p) =1-K_p(x_i,x_{i+1}) \ge 1-K$ for all $x_i\sim x_{i+1}$. The metric property on $W_1$ then gives
$$W_1(\mu_x^p, \mu_y^p) \le \sum\limits_{i=0}^{L-1} W_1(\mu_{x_i}^p, \mu_{x_{i+1}}^p) \le L(1-K)$$
\noindent and therefore $K_p(x,y) \ge K$.
\end{proof}

\begin{theorem} [O Bonnet-Myers] \label{thm: OBM}
Let $G=(V,E)$ be a connected graph, and $p\in [0,1)$. Assume $K_p(x,y) \ge K > 0$ for all $x\sim y$. Then $G$ is finite and $$\textup{diam } G \le \frac{2(1-p)}{K}$$
\end{theorem}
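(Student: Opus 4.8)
The plan is to use the curvature bound to control the Wasserstein distance between neighbouring measures, and then to sum this control along a shortest geodesic, exploiting the triangle inequality for $W_1$ (which we have already established). The key observation is that Lemma \ref{globalk} already upgrades the hypothesis ``$K_p(x,y)\ge K$ for all $x\sim y$'' to the global statement ``$K_p(x,y)\ge K$ for all $x,y\in V$'', so I may assume the curvature lower bound holds for every pair of vertices, not just neighbours.

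First I would take any two vertices $x,y\in V$ and let $L=d(x,y)$. By the definition of Ollivier's curvature, the global bound $K_p(x,y)\ge K$ translates directly into
$$W_1(\mu_x^p,\mu_y^p)=\big(1-K_p(x,y)\big)\,d(x,y)\le (1-K)L.$$
On the other hand, I would extract a second, lower estimate for $W_1(\mu_x^p,\mu_y^p)$ coming from the geometry of the two measures: since $\mu_x^p$ places mass $p$ at $x$ and $\mu_y^p$ places mass $p$ at $y$ (with $d(x,y)=L$), any transport plan must move a substantial amount of mass across the full distance $L$. The cleanest way to make this rigorous is via Kantorovich Duality (Theorem \ref{duality}): choosing the $1$-Lipschitz potential $\Phi(z):=d(y,z)$, one gets a lower bound
$$W_1(\mu_x^p,\mu_y^p)\ge \sum_{z\in V}\Phi(z)\big(\mu_x^p(z)-\mu_y^p(z)\big),$$
and evaluating this test function against the explicit masses of $\mu_x^p$ and $\mu_y^p$ produces a quantity that grows like $L$ minus a bounded correction, with the idleness $p$ appearing as a coefficient.

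Combining the upper bound $(1-K)L$ with this lower bound yields a linear inequality in $L$ whose only solutions satisfy $L\le 2(1-p)/K$. Since $x$ and $y$ were arbitrary, this bounds the diameter, and finiteness of $G$ then follows because a connected, locally-finite graph of bounded diameter has only finitely many vertices (each vertex lies within distance $\operatorname{diam}(G)$ of a fixed base point, and balls of finite radius in a locally-finite graph are finite).

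The step I expect to be the main obstacle is pinning down the exact constant in the lower bound for $W_1(\mu_x^p,\mu_y^p)$ so that the two estimates combine to give precisely $2(1-p)/K$ rather than a weaker bound. The crude potential $\Phi=d(y,\cdot)$ may need to be sharpened, or the analysis may instead require directly examining how much mass $\mu_x^p$ and $\mu_y^p$ share versus how much must be transported the full distance; getting the $(1-p)$ factor exactly right is where the care lies. Everything else — the triangle inequality, the duality bound, and the passage from bounded diameter to finiteness — is routine given the machinery developed earlier in the chapter.
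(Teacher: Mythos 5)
Your proposal is correct, and its overall architecture matches the paper's: upgrade the hypothesis to a global curvature bound via Lemma \ref{globalk}, deduce $W_1(\mu_x^p,\mu_y^p)\le(1-K)L$, pair this with a lower bound of the form $L-2(1-p)$, and solve the resulting linear inequality for $L$. Where you differ is in how the lower bound is obtained. The paper gets it for free from the metric property of $W_1$: it writes
$$L=W_1(\delta_x,\delta_y)\le W_1(\delta_x,\mu_x^p)+W_1(\mu_x^p,\mu_y^p)+W_1(\mu_y^p,\delta_y)$$
and observes that $W_1(\delta_x,\mu_x^p)=1-p$, since the only mass that moves is the $1-p$ unit pushed from $x$ to its neighbours, each at distance one. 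Your route goes through Kantorovich duality instead, and the worry you flag --- that the ``crude'' potential $\Phi=d(y,\cdot)$ might not give the sharp constant --- is unfounded: it works exactly. Indeed $\Phi$ is $1$-Lipschitz by the triangle inequality for $d$, and
$$\sum_{z\in V} d(y,z)\,\mu_x^p(z)\ \ge\ pL+(1-p)(L-1)\ =\ L-(1-p),$$
because every neighbour of $x$ lies at distance at least $L-1$ from $y$, while $\sum_{z\in V} d(y,z)\,\mu_y^p(z)=1-p$ exactly; so the duality test function yields $W_1(\mu_x^p,\mu_y^p)\ge L-2(1-p)$, and combining with the upper bound $(1-K)L$ gives $KL\le 2(1-p)$ precisely. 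The triangle-inequality route is slightly more economical (no sums to evaluate), but your duality route is equally rigorous and has the minor advantage of exhibiting an explicit $1$-Lipschitz witness for the lower bound. Your finiteness argument --- bounded diameter plus local finiteness implies finitely many vertices --- is the standard one and is fine, noting that local finiteness is part of the standing assumptions of this chapter.
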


\begin{proof}
Consider arbitrary vertices $x,y\in V$, and let $L=d(x,y)$. By Lemma \ref{globalk}, the assumption implies $W_1(\mu_x^p, \mu_y^p)\le L(1-K)$. Moreover, the metric $W_1$ gives
\begin{align*}
L=W_1(\delta_x, \delta_y) &\stackrel{\triangle}{\le}W_1(\delta_x, \mu_x^p)+W_1(\mu_x^p, \mu_y^p)+W_1(\mu_y^p, \delta_y)\\
&=2(1-p)+W_1(\mu_x^p, \mu_y^p)\\
&\le 2(1-p)+L(1-K)
\end{align*}
yielding $L\le \frac{2(1-p)}{K}$ as desired. Here we used the fact that $W_1(\delta_x,\mu^p_x)=1-p$ because a mass (of $1-p$ unit in total) is transported by one unit distance from $x$ to its neighbors.
\end{proof}

Note that in Lemma \ref{globalk} and Theorem \ref{thm: OBM}, the $p$-idleness curvature $K_p$ may be replaced Lin-Lu-Yau curvature $K_{LLY}$ (and the diameter of the graph $G$ is bounded by $\textup{diam } G \le \frac{2}{K})$. This is due to the relation \eqref{eqn: scaleLLY}.

\begin{theorem}[O Lichnerowicz]
Let $G=(V,E)$ be a finite connected graph. Assume there exists $K>0$ such that $K_{LLY}(x,y) \ge K$ for all $x\sim y$. Then the first nonzero eigenvalue $\lambda_1 \ge K$.
\end{theorem}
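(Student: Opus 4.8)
The plan is to exploit the duality between the Wasserstein distance and the Lipschitz action of the random walk operator, and then feed an eigenfunction into the resulting contraction estimate. Throughout I fix some idleness parameter $p\in[\tfrac12,1)$, so that relation \eqref{eqn: scaleLLY} applies and gives $K_p(x,y)=(1-p)K_{LLY}(x,y)$. First I would record how the transition operator $P$ (defined by $\mu_x^p=P\delta_x$) acts on functions: since $(Pf)(x)=pf(x)+\tfrac{1-p}{d_x}\sum_{z\sim x}f(z)$, a short computation with the discrete Laplacian shows $P=\textup{Id}+(1-p)\Delta$. Consequently, if $f$ is a real eigenfunction with $\Delta f=-\lambda_1 f$, then $Pf=\bigl(1-(1-p)\lambda_1\bigr)f$. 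On the curvature side, the hypothesis $K_{LLY}(x,y)\ge K$ for neighbours gives $K_p(x,y)\ge (1-p)K$ for neighbours, and Lemma \ref{globalk} (with the constant $(1-p)K$) upgrades this to all pairs, hence $W_1(\mu_x^p,\mu_y^p)=\bigl(1-K_p(x,y)\bigr)d(x,y)\le \bigl(1-(1-p)K\bigr)d(x,y)$ for all $x,y\in V$.

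Next I would forge the link to Lipschitz functions via Kantorovich Duality (Theorem \ref{duality}). For any $1$-Lipschitz $\Phi$ and any pair $x,y$, the duality inequality applied to the fixed competitor $\Phi$ yields $\sum_z\Phi(z)\bigl(\mu_x^p(z)-\mu_y^p(z)\bigr)\le W_1(\mu_x^p,\mu_y^p)$. The left-hand side is exactly $P\Phi(x)-P\Phi(y)$, so combining with the contraction bound above gives $P\Phi(x)-P\Phi(y)\le \bigl(1-(1-p)K\bigr)d(x,y)$. In words, $P$ sends $1$-Lipschitz functions to $\bigl(1-(1-p)K\bigr)$-Lipschitz functions; by homogeneity $P$ contracts the Lipschitz seminorm $\|\cdot\|_{\textup{Lip}}:=\sup_{u\neq v}\tfrac{|\,\cdot(u)-\cdot(v)\,|}{d(u,v)}$ by the factor $1-(1-p)K$.

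Finally I would apply this to the eigenfunction. Since $G$ is finite and connected and $\lambda_1\neq 0$, the function $f$ is non-constant, so $\|f\|_{\textup{Lip}}$ is finite and strictly positive and the supremum is attained at some pair $x,y$ with $f(x)>f(y)$; rescaling makes $f$ be $1$-Lipschitz with $f(x)-f(y)=d(x,y)$. Then $Pf(x)-Pf(y)=\bigl(1-(1-p)\lambda_1\bigr)d(x,y)$, while the previous paragraph bounds this by $\bigl(1-(1-p)K\bigr)d(x,y)$. Dividing by $d(x,y)>0$ gives $1-(1-p)\lambda_1\le 1-(1-p)K$, and since $1-p>0$ this rearranges to $\lambda_1\ge K$, as claimed. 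The main obstacle, conceptually, is the middle step: recognizing that the metric contraction expressed by the curvature lower bound is, through Kantorovich duality, exactly a contraction of the random-walk operator on Lipschitz functions, so that coupling it with the spectral identity $Pf=(1-(1-p)\lambda_1)f$ produces the eigenvalue bound; the remaining algebra and the normalization of $f$ are routine.
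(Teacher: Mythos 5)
Your proposal is correct and follows essentially the same route as the paper's proof: the averaging operator you call $P$ is the paper's $M_p$, the contraction of the Lipschitz seminorm via Kantorovich duality is exactly the paper's inequality \eqref{mp2}, and the comparison at the pair attaining the Lipschitz constant of the eigenfunction is the paper's final step. Your version of tracking signs (choosing $f(x)>f(y)$ rather than taking absolute values) is a minor cosmetic difference, not a different argument.
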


\begin{proof}
For any number $p \in [\frac{1}{2},1)$, $$\frac{K_p(x,y)}{1-p}=K_{LLY}(x,y) \ge K$$ 
holds for all $x\sim y$, and also holds for all $x,y\in V$, because of Lemma \ref{globalk} and relation \eqref{eqn: scaleLLY}. Consider an average operator $M_p:C(V)\rightarrow C(V)$ defined as

\begin{equation}
M_p(f)(x) := \sum\limits_{z\in V} \mu_x^p(z)f(z)
\end{equation}
or equivalently, 
\begin{align*}
M_p f(x) &= pf(x)+\sum\limits_{z:z\sim x} \frac{1-p}{d_x}f(z)\\
&= f(x) - (1-p)\Delta f(x)
\end{align*}

\noindent Let $f_1$ be an eigenfunction satisfying $\Delta f_1 = \lambda_1 f_1$. Hence 

\begin{equation} \label{mpf}
M_p f_1(x)=(1-(1-p)\lambda_1)f_1(x).
\end{equation}

\noindent Consider $$\ell := \max\limits_{x,y\in V} \frac{|f_1(x)-f_1(y)|}{d(x,y)},$$ attaining a maximum at $x=x_1$, $y=y_1$. Note that $\ell\not=0$; otherwise, $f_1$ is constant giving $\Delta f_1=0$.  We have

\begin{equation} \label{mp1}
\bigg|M_p\frac{f_1}{\ell}(x_1)-M_p\frac{f_1}{\ell}(y_1) \bigg| \stackrel{\eqref{mpf}}{=} |1-(1-p)\lambda_1|\cdot d(x_1,y_1)
\end{equation}
On the other hand, note that $\frac{f_1}{\ell}$ is $1$-Lipschitz function. Hence by Duality Theorem, we have
\begin{align} \label {mp2}
\bigg|M_p\frac{f_1}{\ell}(x)-M_p\frac{f_1}{\ell}(y) \bigg| &=\bigg|\sum\limits_{z\in V} \frac{f(z)}{\ell}\big(\mu_x^p(z)-\mu_y^p(z)\big)\bigg| \nonumber \\
&\stackrel{\ref{duality}}{\le} W_1(\mu_x^p,\mu_y^p)\nonumber \\
&=\big(1-K_p(x,y)\big)\cdot d(x,y) \nonumber \\
&\le \big(1-(1-p)K\big)\cdot d(x,y)
\end{align} \noindent holds for all $x,y$. After substituting $x=x_1$, $y=y_1$  into inequality \eqref{mp2} and comparing to equation \eqref{mp1}, we finally obtain $\lambda_1\ge K$.
\end{proof}

\chapter*{Acknowledgements}
\thispagestyle{empty}

I would like to express my deepest appreciation to my supervisor, Professor Norbert Peyerimhoff, who invites me to this field of research and gives suggestions throughout my dissertation. I would also like to thank David Cushing for his intelligible introduction lectures of the discrete curvature notions, as well as his creative research ideas in this topic.\\

I would like to thank Department of Mathematical Sciences, Durham University, and especially Dr. Wilhelm Klingenberg for lectures that prepare me with good background knowledge in Riemannian geometry.\\

Lastly, I would like to thank my family and friends who always support me, and Royal Thai government who provides a scholarship for my current study.  

\clearpage
%\printbibliography

%\chapter*{Assertion of Authorship}
%\thispagestyle{empty}
%This dissertation is my own work, except where explicitly acknowledged by the giving of a reference.\\\\\\Name: SUPANAT KAMTUE
%\\\\\\Signature and date: 
\end{document}